\theoremstyle{plain}
\newtheorem{Thm}{Theorem}[section]
\newtheorem{Cor}[Thm]{Corollary}
\newtheorem{Lem}[Thm]{Lemma}
\newtheorem{Prop}[Thm]{Proposition}
\newtheorem{OP}[Thm]{Open Problem}
\theoremstyle{definition}
\newtheorem{Def}[Thm]{Definition}
\theoremstyle{remark}
\newtheorem{Rem}[Thm]{Remark}
\newcommand{\C}{\mathbb{C}}
\newcommand{\R}{\mathbb{R}}
\newcommand{\N}{\mathbb{N}}
\newcommand{\Aut}{\mathrm{Aut}}
\newcommand{\SAut}{\mathrm{SAut}}
\newcommand{\End}{\mathrm{End}}
\newcommand{\U}{\mathrm{U}}
\renewcommand{\O}{\mathcal{O}}
\newcommand{\bsr}{\mathrm{bsr}}
\numberwithin{equation}{section}
\title[Parametric Factorization of Matrices]%
{Parametric Factorization of Matrices}
\author{Gaofeng Huang \and Frank Kutzschebauch}
\address{Mathematisches Institut\\
Universit\"at Bern\\
Sidlerstrasse 5, CH--3012 Bern, Switzerland}
\email{gaofeng.huang@unibe.ch}
\email{frank.kutzschebauch@unibe.ch}
\thanks{The research was partially supported by Schweizerische Nationalfonds Grant 200021-178730.}
\subjclass[2020]{Primary 32Q56; Secondary 19B14}
\keywords{Oka principle, Stein spaces, vector bundle automorphisms, unipotent factorization, rings of holomorphic functions}
\begin{document}

\maketitle

\begin{abstract} In this survey paper we study  parametric versions of writing a  matrix in $SL_n (\mathbb{C})$ as a product of lower and upper unitriangular matrices in interchanging order as well as generalizations to other classical groups. We give an account of algebraic, continuous and holomorphic factorization results, from the standpoint of Several Complex Variables. Out of the wealth of algebraic results, we only concentrate on those which are related to holomorphic factorization and often formulate them in a specific form, e.g.\@ for the field of complex numbers in place of more general fields or principal ideal domains. 
The number of unitriangular matrices needed is a difficult problem and 
is solved in very specific  cases only. We give a new lower bound for
factorizing matrices in $SL_2 (\mathbb{C})$ continuously parametrized by normal topological spaces of dimension bigger than one.

\end{abstract}

\section{Introduction}

The well-known LULU decomposition states that every matrix $M= (M_{ij})_{ i,j =1}^n$ in $SL_n(k)$, where $k$ is an arbitrary field, can be written as a product of $4$ unitriangular matrices
\begin{align*}
    M=\begin{pmatrix}
    1 & 0 & \cdots & 0 \\
    * & 1 & \ddots & \vdots \\
    \vdots & \ddots & \ddots & 0 \\
    * & \cdots & * & 1
    \end{pmatrix}
    \begin{pmatrix}
    1 & * & \cdots & * \\
    0 & 1 & \ddots & \vdots \\
    \vdots & \ddots & \ddots & *\\
    0 & \cdots & 0 & 1
    \end{pmatrix}
    \begin{pmatrix}
    1 & 0 & \cdots & 0 \\
    * & 1 & \ddots & \vdots \\
    \vdots & \ddots & \ddots & 0 \\
    * & \cdots & * & 1
    \end{pmatrix}
    \begin{pmatrix}
    1 & * & \cdots & * \\
    0 & 1 & \ddots & \vdots \\
    \vdots & \ddots & \ddots & *\\
    0 & \cdots & 0 & 1
    \end{pmatrix}.
\end{align*}

It is natural to consider algebraic dependence on the parameter, which means that the entries $M_{ij}$ of $M$ are polynomial functions on an affine algebraic $k$-variety $k[X]$ and we look for a unitriangular factorization such that the off-diagonal entries of the factors are elements of $k[X]$ as well. In this generality it is necessary to ask for a factorization of $M \in SL_n(k[X])$ into finitely many (possibly more than $4$) unitriangular factors. One can  of course not only consider  algebraic dependence on a parameter. Our main interest is holomorphic dependence on parameters, therefore  we restrict ourselves to the field of complex numbers $k = \C$. Moreover in our case of main interest  the entries $M_{ij}$'s are holomorphic functions on a Stein space $X$, i.e.,\@ elements of $\mathcal{O}(X)$.   The main tool for solving holomorphic factorization problems turns out to be the Oka principle. Informally speaking an Oka principle holds if one can glue local holomorphic solutions to a global holomorphic solution in the presence of a continuous solution. This naturally leads us to consider continuous dependence on the parameter. Now the entries $M_{ij}$'s are complex-valued continuous functions on a topological space $T$, i.e., elements of $\mathcal{C}(T)$. 

The unifying concept of the above mentioned dependences is to consider the special linear group $SL_n(R)$ over a unital commutative ring $R$ (in our specific cases $R$ was the ring of polynomial functions on an affine variety, the ring of holomorphic functions on a Stein space, and the ring of continuous functions on a topological space resp.\@). Finite products of unitriangular matrices over $R$ form the so-called elementary subgroup $E_n(R)$ and our factorization problem is the same as asking whether $E_n(R) = SL_n(R)$. 

Instead of $SL_n$ one can consider other classical groups. For a semisimple Chevalley group G(R) over a unital commutative ring $R$, the factorization problem asks whether $E(R)$ coincides with $ G(R)$, where $E(R)$ denotes the elementary Chevalley group corresponding to $G(R)$. In fact, results in this generality exist mainly in the setting of algebraic parametric dependence or are consequences of purely K-theoretic properties, see Section \ref{subsec:bsr1}, \ref{subsec:banachalg} and Section \ref{sec: alg}. 
While for the continuous and holomorphic cases where the ring $R$ is $\mathcal{C}(T)$ and $\mathcal{O}(X)$ resp., the only factorization results which do not follow from K-theoretic considerations known until now have been obtained for the special linear groups $SL_n$ and the symplectic groups $Sp_{2n}$.

The first results on the holomorphic factorization date back to about 15 years ago. Although people were aware of the problem earlier, it was explicitly asked by Gromov in 1989. The algebraic factorization problem is part of algebraic K-theory, which was initiated in the late 1950s. Important notions like the Bass stable rank were introduced. The first celebrated factorization result was achieved by Suslin in 1977, as a byproduct of his solution to the Serre conjecture. Modern algebraic results connect algebraic factorization to motivic homotopy theory developed by Morel and Voevodsky. The problem of continuous factorization and the connection to algebraic K-theory had been promoted by Vaserstein since the 1960s. This led him to introducing the topological stable rank and he finally solved the problem in 1988.

In Section \ref{sec: general} we present some results which depend on general algebraic or topological properties of the ring $R$. We give examples of polynomial rings or function rings with these properties, thus showing the relevance of the presented results for the question of parametric factorization. On the other hand, for continuous or holomorphic dependence of parameter, many function rings  do not satisfy these properties. Instead, one should impose the nullhomotopy condition. We discuss the role of nullhomotopy in parametric factorization for continuous, holomorphic and even algebraic dependence in Section \ref{sec: nullhom}. Next, in Section \ref{sec: alg} we give specific results over polynomial rings. In Section \ref{sec: cont} we collect all known results on rings of continuous complex-valued functions. Then in Section \ref{sec :holo} we give known factorizations for rings of holomorphic functions on Stein spaces. When a factorization into unitriangular factors exists, it is natural to ask about the number of factors needed. Finding the optimal number is generally a difficult task. We comment on this for all results from Section \ref{sec: general} to \ref{sec :holo}. Section \ref{sec: VB} is devoted to a generalization of parametric factorization into another direction, the setting of vector bundle automorphisms. We are not aware of any general formulation of the vector bundle setting in terms of algebraic K-theory. In the last section we propose some future development and open problems.

\section{General results} \label{sec: general}
The aim of this section is to present results which do focus on properties of the ring $R$. The Euclidean property
can be applied for polynomial parameter dependence and the results in the other two subsections can be both applied for the continuous and in the holomorphic parameter dependence. We will exemplify this.

\subsection{Euclidean rings} 

It is folklore in $K$-theory that every matrix in $SL_n$ for $n \ge 2$ over a Euclidean ring $R$ can be written as a product of finitely many unitriangular factors. Let us sketch the idea by looking at the case of $SL_2$. Let $ A = \begin{pmatrix}   a & * \\ b & *
    \end{pmatrix}$ be an element of $SL_2(R)$. The ideal generated by the two elements $a, b$ in the first column is the entire ring by virtue of the determinant condition. Let $f \colon R \setminus \{0\} \to \N$ be the Euclidean function for $R$. Then there exist $q_1, r_1 \in R$  such that $a = q_1 b + r_1$ and $f(r_1) < f(b)$. Multiplying $A$ from the left by $\begin{pmatrix}   1 & -q_1 \\ 0 & 1
    \end{pmatrix}$ gives $\begin{pmatrix}   r_1 & * \\ b & *
    \end{pmatrix}$. Division of $b$ by $r_1$ yields $b = q_2 r_1 + r_2$ for $q_2, r_2 \in R$ and $f(r_2)<f(r_1)$. Another left multiplication with $\begin{pmatrix}   1 & 0 \\ -q_2 & 1
    \end{pmatrix}$ gives $\begin{pmatrix}   r_1 & * \\ r_2 & *
    \end{pmatrix}$. After finitely many steps we get $\begin{pmatrix}   \alpha & * \\ \beta & *
    \end{pmatrix}$ where one of $\alpha, \beta$, say $\alpha$, is the greatest common divisor of $a, b$. Since the ideal $(a, b) = (\alpha) = R$, we deduce that $\alpha$ is invertible. Then it is easy to transform the first column to $\begin{pmatrix}   1  \\ 0 
    \end{pmatrix}$ by elementary row operations. By the determinant condition the matrix is now of the form $\begin{pmatrix}   1 & * \\ 0 & 1 
    \end{pmatrix}$.
Since $\C[z]$ is a Euclidean ring, 
this shows that every matrix in $SL_n(\C[z])$, i.e., a matrix depending polynomially on one variable, is a product of finitely many unitriangular factors depending polynomially on that variable.

There is a generalization of this process by
Cohn \cite{MR0207856}*{Theorem 7.2} to the so-called weak Euclidean rings \cite{MR0153696}*{\S 2}. However, we don't know any application of this more general result for parametric dependence.

\subsection{Bass stable rank 1} \label{subsec:bsr1}

\begin{Def}
Let $R$ be a  unital commutative ring. An element
$(x_1, \dots, x_k) \in R^k$ is called \textsl{unimodular} if
\[
\sum_{j=1}^k x_j R =R.
\]
Let $U_k(R)$ the set of all unimodular elements in $R^k$.

An element $x=(x_1, \dots, x_{k+1}) \in U_{k+1}(R)$ is called \textsl{reducible}
if there exists $(y_1, \dots, y_k)\in R^k$ such that
\[
(x_1 + y_1 x_{k+1}, \dots, x_k + y_k x_{k+1}) \in U_k (R).
\]
The \textsl{Bass stable rank} of $R$, denoted by $\bsr(R)$  is the least $k\in\N$ such that every $x\in U_{k+1} (R)$ is reducible. If there is no such $k\in\N$,
then we set $\bsr(R) =\infty$.
\end{Def}

\begin{Rem}
 \label{r_bsr1}
The identity $\bsr(R) =1$ is equivalent to the following property:
For any $x_1, x_2\in R$ such that $x_1R + x_2R =R$,
there exists $y\in R$ such that $x_1 + y x_2 \in R^*$.
\end{Rem}

The Bass stable rank for algebras of holomorphic functions on finite dimensional Stein spaces was calculated by Alexander Brudnyi. Recall that the dimension of a Stein space is simply the dimension of its smooth part as a complex manifold.  
\begin{Thm}[Brudnyi \cite{MR3898325}*{Theorem 1.1}] \label{Bru-bsr}
Let $(X,\mathcal O_X)$ be a finite dimensional Stein space.  Then 
\[
\bsr (\mathcal O(X)) = \left\lfloor \frac 1 2\, {\rm dim}\, X \right\rfloor +1.
\]
\end{Thm}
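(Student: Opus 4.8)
The plan is to establish the two inequalities $\bsr(\mathcal O(X)) \le \lfloor \tfrac12 \dim X\rfloor + 1$ and $\bsr(\mathcal O(X)) \ge \lfloor \tfrac12 \dim X\rfloor + 1$ separately, the upper bound by an Oka-principle reduction and the lower bound by exhibiting a non-reducible unimodular row on a concrete Stein domain.

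For the upper bound, set $n = \dim X$ and $k = \lfloor n/2\rfloor + 1$; I would take a unimodular $(k+1)$-tuple $(f_0, f_1, \dots, f_k) \in U_{k+1}(\mathcal O(X))$ and seek $g_1, \dots, g_k \in \mathcal O(X)$ making $(f_1 + g_1 f_0, \dots, f_k + g_k f_0)$ unimodular. The common zero set $Z = \{f_1 = \dots = f_k = 0\}$ is an analytic subset of $X$, and unimodularity of the shifted tuple is equivalent to $f_0$ being nowhere zero on $Z$, i.e. to finding a section of a suitable holomorphic fiber bundle avoiding the "bad" locus. The standard move is to interpret the $g_i$'s as a section of an affine bundle over $X$ whose fibers are complements of an analytic set of codimension $\ge k$ in $\C^k$; since $\C^k$ minus such a set is $(k-1)$-connected and one needs to push a $\dim X = n \le 2k-2 < 2(k-1)+1$-dimensional obstruction through it, general position (transversality on the smooth part, handled stratum by stratum on the Stein space) produces a continuous section, and then Gromov–Oka / Forstnerič's theory upgrades it to a holomorphic one because the fiber is an Oka manifold (it is the complement of an analytic subset of codimension $\ge 2$ in affine space, hence Oka, or one argues directly via Cartan's Theorem B for the linear shift structure). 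This yields reducibility, so $\bsr(\mathcal O(X)) \le k$.

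For the lower bound I would produce, for each $n$, an $n$-dimensional Stein space (in fact a smooth Stein manifold, e.g. an affine algebraic one such as a smooth affine quadric or simply a carefully chosen domain in $\C^n$) carrying a unimodular $k$-tuple that is not reducible, where $k = \lfloor n/2\rfloor + 1$. The natural candidate is built from coordinate functions on the smooth complex affine quadric $Q_{n}=\{z_1^2+\dots+z_{n+1}^2=1\}$ or, more classically, using the tautological unimodular row on a sphere-like Stein manifold whose underlying topology carries a nontrivial class detecting the obstruction in $\pi_{2k-2}$ of the relevant fiber; reducibility would force that obstruction class to vanish, contradicting a cohomological computation (a nonzero Euler class / a nonvanishing element in reduced $K$-theory or in $H^{2k-2}$ of the space). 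Concretely one checks that reducing the row amounts to splitting off a trivial line subbundle from a rank-$k$ bundle whose top obstruction is nonzero on an $(2k-2)$-dimensional cell.

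The main obstacle is the lower bound: one must exhibit an explicit finite-dimensional Stein space together with a unimodular tuple whose non-reducibility can be certified, which requires identifying the correct topological invariant (a nonvanishing obstruction class in the appropriate cohomology or homotopy group of a complement $\C^k \setminus (\text{codim }k)$) and verifying it does not die after passing from the continuous to the holomorphic category — here one uses that for Stein spaces the holomorphic and topological classifications of the relevant bundles agree by the Oka–Grauert principle, so no information is lost. The upper-bound argument, by contrast, is a fairly standard transversality-plus-Oka package; the delicate point there is only the bookkeeping of stratifying the singular Stein space $X$ and controlling dimensions stratum by stratum, which is routine given Brudnyi's framework.
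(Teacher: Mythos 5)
This statement is quoted in the survey from Brudnyi (\cite{MR3898325}, Theorem 1.1); the paper contains no proof of it, so your proposal can only be judged on its own terms, and as it stands it has two genuine gaps. First, in the upper bound your quantitative mechanism is wrong. The reduction itself is also misstated: unimodularity of the shifted tuple is not ``$f_0$ nowhere zero on $Z$'' (that is automatic from unimodularity of the original tuple); the actual condition is that the graph of $(g_1,\dots,g_k)$ avoids the analytic set $A=\{(x,w): f_i(x)+w_if_0(x)=0,\ i=1,\dots,k\}\subset X\times\C^k$, whose fibres over $\{f_0\neq0\}$ are single points. More importantly, the ``general position / transversality stratum by stratum'' step counts \emph{real} dimensions: the graph of a section has real dimension $2n$ and $A$ has real dimension $2n$ inside real dimension $2(n+k)$, so genericity only lets you avoid $A$ when $k>n$, i.e.\ it proves $\bsr(\mathcal O(X))\le \dim X+1$, not $\lfloor\tfrac12\dim X\rfloor+1$. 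The factor-of-two improvement is exactly where the nontrivial input enters: a Stein space of complex dimension $n$ has the homotopy type of a CW complex of real dimension at most $n$ (Andreotti--Frankel for manifolds, Hamm in the singular case), and then obstruction theory with the $(2k-2)$-connected fibre $\C^k$ minus a point (note: $(2k-2)$-connected, not $(k-1)$-connected, and $2k-2=n-1$ when $n$ is odd, so your inequality $n\le 2k-2$ fails there) produces the continuous section, which an Oka principle upgrades to a holomorphic one. Without the homotopy-dimension theorem your argument cannot reach the stated bound.

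Second, your lower bound does not prove what the theorem asserts. Exhibiting, for each $n$, one $n$-dimensional Stein manifold (an affine quadric, say) with a nonreducible unimodular $k$-tuple detected by a nonvanishing Euler-type class only shows that the upper bound is attained by \emph{some} space of each dimension; the theorem claims $\bsr(\mathcal O(X))=\lfloor\tfrac12\dim X\rfloor+1$ for \emph{every} finite-dimensional Stein space, in particular for contractible ones such as $\C^n$, where every global cohomological obstruction of the kind you invoke vanishes identically. There the nonreducibility is detected on analytic subsets rather than on $X$ itself: for instance on $\C^2$ the pair $(z,\,zw-1)$ is unimodular, and if $z+g\,(zw-1)$ were nowhere vanishing it would have a global logarithm on the simply connected space $\C^2$, hence $z$ would have a logarithm on $\{zw=1\}\cong\C^\ast$, a contradiction. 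An argument of this restriction-to-analytic-subsets type (carried out at a point where the local dimension is $n$) is what a correct lower bound for arbitrary $X$ requires, and it is absent from your plan.
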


\begin{Cor} \label{bsr1=dim1}
    The Bass stable rank for an algebra of holomorphic functions 
     on Stein space    is equal to one if and only if the dimension of the Stein space is one, i.e., in the case of open Riemann surfaces with possible singularities.
\end{Cor}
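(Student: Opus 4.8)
The plan is to obtain the corollary as a one-line consequence of Brudnyi's computation in Theorem~\ref{Bru-bsr}. Writing $d = \dim X$, that theorem gives $\bsr(\O(X)) = \lfloor d/2 \rfloor + 1$, so $\bsr(\O(X)) = 1$ holds if and only if $\lfloor d/2 \rfloor = 0$, i.e.\ if and only if $0 \le d/2 < 1$, which is to say $d \le 1$. Thus, among finite dimensional Stein spaces, Bass stable rank one is characterized by the dimension being at most one.

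Next I would match $d \le 1$ with the geometric statement. Recall (as noted just before Theorem~\ref{Bru-bsr}) that $d$ is by definition the dimension of the smooth part of $X$ as a complex manifold, so ``dimension one'' is unambiguous. When $d = 1$, the smooth locus $X_{\mathrm{reg}}$ is a one-dimensional complex manifold which is non-compact because $X$ is Stein, hence an open Riemann surface, while the singular locus $X_{\mathrm{sing}}$ is an analytic subset of dimension zero, hence a discrete, at most countable set of points; this is exactly the meaning of ``open Riemann surface with possible singularities''. The remaining case $d = 0$ is degenerate: there $X$ is an at most countable discrete set of points and $\O(X)$ is a (countable) product of copies of $\C$, which has Bass stable rank one. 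It is most natural either to exclude it by a standing assumption that $X$ be positive-dimensional, or to regard it as a trivial instance of the one-dimensional picture, and I would insert a sentence to this effect.

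There is essentially no obstacle here: the substantive input, namely the value of $\bsr(\O(X))$, is precisely the content of Theorem~\ref{Bru-bsr}, which we take as given, and everything else is the arithmetic of the floor function together with the elementary structure theory of low-dimensional Stein spaces. The only point requiring a little care is the bookkeeping for $d = 0$ described above.
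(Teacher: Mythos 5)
Your argument is correct and is exactly the paper's route: the corollary is stated as an immediate consequence of Brudnyi's formula $\bsr(\O(X)) = \lfloor \tfrac{1}{2}\dim X \rfloor + 1$ from Theorem~\ref{Bru-bsr}, with $\bsr(\O(X)) = 1$ precisely when $\dim X \le 1$. Your additional bookkeeping for the degenerate zero-dimensional case is a reasonable point that the paper leaves implicit.
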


The following LULU decomposition of $SL_2(R)$ when $\bsr (R) =1$ is well known in algebraic $K$-theory. For a proof see Nikola\u{\i} Vavilov, Andrei Smolenski\u{\i} and Balasubramanian Sury \cite{MR2822515}*{Lemma 1}. 

\begin{Lem} \label{SL2-4-factor}
    Let\/ $R$ be a unital commutative ring of Bass stable rank one. Then
    $$ SL_2(R)=U^-(A_1,R) U(A_1,R) U^-(A_1,R) U(A_1,R), $$
    where $A_1$ denotes the rank one reduced irreducible root system of type $A$ and $U(A_1,R), U^-(A_1, R)$ are the upper and lower unitriangular matrices, respectively. 
\end{Lem}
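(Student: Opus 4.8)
The plan is to mimic the Euclidean-ring argument sketched above, but with the reduction step for the first column powered by the hypothesis $\bsr(R)=1$ instead of by a Euclidean function. Start with an arbitrary $A=\begin{pmatrix} a & b \\ c & d\end{pmatrix}\in SL_2(R)$. The determinant relation $ad-bc=1$ shows that $(a,c)$ is unimodular, so by Remark \ref{r_bsr1} there exists $y\in R$ with $a+yc\in R^*$. Left multiplication by the lower unitriangular matrix $\begin{pmatrix} 1 & 0 \\ y & 1\end{pmatrix}$ does not change the first column's lower entry but replaces $a$ by the unit $u:=a+yc$. So after one lower-unitriangular factor we may assume the top-left entry is a unit $u$.

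Next I would clear the bottom-left entry: right... wait, to stay in the elementary group I should use left multiplications, or equivalently absorb factors on the correct side. Concretely, with top-left entry a unit $u$, left-multiply by $\begin{pmatrix} 1 & 0 \\ -cu^{-1} & 1\end{pmatrix}$ to kill $c$, obtaining $\begin{pmatrix} u & b' \\ 0 & d'\end{pmatrix}$; the determinant forces $ud'=1$, so $d'=u^{-1}$. Then $\begin{pmatrix} u & * \\ 0 & u^{-1}\end{pmatrix}$ must be written in terms of unitriangular factors. The standard identity
\[
\begin{pmatrix} u & 0 \\ 0 & u^{-1}\end{pmatrix}
=\begin{pmatrix} 1 & u \\ 0 & 1\end{pmatrix}
\begin{pmatrix} 1 & 0 \\ -u^{-1} & 1\end{pmatrix}
\begin{pmatrix} 1 & u \\ 0 & 1\end{pmatrix}
\begin{pmatrix} 0 & -1 \\ 1 & 0\end{pmatrix}
\]
together with the fact that $\begin{pmatrix} 0 & -1 \\ 1 & 0\end{pmatrix}$ is itself a short product of unitriangular matrices, shows every diagonal unit matrix — hence every upper-triangular matrix with unit diagonal, after a further upper-unitriangular factor — lies in $E_2(R)$. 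Collecting the factors shows $SL_2(R)=E_2(R)$; a careful bookkeeping of which factors are upper and which are lower, merging adjacent factors of the same type, yields exactly the alternating pattern $U^-UU^-U$ with four factors.

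The main obstacle is not the existence of a finite unitriangular factorization — that follows fairly directly from $\bsr(R)=1$ as above — but pinning down that \emph{four} factors in the precise alternating order $U^-(A_1,R)\,U(A_1,R)\,U^-(A_1,R)\,U(A_1,R)$ always suffice. One has to track the word produced by the reduction algorithm, use that a product of two upper (resp.\ lower) unitriangular matrices is again upper (resp.\ lower) unitriangular to collapse consecutive same-type factors, and handle the Weyl element $w=\begin{pmatrix} 0 & -1 \\ 1 & 0\end{pmatrix}$ and the diagonal matrix $\operatorname{diag}(u,u^{-1})$ within the budget. I would organize this as an explicit normal-form computation: reduce to $\operatorname{diag}(u,u^{-1})$ times an upper-unitriangular factor, express $\operatorname{diag}(u,u^{-1})$ via the identity above, and verify the resulting word rearranges to length four. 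For the full details we refer to Vavilov, Smolenski\u{\i} and Sury \cite{MR2822515}*{Lemma 1}, but the argument is exactly this elementary reduction driven by Remark \ref{r_bsr1}.
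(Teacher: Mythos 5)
Your reduction does prove, with $\bsr(R)=1$, that $SL_2(R)=E_2(R)$, but it does not prove the lemma, whose entire content is the exact pattern $U^-(A_1,R)\,U(A_1,R)\,U^-(A_1,R)\,U(A_1,R)$ of length four. Two problems. First, a computational slip: left multiplication by $\begin{pmatrix}1&0\\ y&1\end{pmatrix}$ adds $y$ times the first row to the second, so it replaces $c$ by $c+ya$ and leaves $a$ untouched; it does not produce the unit $a+yc$ in the upper-left corner (for that you would need an upper unitriangular factor on the left). Second, and decisively, the route through $\operatorname{diag}(u,u^{-1})$ and the Weyl element cannot be "bookkept" down to four factors. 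Your word is $U\cdot L\cdot D\cdot U$ with $D=U\,L\,U\,w$ and $w$ itself equal to $U\,L\,U$ (or $L\,U\,L$); after merging adjacent factors of the same type you are left with an alternating word such as $U\,L\,U\,L\,U\,L\,U$ of length at least seven, and since consecutive letters already alternate, no further merging is possible. The claim that careful rearrangement yields exactly $U^-UU^-U$ is therefore unsubstantiated, and deferring it to Vavilov--Smolenski\u{\i}--Sury means the actual statement (four factors, in that order) is not proved.

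The correct argument avoids the diagonal matrix and the Weyl element altogether and uses $\bsr(R)=1$ in a more targeted way: make the \emph{lower-left} entry a unit with a single lower factor, then use that an $SL_2$ matrix with invertible lower-left entry is a product of only three unitriangular factors. Concretely, for $A=\begin{pmatrix}a&b\\ c&d\end{pmatrix}$ the column $(a,c)$ is unimodular, so by Remark \ref{r_bsr1} there is $g_1$ with $c':=c-g_1a\in R^*$; then $\begin{pmatrix}1&0\\ -g_1&1\end{pmatrix}A=\begin{pmatrix}a&b\\ c'&d'\end{pmatrix}$ with $d'=d-g_1b$, and
\[
\begin{pmatrix}a&b\\ c'&d'\end{pmatrix}
=\begin{pmatrix}1&(a-1)c'^{-1}\\ 0&1\end{pmatrix}
\begin{pmatrix}1&0\\ c'&1\end{pmatrix}
\begin{pmatrix}1&(d'-1)c'^{-1}\\ 0&1\end{pmatrix},
\]
since the diagonal and lower-left entries match by construction and the $(1,2)$ entry is then forced by the determinant ($a(d'-1)c'^{-1}+(a-1)c'^{-1}=(ad'-1)c'^{-1}=b$). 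Hence $A\in U^-UU^-U$. Equivalently, in the notation of the four-factor computation in Section 5: choose $g_1$ so that $g_3=c-ag_1$ is invertible and then solve $(d-bg_1)-g_4g_3=1$ for $g_4$, after which the determinant forces the middle to be $U(g_2)L(g_3)$. This is why four factors suffice; your construction as written only gives seven.
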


\begin{Thm}[Michael Stein \cite{MR0528869}*{Theorem 2.2}] \label{Stein}
    Let $R$ be a unital commutative ring with Bass stable rank $\bsr(R) = m \ge 1$. Then
    \begin{enumerate}
        \item the map $SL_n(R)/E_n(R) \to SL_{n+1}(R)/E_{n+1}(R)$ induced by the canonical embedding of $SL_n(R)$ into $SL_{n+1}(R)$ is surjective for $n \ge m-1$;
        \item the map $Sp_{2n}(R)/Ep_{2n}(R) \to Sp_{2n+2}(R)/Ep_{2n+2}(R)$ induced by the canonical embedding of $Sp_{2n}(R)$ into $Sp_{2n+2}(R)$ is surjective for $n \ge \lfloor \frac{m}{2} \rfloor$.
    \end{enumerate}
\end{Thm}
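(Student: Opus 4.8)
The plan is to prove both statements simultaneously by the standard surjective-stability argument from algebraic $K$-theory, which rests on manipulating the first column (or, in the symplectic case, a hyperbolic pair) of a given matrix via elementary operations until it is brought into the image of the smaller group. For part (1), given $A \in SL_{n+1}(R)$ with $n \ge m-1$, write its first column as a vector $v = (v_1, \dots, v_{n+1})^t$, which is unimodular because $\det A = 1$. Since $\bsr(R) = m$ and $n+1 \ge m+1$, unimodularity of $v$ means the truncated tuple is reducible: there exist $y_1, \dots, y_n \in R$ such that $(v_1 + y_1 v_{n+1}, \dots, v_n + y_n v_{n+1})$ is unimodular in $R^n$. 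First I would left-multiply $A$ by the elementary matrix realizing these additions $v_i \mapsto v_i + y_i v_{n+1}$ (for $i = 1, \dots, n$), so that the first $n$ entries of the new first column form a unimodular vector in $R^n$. Then, using further elementary operations, I would clear the last entry $v_{n+1}$ by subtracting suitable multiples of the first $n$ rows (this uses precisely that those $n$ entries generate the unit ideal), arriving at a matrix whose first column is $(w_1, \dots, w_n, 0)^t$. One more elementary reduction normalizes the $(n+1)$-st row and column so that the matrix lies in the image of $SL_n(R) \hookrightarrow SL_{n+1}(R)$ up to a factor in $E_{n+1}(R)$; this exhibits the class of $A$ in $SL_{n+1}(R)/E_{n+1}(R)$ as coming from $SL_n(R)/E_n(R)$, proving surjectivity.

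For part (2), the argument is the symplectic analogue. Given $A \in Sp_{2n+2}(R)$ with $n \ge \lfloor m/2 \rfloor$, I would consider the image under $A$ of a fixed basis vector of the last hyperbolic plane; this gives a unimodular column of length $2n+2$. The key difference is that the elementary symplectic operations available are more constrained — one may only perform additions compatible with the symplectic form — so the reduction proceeds by first using the Bass stable rank condition to make an appropriate sub-tuple unimodular, then using symplectic transvections to clear the entries corresponding to the last hyperbolic pair. The condition $n \ge \lfloor m/2 \rfloor$, equivalently $2n + 2 \ge m + 2$, is exactly what guarantees the relevant unimodular tuple (of length roughly $2n+1$) is reducible. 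After splitting off the last hyperbolic plane, the class of $A$ in $Sp_{2n+2}(R)/Ep_{2n+2}(R)$ is seen to come from $Sp_{2n}(R)/Ep_{2n}(R)$.

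The main obstacle, and the part requiring genuine care rather than routine bookkeeping, is the symplectic case: ensuring that every column manipulation used in the reduction is realized by an \emph{elementary symplectic} matrix (a product of symplectic transvections), not merely by an arbitrary element of $E_{2n+2}(R)$. One must choose the sequence of reductions so that symplectic compatibility is preserved at each stage, which is why the bound on $n$ involves $\lfloor m/2 \rfloor$ rather than $m-1$: roughly, each symplectic elementary step effectively adjusts a pair of coordinates, so the stable rank hypothesis is "used up" half as fast. A secondary point to verify carefully is that the final normalization step genuinely lands in the image of the canonical embedding and not just in a conjugate of it; this is handled by a last elementary correction. For the detailed verification that the constructed matrices are elementary and that the hypotheses are used sharply, we refer to Michael Stein \cite{MR0528869}*{Theorem 2.2}.
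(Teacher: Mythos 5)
The paper itself does not prove this statement; it quotes it from Stein \cite{MR0528869}, so your proposal has to stand on its own, and as written it has genuine gaps exactly where the stated ranges are sharp. The arithmetic of the stable-range hypothesis does not support your reduction at the boundary: with $\bsr(R)=m$, reducibility is guaranteed only for unimodular tuples of length at least $m+1$, while the hypothesis $n\ge m-1$ gives the first column of $A\in SL_{n+1}(R)$ length $n+1\ge m$ only --- your assertion ``$n+1\ge m+1$'' is false at $n=m-1$. Thus your first-column argument proves surjectivity only for $n\ge m$ (the classical Bass--Vaserstein $GL$-range) and says nothing about the case $n=m-1$, which is precisely the extra content of the statement as transcribed. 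In fact that boundary case is delicate: for $R=\O((\C^*)^3)$ one has $m=\bsr(R)=2$ by Theorem \ref{Bru-bsr}, yet $SL_2(R)\neq E_2(R)$, since $(\C^*)^3$ is homotopy equivalent to a $3$-torus, $[X,S^3]\cong H^3(X;\mathbb{Z})\neq 0$, and elementary products are nullhomotopic; so the case $n=m-1$ cannot be reached by any argument without further hypotheses, and the range your method actually yields ($n\ge m$, i.e.\ the condition $\mathrm{SR}_{n+1}$) is the one you should be proving. Similarly, in part (2) your claimed equivalence ``$n\ge\lfloor m/2\rfloor \Leftrightarrow 2n+2\ge m+2$'' fails for odd $m$ (it gives only $2n+2\ge m+1$), so the bookkeeping that is supposed to show the hypothesis is ``used sharply'' is incorrect; after the first reduction one is left with tuples of length $2n+1$, and at the boundary $2n+1\le m$ the plain reducibility argument stalls.

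The second, independent problem is that the symplectic half is not actually proved. You correctly identify the essential difficulty --- every reduction step must be realized by elementary \emph{symplectic} matrices (products of symplectic transvections), not arbitrary elementary matrices --- and then you defer exactly this verification to Stein's Theorem 2.2, i.e.\ to the statement being proved, which is circular. The heuristic that ``each symplectic elementary step adjusts a pair of coordinates, so the stable rank is used up half as fast'' is not an argument; in the literature the symplectic surjective stability is obtained from a transitivity statement for $Ep_{2n+2}(R)$ on unimodular (hyperbolic) vectors under a refined stable-range condition adapted to the form, and establishing that $\bsr(R)=m$ suffices in the stated range is the substantive content of Stein's proof. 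Finally, in part (1) the closing step ``one more elementary reduction normalizes the $(n+1)$-st row and column'' compresses several operations (use unimodularity of the shortened column to create a unit in the cleared slot, then clear the remaining entries of the last row and column); this part is routine but should be written out, since it is where one checks that the correction really lies in $E_{n+1}(R)$ and that the resulting matrix is in the image of the canonical embedding.
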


When $R$ has Bass stable rank one, we conclude that $SL_n(R) = E_n(R)$ and $Sp_{2n}(R) = Ep_{2n}(R)$ for all $n \ge 2$. 
\medskip 

We recall some terminology from Lie algebra. Let $V$ be a finite dimensional real vector space endowed with a scalar product $(\cdot, \cdot)$. Every nonzero $\alpha \in V$ yields a reflection $w_\alpha \colon V \to V, \beta \mapsto \beta - \frac{2(\beta,\alpha)}{(\alpha,\alpha)}\alpha$. A subset $\Phi \subset V$ is called a {\it root system} of $V$ if the following conditions are satisfied:
\begin{enumerate}
    \item[(i)] $\Phi$ is finite, spans $V$ and does not contain $0$.
    \item[(ii)] $\Phi$ is invariant under reflection $w_\alpha$ for all $\alpha \in \Phi$.
    \item[(iii)] $2(\beta,\alpha)/(\alpha,\alpha)$ is an integer for all $\alpha, \beta \in \Phi$.
\end{enumerate}
The rank of $\Phi$ is the dimension of $V$. Moreover, a root system $\Phi$ is called 
\begin{itemize}
    \item reduced, if for each $\alpha \in \Phi$, the only roots proportional to $\alpha$ are $\alpha, -\alpha$. 
    \item irreducible, if it cannot be decomposed into two nonempty complementary subsets such that each root in one set is orthogonal to each root in the other.
\end{itemize}

The next result shows that the number of factors needed for the factorization does not increase with the size of the matrix. 

\begin{Thm}[O.\@ I.\@ Tavgen' \cite{MR1175793}*{Proposition 1}] \label{theorem:tavgen}
	Let $\Phi$ be a reduced irreducible root system of rank $l \geq 2$ and let $R$ be a unital commutative ring. Suppose that for subsystems $\Delta = \Delta_1, \Delta_l$ of rank $l-1$ the elementary Chevalley group $E(\Delta, R)$ admits a unitriangular factorization with $L$ factors
	\begin{align*}	
	E(\Delta, R) = U^-(\Delta, R)U(\Delta, R) \cdots U^{\pm}(\Delta, R).	
	\end{align*}
	Then the elementary Chevalley group $E(\Phi, R)$ admits a unitriangular factorization with the same number of factors
	\begin{align*}	
	    E(\Phi, R) =  U^-(\Phi, R)U(\Phi, R) \cdots U^{\pm}(\Phi, R).	
	\end{align*}
\end{Thm}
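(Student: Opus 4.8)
The plan is to prove directly that the product set
\[
\mathbf{X}:=U^-(\Phi,R)\,U(\Phi,R)\,U^-(\Phi,R)\cdots U^{\pm}(\Phi,R)\qquad(L\text{ factors})
\]
coincides with $E(\Phi,R)$; this is exactly the asserted factorization. Since $\mathbf{X}\subseteq E(\Phi,R)$ and $1\in\mathbf{X}$, it suffices to show that $E(\Phi,R)\cdot\mathbf{X}=\mathbf{X}$, for then $E(\Phi,R)=E(\Phi,R)\cdot 1\subseteq\mathbf{X}\subseteq E(\Phi,R)$. First, fix a base $\Pi=\{\alpha_1,\dots,\alpha_l\}$ of $\Phi$ so that $\Delta_1=\Phi\cap\mathbb{Z}\{\alpha_2,\dots,\alpha_l\}$ and $\Delta_l=\Phi\cap\mathbb{Z}\{\alpha_1,\dots,\alpha_{l-1}\}$ are the two given rank-$(l-1)$ subsystems. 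Their simple roots together exhaust $\Pi$, so every elementary root subgroup $X_{\pm\alpha_i}$ lies in $E(\Delta_1,R)$ or in $E(\Delta_l,R)$, whence $E(\Phi,R)=\langle E(\Delta_1,R),E(\Delta_l,R)\rangle$. Consequently it is enough to prove $E(\Delta_i,R)\cdot\mathbf{X}=\mathbf{X}$ for $i\in\{1,l\}$: then $\mathbf{X}$ is stable under left multiplication by a generating set of $E(\Phi,R)$, and writing any $g\in E(\Phi,R)$ as a word in $E(\Delta_1,R)\cup E(\Delta_l,R)$ and peeling off the letters one at a time gives $g=g\cdot 1\in g\mathbf{X}=\mathbf{X}$.

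For the two relevant indices $i\in\{1,l\}$, let $P_i$ be the standard parabolic with Levi subgroup $L_i$ (whose root system is $\Delta_i$) and unipotent radical $V_i$, and let $V_i^-$ be the unipotent radical of the opposite parabolic. I will use the standard structural facts that $U(\Phi,R)=V_i\,U(\Delta_i,R)$ and $U^-(\Phi,R)=V_i^-\,U^-(\Delta_i,R)$, that $U^{\pm}(\Delta_i,R)\subseteq E(\Delta_i,R)\subseteq L_i$, and that $L_i$ — hence $E(\Delta_i,R)$ — normalizes both $V_i$ and $V_i^-$, so that $g V_i=V_i g$ and $g V_i^-=V_i^- g$ for every $g\in E(\Delta_i,R)$. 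The heart of the argument is the identity
\[
\mathbf{X}=\mathbf{V}_i\cdot E(\Delta_i,R),\qquad \mathbf{V}_i:=V_i^-\,V_i\,V_i^-\cdots\ \ (L\text{ factors, with the sign pattern of }\mathbf{X}).
\]

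To establish it I substitute $U^-(\Phi,R)=V_i^-U^-(\Delta_i,R)$ and $U(\Phi,R)=V_iU(\Delta_i,R)$ into each of the $L$ factors of $\mathbf{X}$. Since every element of $E(\Delta_i,R)$ — in particular every element of each $U^{\pm}(\Delta_i,R)$ — commutes past $V_i$ and $V_i^-$, one may collect all the $V$-factors on the left, keeping their order $V_i^-,V_i,V_i^-,\dots$, and all the $U^{\pm}(\Delta_i,R)$-factors on the right, keeping their order $U^-(\Delta_i,R),U(\Delta_i,R),U^-(\Delta_i,R),\dots$; the left block is $\mathbf{V}_i$ and the right block is $U^-(\Delta_i,R)U(\Delta_i,R)\cdots U^{\pm}(\Delta_i,R)$ with $L$ factors, which equals $E(\Delta_i,R)$ by hypothesis. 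Granting the identity, and using that $E(\Delta_i,R)$ normalizes each $V_i^{\pm}$ and hence $\mathbf{V}_i$ (so $E(\Delta_i,R)\mathbf{V}_i=\mathbf{V}_iE(\Delta_i,R)$),
\[
E(\Delta_i,R)\cdot\mathbf{X}=E(\Delta_i,R)\,\mathbf{V}_i\,E(\Delta_i,R)=\mathbf{V}_i\,E(\Delta_i,R)\,E(\Delta_i,R)=\mathbf{V}_i\,E(\Delta_i,R)=\mathbf{X},
\]
where the last-but-one equality uses that $E(\Delta_i,R)$ is a group. Running this for $i=1$ and $i=l$ yields $E(\Phi,R)\cdot\mathbf{X}=\mathbf{X}$ and finishes the proof.

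I expect the main obstacle to be the bookkeeping in the displayed identity: one must verify that expanding the $L$ factors of $\mathbf{X}$ and sliding the Levi parts to the right leaves precisely the hypothesised $L$-factor word for $E(\Delta_i,R)$ on the right — no stray factors and the correct alternating signs. This is exactly where the two ingredients $U^{\pm}(\Phi,R)=V_i^{\pm}U^{\pm}(\Delta_i,R)$ and the normalization of the (opposite) unipotent radical by the Levi subgroup are both indispensable, and it is the mechanism by which the number of factors fails to grow — which is the whole point of the rank reduction. The remaining ingredients — exhibiting the two subsystems and checking $E(\Phi,R)=\langle E(\Delta_1,R),E(\Delta_l,R)\rangle$ — are routine for standard maximal parabolic subsystems of an irreducible root system of rank $\ge 2$.
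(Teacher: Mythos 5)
Your proof is correct, and it is essentially the classical argument of Tavgen' (as reproduced in Vavilov--Smolenski\u{\i}--Sury): reduce to stability of the $L$-fold product set under left multiplication by $E(\Delta_1,R)$ and $E(\Delta_l,R)$, using the Levi decompositions $U^{\pm}(\Phi,R)=V_i^{\pm}U^{\pm}(\Delta_i,R)$ and the fact that the Levi part normalizes the unipotent radicals to absorb an $L$-factor word into the product without increasing its length. The survey itself does not reprove the theorem but cites Tavgen's Proposition 1, whose proof follows exactly this scheme, so your argument matches the source.
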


In the case of Bass stable rank one, combining Lemma \ref{SL2-4-factor} and Theorem \ref{theorem:tavgen} Vavilov, Smolenski\u{\i} and Sury obtain the following. 
\begin{Thm}[\cite{MR2822515}*{Theorem 1}]
    Let\/ $\Phi$ be a reduced irreducible root system and\/ $R$ be a unital commutative ring with $\bsr(R)=1$. Then the elementary Chevalley group\/ $E(\Phi,R)$ of type $\Phi$ admits unitriangular factorization
    $$ E(\Phi,R)= U^-(\Phi,R) U(\Phi,R) U^-(\Phi,R) U(\Phi,R) \quad \, \text{ of length } 4.
    $$
\end{Thm}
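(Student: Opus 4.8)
The plan is to induct on the rank $l$ of $\Phi$, using Lemma \ref{SL2-4-factor} as the base case and Theorem \ref{theorem:tavgen} for the inductive step. For $l = 1$ the only reduced irreducible root system is $A_1$, and the elementary Chevalley group of type $A_1$ over $R$ is $E_2(R)$. Since $\bsr(R) = 1$, Lemma \ref{SL2-4-factor} gives
\[
E(A_1, R) = SL_2(R) = U^-(A_1,R)\,U(A_1,R)\,U^-(A_1,R)\,U(A_1,R),
\]
which is a unitriangular factorization of length $4$ in exactly the alternating pattern (beginning with a lower unitriangular factor) that Theorem \ref{theorem:tavgen} propagates. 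This settles the statement for $\Phi = A_1$ and provides the starting point of the induction.

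For the inductive step, assume $l \geq 2$ and that every reduced irreducible root system of rank $l-1$ admits a unitriangular factorization of the form $U^- U U^- U$ with $L = 4$ factors. Deleting a terminal node of the (connected, finite-type) Dynkin diagram of $\Phi$ yields again a connected Dynkin diagram of finite type, hence a reduced irreducible root system of rank $l-1$; performing this at the first node and at the last node produces precisely the subsystems $\Delta_1$ and $\Delta_l$ that appear in the hypothesis of Theorem \ref{theorem:tavgen}. By the inductive hypothesis, both $E(\Delta_1, R)$ and $E(\Delta_l, R)$ factor as $U^- U U^- U$ with four factors. Theorem \ref{theorem:tavgen} then yields
\[
E(\Phi, R) = U^-(\Phi,R)\,U(\Phi,R)\,U^-(\Phi,R)\,U(\Phi,R),
\]
again of length $4$, closing the induction and completing the proof for all $\Phi$.

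The genuine mathematical content is entirely contained in the two cited inputs (the $SL_2$-factorization over a ring of Bass stable rank one, and Tavgen's rank-reduction), so the only thing left to verify is of a bookkeeping nature: first, that removing an end node of a finite-type Dynkin diagram never disconnects it — true because such diagrams are trees, so the smaller root systems remain irreducible and the inductive hypothesis does apply to them; and second, that the order and parity of the factors delivered by Lemma \ref{SL2-4-factor} ($U^-, U, U^-, U$) match the alternating pattern that Theorem \ref{theorem:tavgen} preserves, which is immediate. I therefore expect no real obstacle; the entire argument is a two-line induction once the hypotheses of Tavgen's lemma are seen to be met at each stage.
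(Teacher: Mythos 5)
Your proposal is correct and follows exactly the route the paper indicates: the theorem is obtained by combining Lemma \ref{SL2-4-factor} (the $SL_2$ base case for $\bsr(R)=1$) with Tavgen's rank-reduction, Theorem \ref{theorem:tavgen}, via induction on the rank. Your added bookkeeping (that deleting an end node of a connected finite-type Dynkin diagram leaves a connected diagram, and that the $U^-UU^-U$ pattern is preserved) is accurate and fills in the same details implicit in the Vavilov--Smolenski\u{\i}--Sury argument cited in the paper.
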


Together with Theorem \ref{Stein} we conclude

\begin{Thm} \label{bsr1-SL-Sp} 
    Let\/ $R$ be a unital commutative ring with $\bsr(R)=1$. Then the simply connected Chevalley groups of type $A_n, n \ge 1$ and type $C_n, n \ge 2$ admit unitriangular factorization
    \begin{align*}
        &SL_{n+1}(R) = U^-(A_{n},R) U(A_{n},R) U^-(A_{n},R) U(A_{n},R) \\
        &Sp_{2n}(R) = U^-(D_{n},R) U(D_{n},R) U^-(D_{n},R) U(D_{n},R) \quad \, \text{ of length } 4.
    \end{align*} 
\end{Thm}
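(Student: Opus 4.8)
The plan is to obtain this statement by combining the two immediately preceding results: the factorization of elementary Chevalley groups of length four over rings with $\bsr(R)=1$ (due to Vavilov, Smolenski\u{\i} and Sury), which handles the elementary group, and the stability statement of Theorem~\ref{Stein}, which lets us replace the elementary group by the full group in types $A_n$ and $C_n$.

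First I would specialize the preceding theorem to the reduced irreducible root systems $\Phi=A_n$ and $\Phi=C_n$. Since $\bsr(R)=1$, it gives
\begin{align*}
E(A_n,R) &= U^-(A_n,R)\,U(A_n,R)\,U^-(A_n,R)\,U(A_n,R),\\
E(C_n,R) &= U^-(C_n,R)\,U(C_n,R)\,U^-(C_n,R)\,U(C_n,R),
\end{align*}
both of length four. Under the standard identifications $E(A_n,R)=E_{n+1}(R)$ and $E(C_n,R)=Ep_{2n}(R)$ of the elementary Chevalley groups of types $A_n$ and $C_n$, it then suffices to prove $E_{n+1}(R)=SL_{n+1}(R)$ for $n\ge 1$ and $Ep_{2n}(R)=Sp_{2n}(R)$ for $n\ge 2$.

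To do this I would set $m=\bsr(R)=1$ in Theorem~\ref{Stein}. Part~(1) then makes the map $SL_n(R)/E_n(R)\to SL_{n+1}(R)/E_{n+1}(R)$ surjective for every $n\ge 0$; starting from the base case $SL_2(R)=E_2(R)$, which is Lemma~\ref{SL2-4-factor}, and inducting along these surjections forces $SL_n(R)=E_n(R)$ for all $n\ge 1$. Likewise Part~(2) makes $Sp_{2n}(R)/Ep_{2n}(R)\to Sp_{2n+2}(R)/Ep_{2n+2}(R)$ surjective for every $n\ge 0$, and since $Sp_2(R)=SL_2(R)=E_2(R)=Ep_2(R)$ the same induction gives $Sp_{2n}(R)=Ep_{2n}(R)$ for all $n\ge 1$, in particular for $n\ge 2$. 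Feeding these equalities back into the two displayed factorizations yields the theorem.

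I do not expect a genuine obstacle here, since both ingredients are quoted results and what remains is bookkeeping. The one point deserving care is verifying that, with $m=1$, the index bounds $n\ge m-1$ and $n\ge\lfloor m/2\rfloor$ in Theorem~\ref{Stein} actually reach the base cases used in the two stabilization inductions, so that the arguments are not vacuous, together with the routine check that the elementary Chevalley groups of types $A_n$ and $C_n$ coincide with $E_{n+1}(R)$ and $Ep_{2n}(R)$ and hence with $SL_{n+1}(R)$ and $Sp_{2n}(R)$ under the hypothesis.
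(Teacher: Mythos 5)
Your proposal is correct and follows essentially the same route as the paper, which obtains Theorem \ref{bsr1-SL-Sp} precisely by combining the length-four factorization of $E(\Phi,R)$ due to Vavilov--Smolenski\u{\i}--Sury (itself Lemma \ref{SL2-4-factor} plus Theorem \ref{theorem:tavgen}) with Stein's stability theorem \ref{Stein}, which for $\bsr(R)=1$ yields $SL_{n+1}(R)=E_{n+1}(R)$ and $Sp_{2n}(R)=Ep_{2n}(R)$. The stabilization induction you spell out is exactly the bookkeeping the paper leaves implicit in the phrase ``Together with Theorem \ref{Stein} we conclude.''
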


The unitriangular factors for $Sp_{2n}$ are matrices of the following forms
\begin{align} \label{U-minus}
    U^{-}  \ni \begin{pmatrix} I_n & 0 \\ Z & I_n \end{pmatrix} \begin{pmatrix} A^{-T} & 0 \\ 0 & A \end{pmatrix} = \begin{pmatrix} A^{-T} & 0 \\ B & A \end{pmatrix}, \quad & Z = B A^T \text{ symmetric  }\\ 
     \text{and }& A \text{  upper unitriangular,} \nonumber
\end{align}
and the same for upper unitriangular
\begin{align} \label{U-plus}
     U^{+}  \ni \begin{pmatrix} I_n & Z \\ 0 & I_n \end{pmatrix} \begin{pmatrix} A^{-1} & 0 \\ 0 & A^T \end{pmatrix} = \begin{pmatrix} A^{-1} & C \\ 0 & A^{T} \end{pmatrix}, \quad & Z = C A^{-T} \text{ symmetric }\\ 
    \text{and }& A \text{ upper unitriangular.}  \nonumber
\end{align}
These factors are symplectic with respect to the symplectic form $\begin{pmatrix}
         0 & I_n \\
         -I_n & 0
     \end{pmatrix}$, but become unitriangular once we revert the order of the last $n$ basis elements of $R^{2n}$.
This is equivalent to working with another symplectic form
     $ \begin{pmatrix}
         0 & L_n \\
         -L_n & 0
     \end{pmatrix}$, where $L_n$ is the $n \times n$ matrix with $1$ along the skew-diagonal.
\bigskip

Here comes a list of examples of rings of complex-valued continuous or holomorphic functions with Bass stable rank 1, cf.\@ Raymond Mortini and Rudolf Rupp \cite{MR4390034}*{Example 26.106}.
For all these examples  factorization of symplectic and  special linear groups of arbitrary size into 4 elementary factors holds. 
\begin{enumerate}
    \item The disk algebra $A(\mathbb{D})$ of functions which are holomorphic in the unit disk $\mathbb{D}$ and continuous on $\overline{\mathbb{D}}$ (Peter Jones, Donald Marshall and Thomas Wolff \cite{MR0826488} and independently Gustavo Corach and Fernando Su{\' a}rez \cite{MR0806470}*{Theorem 1.2})
    \item $A^n(\mathbb{D})= \{f \in \O(\mathbb{D}): f^{(n)} \text{ continuously extendable to } \overline{\mathbb{D}} \}, \quad n \in \N$
    \smallskip
    \item $A^{\infty}(\mathbb{D}) = \bigcap_{n=1}^\infty A^n(\mathbb{D})$
    \smallskip
    \item The Hölder-Lipschitz class 
    $$\Lambda_\alpha(\overline{\mathbb{D}}) = \Bigl\{ f \in A(\mathbb{D}): \sup_{z,w \in \overline{\mathbb{D}}, z \neq w}\frac{\lvert f(z)-f(w)\rvert}{\lvert z-w \rvert^\alpha} < \infty  \Bigr\}, \quad 0 < \alpha \le 1 $$
    \item $\lambda_\alpha(\overline{\mathbb{D}}) = \{ f \in \Lambda_\alpha(\overline{\mathbb{D}}) : \lvert f(z)-f(w)\rvert = o(\lvert z-w \rvert^\alpha) \}, \quad 0 < \alpha < 1$
    \smallskip
    \item The Wiener algebra $W^+(\mathbb{D}) = \{ \sum_{n=0}^\infty a_n z^n : \sum_{n=0}^\infty \lvert a_n \rvert < \infty \} $
    \smallskip
    \item The Hardy algebra $H^\infty(\mathbb{D})$ of bounded holomorphic functions on $\mathbb{D}$ (Sergei Treil \cite{MR1183608})  and the Hardy algebra $H^\infty (X)$ for any connected open Riemann surface $X$ and for certain infinitely connected planar domains $X$ (Behrens domains) (Vadim Tolokonnikov \cite{MR1273527}) 
    \item $\C + B H^\infty(\mathbb{D})$ for a Blaschke product $B$ (Mortini, Amol Sasane and Brett Wick \cite{MR2610796})
    \smallskip
    \item  The algebra $A(\overline{X})$  of complex-valued functions which are continuous on $\overline{X}$ and holomorphic in $X$ for a bordered Riemann surface $X$ (Jürgen Leiterer \cite{MR4275099}*{Lemma 1.2})
    \item The algebra $\O(X)$ for open Riemann surface $X$ (Herta Florack \cite{MR0037362}) and for $X$ is an one-dimensional Stein space, i.e.\@ open Riemann surface with discrete singularities (Corollary \ref{bsr1=dim1})
    
\smallskip

\noindent The only example for continuous algebras we are aware of is the following:
    \item \label{example-vaserstein}  The algebra $\mathcal{C}(T)$  of complex-valued continuous functions on a normal topological space $T$ of real dimension one (Vaserstein \cite{MR0284476}*{Theorem 7})
\end{enumerate}

\subsection{Commutative Banach algebras} \label{subsec:banachalg}
In the list above we have several examples where the ring is a commutative Banach algebra with Bass stable rank one, their factorization is covered by Theorem \ref{bsr1-SL-Sp}. However, there are commutative Banach algebras with higher Bass stable rank, e.g.\@ $\mathcal{C}(T)$ has Bass stable rank $\lfloor \frac{d}{2}\rfloor + 1$ for a compact topological space $T$ with finite covering dimension $d$. 

\begin{Thm}[John Milnor \cite{MR0349811}*{Lemma 7.1}] \label{milnor}
    Let $R$ be a unital commutative Banach algebra over $\C$. Then the path component of the identity in $SL_n(R)$ coincides with the elementary group $E_n(R)$. 
\end{Thm}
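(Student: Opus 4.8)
The plan is to establish the two inclusions separately, the nontrivial one being that the path component $SL_n(R)^{\circ}$ of the identity is contained in $E_n(R)$. The inclusion $E_n(R)\subseteq SL_n(R)^{\circ}$ is immediate: writing $e_{ij}$ for the standard matrix units, every elementary generator $I+\lambda e_{ij}$ with $i\ne j$ is joined to $I$ inside $SL_n(R)$ by the path $s\mapsto I+s\lambda e_{ij}$, $s\in[0,1]$, and a product of elementary matrices is then joined to $I$ by the corresponding product of paths; so $E_n(R)$ lies in a single path component, which is $SL_n(R)^{\circ}$.

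For the reverse inclusion I would first reduce to a local statement near $I$. View $SL_n(R)$ inside the Banach algebra $\mathrm{M}_n(R)$ of $n\times n$ matrices over $R$, equipped with a submultiplicative algebra norm; multiplication and inversion are then continuous. Given $M\in SL_n(R)^{\circ}$, pick a continuous path $\gamma\colon[0,1]\to SL_n(R)$ with $\gamma(0)=I$ and $\gamma(1)=M$. By continuity of the group operations and compactness of $[0,1]$, any given $\varepsilon>0$ can be matched by a partition $0=t_0<\dots<t_N=1$ fine enough that $\|\gamma(t_{k-1})^{-1}\gamma(t_k)-I\|<\varepsilon$ for all $k$. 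Since $M=\prod_{k=1}^{N}\gamma(t_{k-1})^{-1}\gamma(t_k)$ with all factors in $SL_n(R)$, it is enough to find $\varepsilon_n>0$ such that $\|M-I\|<\varepsilon_n$ forces $M\in E_n(R)$.

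This local statement I would prove by induction on $n$, the case $n=1$ being vacuous because $SL_1(R)=\{1\}$. For $n\ge2$ and $\varepsilon_n$ small the entry $M_{11}=1+(M_{11}-1)$ is a unit of $R$ by the Neumann series — this is the only place completeness of $R$ is used — with $\|M_{11}^{-1}-1\|=O(\varepsilon_n)$. Using $M_{11}$ as a pivot, left-multiply $M$ by the product over $i\ge2$ of the elementary matrices $I-M_{i1}M_{11}^{-1}e_{i1}$ to annihilate the first column below the pivot, then right-multiply by suitable elementary matrices $I-c_j e_{1j}$, $j\ge2$, to annihilate the first row to the right of the pivot; all these factors are $O(\varepsilon_n)$-close to $I$. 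One is left with a block-diagonal $\mathrm{diag}(M_{11},M')$ where $M'\in GL_{n-1}(R)$ and $\det M'=M_{11}^{-1}$. Left-multiplying by the inverse of $w=\mathrm{diag}(M_{11},M_{11}^{-1},1,\dots,1)$ — which lies in $E_n(R)$ by the Whitehead identity $\mathrm{diag}(u,u^{-1})\in E_2(R)$, valid for any unit $u$ — gives $\mathrm{diag}(1,M'')$ with $M''\in SL_{n-1}(R)$ and $\|M''-I_{n-1}\|\le C_n\varepsilon_n$. Choosing $\varepsilon_n$ so small that $C_n\varepsilon_n<\varepsilon_{n-1}$, the inductive hypothesis yields $M''\in E_{n-1}(R)$, whence $\mathrm{diag}(1,M'')\in E_n(R)$ because the block embedding carries elementary matrices to elementary matrices. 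Unwinding the finitely many multiplications shows $M\in E_n(R)$, completing the induction and the proof.

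I expect the only delicate part to be the error bookkeeping inside the inductive step: tracking how the constant $C_n$ accumulates through finitely many multiplications and through the bound $\|u^{-1}\|\le(1-\|1-u\|)^{-1}$. This is routine given submultiplicativity of the norm. The two structural inputs are completeness of $R$, used to invert matrix entries near $1$, and the Whitehead trick, which is exactly what absorbs the residual unit $M_{11}$ so that the reduction lands at $I$ rather than at a diagonal matrix — that is, it is what keeps the argument inside $SL$ rather than $GL$. Beyond these, no deeper result is needed.
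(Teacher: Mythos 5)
Your argument is correct: the telescoping of a path into factors near the identity, the Neumann-series inversion of the pivot, Gaussian elimination to a block form, and the Whitehead identity $\mathrm{diag}(u,u^{-1})\in E_2(R)$ absorbing the residual unit constitute a complete proof, and the error bookkeeping you defer is indeed routine. The paper itself gives no proof, citing Milnor's Lemma 7.1, and your argument is essentially the standard (Milnor-style) one behind that citation, so there is nothing to flag beyond noting that commutativity of $R$ is used only to make the determinant, and hence $SL_n(R)$ and the relation $\det M'=M_{11}^{-1}$, well defined.
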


Concerning the number of factors in the unitriangular factorization for $SL_n(R)$ over a commutative Banach algebra $R$, 
Brudyni \cite{MR3769716}*{Theorem 1.1} shows that there is a uniform bound  depending only on the size of the matrices and on the covering dimension of the maximal ideal space $M(R)$ endowed with the Gelfand topology. This bound is expressed in terms of the uniform bound in Vaserstein's theorem on continuous factorization, cf.\@ Theorem \ref{vaserstein}.

Bj{\" o}rn Ivarsson, Kutzschebauch and Erik L\o w proved the corresponding factorization for the symplectic group. 
\begin{Thm}[ 
\cite{MR4078081}*{Theorem 1}] \label{IKL-BA}
    Let $R$ be a unital commutative Banach algebra over $\C$. Then the path component of the identity in $Sp_{2n}(R)$ coincides with the elementary symplectic group $Ep_{2n}(R)$. 
\end{Thm}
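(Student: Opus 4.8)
The plan is to mimic Milnor's argument (Theorem \ref{milnor}) for $SL_n$, adapting each step to the symplectic setting. The inclusion $Ep_{2n}(R) \subseteq Sp_{2n}(R)_0$ is immediate, since every elementary symplectic generator $x_{\alpha}(t)$ lies on the one-parameter subgroup $s \mapsto x_{\alpha}(st)$ connecting it to the identity, and the path component of the identity is a subgroup. The content is the reverse inclusion: every element of $Sp_{2n}(R)$ that can be joined to $I_{2n}$ by a continuous path in $Sp_{2n}(R)$ is a product of elementary symplectic matrices.

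First I would exploit that $R$ is a Banach algebra to get a local statement: there is a neighborhood $V$ of $I_{2n}$ in $Sp_{2n}(R)$, in the norm topology, such that every element of $V$ lies in $Ep_{2n}(R)$. Concretely, for $g$ close enough to $I_{2n}$ one can use the block structure $g = \begin{pmatrix} A & B \\ C & D \end{pmatrix}$ with $A$ invertible (true for $\|g - I\|$ small), and perform a symplectic Gauss / $LDU$-type elimination: left- and right-multiply by elementary symplectic matrices of the forms $\begin{pmatrix} I & * \\ 0 & I \end{pmatrix}$ and $\begin{pmatrix} I & 0 \\ * & I \end{pmatrix}$ with symmetric off-diagonal blocks (these are products of elementary generators $x_{\alpha}(t)$, cf.\ \eqref{U-minus}, \eqref{U-plus}) to clear $C$ and $B$, reducing $g$ to block-diagonal form $\begin{pmatrix} A & 0 \\ 0 & A^{-T} \end{pmatrix}$ with $A$ close to $I_n$; such a matrix lies in $Ep_{2n}(R)$ because $A \in GL_n(R)$ near the identity is a product of elementary matrices (Banach algebra: $A = \exp(\log A)$ and $\exp$ of a matrix with one nonzero off-diagonal-ish piece is elementary, or directly by row reduction), and the embedded copy $A \mapsto \mathrm{diag}(A, A^{-T})$ carries $E_n(R)$ into $Ep_{2n}(R)$. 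The key algebraic fact making the elimination work is that all the intermediate ``pivot'' quantities are invertible in $R$ because they are close to invertible elements and $R^*$ is open in a Banach algebra.

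Next, from the local statement I would promote to the whole path component by a standard connectedness/covering argument. Given $g$ with a path $\gamma \colon [0,1] \to Sp_{2n}(R)$, $\gamma(0) = I_{2n}$, $\gamma(1) = g$, compactness of $[0,1]$ and continuity of $\gamma$ (in the norm topology, which is what the Banach structure gives us) yield a partition $0 = t_0 < t_1 < \dots < t_k = 1$ with $\gamma(t_{j})^{-1}\gamma(t_{j+1}) \in V$ for each $j$. Then $g = \gamma(t_0)^{-1}\gamma(t_1) \cdot \gamma(t_1)^{-1}\gamma(t_2) \cdots \gamma(t_{k-1})^{-1}\gamma(t_k)$ is a product of elements of $V \subseteq Ep_{2n}(R)$, hence $g \in Ep_{2n}(R)$. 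One must check that the path component taken in $Sp_{2n}(R)$ with respect to the Banach-algebra topology on $R$ agrees with what the statement intends; this is the natural reading and is how Milnor's $SL_n$ statement is used.

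The main obstacle is the local factorization step: carrying out the symplectic elimination while staying inside the \emph{elementary} symplectic group and respecting the symmetry constraints on the off-diagonal blocks (the blocks $B, C$ of a symplectic matrix in these coordinates satisfy symmetry relations, and the elementary long-root generators produce exactly symmetric shifts). One has to verify that the Gauss-type moves available are rich enough — i.e., that clearing $C$ against an invertible $A$ and then clearing the resulting upper-right block can be done with symplectic elementary matrices alone, and that the leftover $GL_n$ part maps into $Ep_{2n}(R)$ under the Siegel-parabolic Levi embedding. This is where the structure theory of the symplectic Chevalley group (its root subgroups, the commutator formulas, and the fact that the Levi $GL_n$ is generated over $E_n(R)$ by elementary symplectics) does the real work; the rest is the soft topological packaging borrowed verbatim from the $SL_n$ case.
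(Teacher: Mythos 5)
Your overall architecture is sound, and it is indeed the natural route to this statement (the survey quotes the theorem from Ivarsson--Kutzschebauch--L{\o}w without reproducing a proof; the local-to-global scheme you describe --- find a norm-neighborhood $V$ of $I_{2n}$ with $V\subseteq Ep_{2n}(R)$, then subdivide a path and use that $Ep_{2n}(R)$ is a subgroup --- is exactly the soft Banach--Lie packaging one expects, and the easy inclusion $Ep_{2n}(R)\subseteq Sp_{2n}(R)_0$ is fine). The symplectic Gauss elimination is also fine: for $g=\bigl(\begin{smallmatrix} A & B\\ C& D\end{smallmatrix}\bigr)$ close to $I_{2n}$ the block $A$ is invertible, $CA^{-1}$ and $A^{-1}B$ are symmetric by the symplectic relations, so clearing $C$ and then $B$ uses genuine elements of $Ep_{2n}(R)$ and leaves the Levi factor $\mathrm{diag}(A,A^{-T})$.

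The genuine gap is in how you dispose of that Levi factor. The justification ``$A\in GL_n(R)$ near the identity is a product of elementary matrices'' is false: products of elementary matrices have determinant $1$, whereas $\det A$ is merely a unit close to $1$ (for $n=1$ there are no elementary matrices at all), and $\exp(\log A)$ does not produce elementary factors. The fallback claim that the Siegel--Levi embedding carries all of $GL_n(R)$ into $Ep_{2n}(R)$ is also false in general: for $R=\mathcal{C}(T)$ the hyperbolic image $\mathrm{diag}(A,A^{-T})$ of an arbitrary invertible $A$ need not even be nullhomotopic, hence cannot lie in $Ep_{2n}(R)$; so the hypothesis that $A$ is near $I_n$ must actually be used here. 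The standard repair is short: since $A$ is close to $I_n$, all leading principal minors are invertible, so $A=LDU$ with $L$, $U$ unitriangular (hence in $E_n(R)$) and $D=\mathrm{diag}(u_1,\dots,u_n)$ with units $u_i$ close to $1$; the embedding $A\mapsto\mathrm{diag}(A,A^{-T})$ sends $E_n(R)$ into $Ep_{2n}(R)$ via the short-root generators $I+t(e_{ij}-e_{n+j,n+i})$, and each factor $\mathrm{diag}(1,\dots,u_i,\dots,1,\dots,u_i^{-1},\dots,1)$ lies in $Ep_{2n}(R)$ by the Whitehead lemma performed inside the long-root copy of $SL_2$ acting on the $i$-th hyperbolic plane, since $\mathrm{diag}(u,u^{-1})\in E_2(R)$ for every unit $u$. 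With this step inserted, your neighborhood $V\subseteq Ep_{2n}(R)$ exists and the compactness/subdivision argument completes the proof as you describe.
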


\section{Nullhomotopy} \label{sec: nullhom}

Let $G = G(\C)$ be a linear algebraic group. 
The polynomial maps from an affine algebraic variety $X$ to $G$ and the holomorphic maps from a Stein space $X$ to $G$ are contained in the space of continuous maps from $X$ to $G$, thus they have an induced topology from the compact-open topology on the space of continuous maps. If such a map is a finite product of unitriangular matrices
\[
    f(x) = U^-_1(x) U^+_2(x) \cdots U^\pm_K(x),
\]
it is necessarily homotopic to the constant identity map. The reason is that every unipotent matrix is the sum of identity and a nilpotent matrix. For a nilpotent matrix $N$, 
\begin{align*}
    \log (I + N) = \sum_{k=1}^{\infty } \frac{(-1)^k}{k} N^k
\end{align*}
is a finite sum. Thus every unipotent matrix $U(x)$ can be written as the exponential of $\log U(x)$ and the homotopy is given by $\exp(t \log U(x))$. 
We say that a map $f \colon X \to G$ is \emph{nullhomotopic} if there is a continuous homotopy $h \colon [0,1] \times X \to G$ such that $h(0,x) = I$ and $h(1, x) = f(x)$ for all $x \in X$. 
\smallskip

In the continuous case, when the group $G$ is $SL_n(\C)$ or $Sp_{2n}(\C)$, the nullhomotopy characterizes unitriangular factorization, i.e.\@ every nullhomotopic continuous map from a finite dimensional normal topological space $T$ to $G$ can be factorized, cf.\@ Theorem \ref{vaserstein} and Theorem \ref{IKL-Sp-Cont}. In other words, a continuous map $T \to G$ can be factorized if and only if  it is contained  in the path component of identity of $\mathcal{C}(T, G)$
\begin{align*}
    E_n( \mathcal{C}(T) ) = (\mathcal{C}(T, SL_n(\C)))_0, \quad Ep_{2n}( \mathcal{C}(T) ) = (\mathcal{C}(T, Sp_{2n}(\C)))_0. 
\end{align*}
In particular, if a finite dimensional normal topological space $T$ is contractible with a contraction $\alpha \colon [0,1] \times X \to X$, every continuous map $f$ from $T$ to $G$ is connected to the constant map, hence is a product of unitriangular factors. Indeed the path $h \colon [0,1] \times X \to G, (t, x) \mapsto f(\alpha_t(x))$ connects $f$ to the constant map $f \circ \alpha_1$.  
\smallskip

Nullhomotopy also characterizes unitriangular factorization for holomorphic maps from a finite dimensional reduced Stein space $X$ to $SL_n(\C)$ or to $Sp_{2n}(\C)$, cf.\@ Theorems \ref{IK-SL}, Theorem \ref{Schott} and Theorem \ref{Bao-Sp}. 
By Grauert's Oka principle, the existence of a homotopy $h \colon [0,1] \times X \to G$ through continuous maps $h_t \in \mathcal{C}(X, G)$ with holomorphic endpoint maps implies the existence of a homotopy $h' \colon [0,1] \times X \to G$ through holomorphic maps $h'_t \in \O(X, G)$. Thus a holomorphic map $X \to G$ can be factorized if and only if  it is contained  in the path component of identity in $\mathcal{O}(X, G)$
\begin{align*}
    E_n( \mathcal{O}(X) ) = (\mathcal{O}(X, SL_n(\C)))_0, \quad Ep_{2n}( \mathcal{O}(X) ) = (\mathcal{O}(X, Sp_{2n}(\C)))_0. 
\end{align*}
We conclude again that for a contractible Stein space $X$ all holomorphic maps from $X$ to $SL_n(\C)$ or to $Sp_{2n}(\C)$ admit unitriangular factorization. 
\smallskip

For a commutative Banach algebra $R$ we know from Milnor's result (Theorem \ref{milnor}) that $E_n(R) = (SL_n(R))_0$, the elementary subgroup coincides with the path connected component of identity in $SL_n(R)$. 
Let us make a remark about path-connectedness of $SL_n (R)$. When we for example consider a Banach algebra $R$ of holomorphic functions on a  space $X$, the topological  contractibility of $X$ does not in general imply the path connectedness of $SL_n (R)$, as we had argued in the holomorphic and continuous cases. For example for  $f$ in the
Banach algebra $H^\infty (\mathbb{D})$ the map $[0,1] \to H^\infty(\mathbb{D}), t \mapsto f_t$, where $f_t(z) = f(tz)$ is not necessarily a continuous path in $H^\infty(\mathbb{D})$. 
On the other hand, the factorization in Theorem \ref{bsr1-SL-Sp} implies that $SL_n(H^\infty(\mathbb{D}))$ is path-connected. 
\begin{Cor}[to Theorem \ref{bsr1-SL-Sp} and Theorem \ref{IKL-BA}]
    For a commutative Banach algebra $R$ with Bass stable rank one, $SL_n(R)$ and $Sp_{2n}(R)$ are path-connected. 
\end{Cor}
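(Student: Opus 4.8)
The plan is to combine the explicit four-factor decomposition of Theorem~\ref{bsr1-SL-Sp} with the elementary homotopies recorded in Section~\ref{sec: nullhom}, and then, for the symplectic case, to note that the statement is in fact immediate from Theorem~\ref{IKL-BA}.

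First I would treat $SL_n(R)$ directly. Fix $M \in SL_n(R)$; since $\bsr(R)=1$, Theorem~\ref{bsr1-SL-Sp} gives $M = U_1^- U_2^+ U_3^- U_4^+$ with each $U_i^{\pm}$ unitriangular, say $U_i^{\pm} = I + N_i$ with $N_i$ strictly triangular over $R$, so $N_i^{\,n} = 0$. Then $\log(I+N_i)$ is a \emph{finite} sum of elements of $M_n(R)$ and is again strictly triangular, hence $\exp\bigl(t\log(I+N_i)\bigr)$ is a polynomial in $t$ with coefficients in $M_n(R)$. Consequently $t \mapsto \exp\bigl(t\log U_i^{\pm}\bigr)$ is continuous for the norm of $M_n(R)$, runs from $I$ to $U_i^{\pm}$, and stays in $SL_n(R)$ because the exponent has zero trace. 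Multiplying the four resulting paths produces a norm-continuous path in $SL_n(R)$ from $I$ to $M$, so $SL_n(R)$ is path-connected.

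For $Sp_{2n}(R)$ the same scheme works once one checks that the connecting path can be kept symplectic. Write a given symplectic matrix, via Theorem~\ref{bsr1-SL-Sp}, as a product of four factors of the types \eqref{U-minus}, \eqref{U-plus}, and split each such factor as $\begin{pmatrix} I_n & 0 \\ Z & I_n\end{pmatrix}\begin{pmatrix} A^{-T} & 0 \\ 0 & A\end{pmatrix}$ (or the transposed pattern), with $Z$ symmetric and $A$ upper unitriangular. Scaling $Z \rightsquigarrow tZ$ keeps the first block symplectic; writing $A = \exp(M_0)$ for a strictly upper triangular $M_0$ over $R$, the path $t \mapsto \begin{pmatrix} \exp(-tM_0)^T & 0 \\ 0 & \exp(tM_0)\end{pmatrix}$ lies in $Sp_{2n}(R)$ because $\exp(-tM_0)^T = \bigl(\exp(tM_0)\bigr)^{-T}$. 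Again each path is a polynomial in $t$ over $M_{2n}(R)$, hence norm-continuous, and multiplying the pieces connects the given matrix to $I$ inside $Sp_{2n}(R)$.

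Alternatively — and this is really the cleanest route — Theorem~\ref{bsr1-SL-Sp} says precisely that $SL_n(R)=E_n(R)$ and $Sp_{2n}(R)=Ep_{2n}(R)$, while Milnor's Theorem~\ref{milnor} gives $E_n(R)=(SL_n(R))_0$ and Theorem~\ref{IKL-BA} gives $Ep_{2n}(R)=(Sp_{2n}(R))_0$; combining these, $SL_n(R)=(SL_n(R))_0$ and $Sp_{2n}(R)=(Sp_{2n}(R))_0$, which is the claim. There is no substantial obstacle: the content is carried entirely by the cited theorems, and the only point deserving a line of care in the hands-on version is the norm-continuity of the elementary exponential homotopies, which is automatic since the logarithm of a unitriangular matrix over $R$ is nilpotent and all the relevant exponentials are therefore polynomials in the homotopy parameter.
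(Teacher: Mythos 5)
Your proposal is correct, and your ``cleanest route'' at the end is precisely the paper's intended argument: the corollary is stated as a consequence of Theorem~\ref{bsr1-SL-Sp} (which, with the surrounding discussion of Theorem~\ref{Stein}, gives $SL_n(R)=E_n(R)$ and $Sp_{2n}(R)=Ep_{2n}(R)$ when $\bsr(R)=1$) combined with Milnor's Theorem~\ref{milnor} and Theorem~\ref{IKL-BA}, which identify $E_n(R)$ and $Ep_{2n}(R)$ with the identity path components. Your hands-on argument is a genuinely different and more elementary route: it uses only the four-factor decomposition of Theorem~\ref{bsr1-SL-Sp} and connects each factor to the identity by an explicit polynomial path ($\exp(t\log U)$ for the unitriangular case; $tZ$ together with $\mathrm{diag}(\exp(tM_0)^{-T},\exp(tM_0))$ for the factors \eqref{U-minus}, \eqref{U-plus}), so it bypasses the K-theoretic inputs of Theorems~\ref{milnor} and~\ref{IKL-BA} altogether. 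You also correctly isolate the one point that actually needs care for Banach algebras --- norm-continuity of the homotopies, which the pointwise contraction argument of Section~\ref{sec: nullhom} does not give (cf.\ the $H^\infty(\mathbb{D})$ caveat in the paper) --- and your nilpotency/polynomial-in-$t$ observation settles it. Two cosmetic remarks: membership of $\exp(t\log U_i)$ in $SL_n(R)$ follows even more directly from the fact that it is again unitriangular (identity plus strictly triangular), without invoking $\det\circ\exp=\exp\circ\operatorname{tr}$ over $R$; and for the symplectic factors it is worth saying explicitly that $\mathrm{diag}(A_t^{-T},A_t)$ with $A_t=\exp(tM_0)$ preserves the standard form $\begin{pmatrix}0&I_n\\-I_n&0\end{pmatrix}$, which is the computation your transpose identity encodes.
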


In the algebraic case the continuous nullhomotopy is only a necessary condition. The main difference to the continuous and the holomorphic cases lies in maps to $SL_2$ as discovered by Paul Cohn. 

\begin{Lem}[Cohn \cite{MR0207856}*{\S 7}] \label{cohn}
    The matrix 
    $$\left(\begin{matrix} 1+zw & z^2 \\ -w^2 & 1-zw  \end{matrix}\right) $$
    is not a product of unitriangular matrices with entries in $\mathbb{C} [z, w]$. 
\end{Lem}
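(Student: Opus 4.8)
The plan is to exploit the fact that $\mathbb{C}[z,w]$, while a regular ring, is \emph{not} a principal ideal domain, and more to the point that the elementary subgroup $E_2(\mathbb{C}[z,w])$ is a proper subgroup of $SL_2(\mathbb{C}[z,w])$. The matrix in the statement is the standard Cohn witness, built from the unimodular column $(z^2, -w^2)$ — note the pair $(z^2, w^2)$ is unimodular in $\mathbb{C}[z,w]$ only in the weak sense that it is completable to an invertible matrix after passing to a larger size, but the key point is that $(1+zw, -w^2)$ and $(z^2, 1-zw)$ form the two columns of a determinant-one matrix whose first column is $(1+zw, -w^2)$, and this column is \emph{not} reducible to $(1,0)$ by elementary operations. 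So I would first reduce the statement to a claim about the first column: if $M = U^-_1 U^+_2 \cdots$ were an elementary factorization, then applying the inverse elementary matrices on the left would transform the first column of $M$ to $(1,0)^T$, i.e.\ the first column of $M$ is in the orbit $E_2(\mathbb{C}[z,w]) \cdot (1,0)^T$.

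The heart of the argument is therefore to show that the column $v = (1+zw,\, -w^2)^T$ (equivalently, after a symmetric renaming, $(z^2,\,-w^2)$ shifted by the "diagonal" $zw$) does not lie in the elementary orbit of $(1,0)^T$. The cleanest route I know is the one Cohn himself used: introduce a degree-type valuation or a specialization homomorphism $\mathbb{C}[z,w] \to \mathbb{C}[t]$ (say $z \mapsto t$, $w \mapsto \lambda t$ for a generic parameter $\lambda$, or more robustly work with the full bidegree filtration) and track how elementary row operations act on the leading forms of the column entries. One shows that any single elementary operation $v \mapsto \begin{pmatrix} 1 & q \\ 0 & 1\end{pmatrix} v$ or $\begin{pmatrix} 1 & 0 \\ q & 1\end{pmatrix} v$ with $q \in \mathbb{C}[z,w]$ cannot decrease a suitable complexity measure of $v$ below that of the initial column — the obstruction being exactly that $z^2$ and $w^2$ generate the maximal ideal's square rather than the whole ring, so no finite sequence of such moves can reach a unimodular entry. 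An equivalent and perhaps slicker packaging: specialize to $w = 1$, obtaining $\begin{pmatrix} 1+z & z^2 \\ -1 & 1-z\end{pmatrix} \in SL_2(\mathbb{C}[z])$, which \emph{is} elementary since $\mathbb{C}[z]$ is Euclidean — so no contradiction there — hence one genuinely needs the two-variable obstruction, which is detected by looking at the associated graded ring and the failure of $(z,w)$ to behave like a principal ideal in that setting.

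Concretely, I would organize it as follows. Step one: set up the bidegree filtration on $\mathbb{C}[z,w]$ and the induced filtration on columns, and record that the top-degree part of $v$ is $(zw, -w^2)$, which is divisible by $w$ but the whole column $v$ is unimodular. Step two: show that left multiplication by a lower/upper elementary matrix acts on the associated graded by a unipotent transformation determined by the leading form of $q$, and that such transformations preserve the property "the reduction modulo the ideal $(w)$ of the two entries is, up to units, $(1, 0)$ or $(0, \ast)$ of degree $\geq 2$" — more precisely preserve a discrete invariant that distinguishes $v$ from $(1,0)^T$. Step three: conclude that $v$ is not in the $E_2$-orbit of $(1,0)^T$, hence $M \notin E_2(\mathbb{C}[z,w])$, i.e.\ $M$ admits no unitriangular factorization.

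The main obstacle is Step two: making the action on leading forms precise enough that an \emph{arbitrary} finite word in elementary matrices — not just a single letter — is controlled. The subtlety is that cancellation can occur, so the naive "degrees only go up" heuristic is false as stated; one must choose the invariant so that it is genuinely monotone (or constant on the relevant orbit) under the full group action. Cohn's resolution is essentially to work inside the free-ideal-ring / weak-algorithm framework where columns have a well-defined "length" that elementary operations cannot shorten below the intrinsic value; I would either cite that machinery (it is exactly the weak-Euclidean circle of ideas referenced after the Euclidean-rings discussion) or, for a self-contained two-variable argument, push through the explicit specialization $z \mapsto t,\, w \mapsto t + s$ and a Laurent-expansion bookkeeping that makes the obstruction a nonvanishing residue. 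Either way the punchline is the same: the unimodular column $(z^2, w^2)$ twisted by $zw$ is not elementarily trivializable over $\mathbb{C}[z,w]$, which is the whole content of the lemma.
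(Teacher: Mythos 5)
There is a genuine gap, and you point at it yourself: everything hinges on your Step two, i.e.\ on producing an invariant of a unimodular column that is monotone (or constant) under left multiplication by an \emph{arbitrary} word in elementary matrices, and no such invariant is actually supplied. Your Step one is fine but gains nothing: indeed $M\in E_2(\mathbb{C}[z,w])$ if and only if its first column lies in the $E_2$-orbit of $(1,0)^T$, since a determinant-one matrix with first column $(1,0)^T$ is itself unitriangular, so the column statement is just a restatement of the lemma. The concrete invariants you sketch fail. Reduction modulo the ideal $(w)$ is blind: modulo $w$ the Cohn matrix becomes $\left(\begin{smallmatrix} 1 & z^2\\ 0 & 1\end{smallmatrix}\right)$, which is elementary, and every elementary operation reduces mod $w$ to an elementary operation over the Euclidean ring $\mathbb{C}[z]$, where every unimodular column is trivializable; so no ``mod $(w)$'' property can separate the column from $(1,0)^T$. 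The remark that ``$z^2$ and $w^2$ generate the maximal ideal's square rather than the whole ring'' is not an obstruction of the right kind: the column $(1+zw,-w^2)^T$ \emph{is} unimodular (and, in passing, $(z^2,w^2)$ is not a unimodular pair, so the aside about completability is off). Finally, the machinery you propose to fall back on -- firs and the weak algorithm -- does not apply to $\mathbb{C}[z,w]$: a commutative fir is a principal ideal domain and the weak algorithm in the commutative case forces a polynomial ring in one variable; that theory is designed for free associative algebras. Also, the ``weak Euclidean'' reference in the survey is a \emph{positive} factorization result generalizing the Euclidean algorithm, not the obstruction you need.

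For comparison, the survey does not reprove the lemma at all but cites Cohn \cite{MR0207856}*{\S 7} (the degree argument appearing later in Section \ref{sec: cont} only rules out \emph{four} holomorphic factors, a much weaker statement). Cohn's actual argument, which is exactly the missing ingredient, runs as follows: for an integral domain with a degree function ($\deg(ab)=\deg a+\deg b$, units precisely the elements of degree $0$), every element of $GE_2$ (the group generated by elementary and invertible diagonal matrices, so a fortiori every element of $E_2$) admits a standard form $D\,E(a_1)E(a_2)\cdots E(a_n)$ with $E(a)=\left(\begin{smallmatrix} a & 1\\ -1 & 0\end{smallmatrix}\right)$ and the middle $a_i$ nonunits, and an induction on $n$ yields degree inequalities among the four entries of such a product (the top-left entry strictly dominating once the relevant $a_i$ have positive degree). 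This standard form is precisely the device that tames the cancellation you worry about in arbitrary words. The Cohn matrix, all of whose entries have degree $2$, violates these inequalities, and the short words not covered by the induction are excluded by inspection; this even shows the matrix is not in $GE_2(\mathbb{C}[z,w])$. To complete your proof you should either reproduce this standard-form/degree argument or cite it; as written, your proposal identifies the right kind of strategy but stops short of the step that makes it work.
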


However for all other semisimple Chevalley groups $G$ there is a corresponding notion of nullhomotopy -- the so-called $\mathbb{A}^1$-nullhomotopy -- which characterizes unitriangular factorization of algebraic maps from smooth affine algebraic varieties to $G$, see Theorem \ref{St-E=G0}. 

\begin{Def}
    Let $f \colon X \to G$ be an algebraic map from an affine algebraic variety $X$ to a linear algebraic group $G$ over $\C$. 
    We say that $f$ is {\it $\mathbb{A}^1$-nullhomotopic} if there exists an algebraic map $h \colon \C \times X \to G$ such that $h_0 \equiv I$ and $h_1 = f$. 
\end{Def}

Clearly an $\mathbb{A}^1$-nullhomotopic map is nullhomotopic (in the topological sense).

Conversely topological nullhomotopy for an algebraic map between smooth affine algebraic varieties does not imply $\mathbb{A}^1$-nullhomotopy. 

The first smooth algebraic surfaces which are topologically contractible but not $A^1$-contractible were detected by Utsav Choudhury and Biman Roy  \cite{MR4698492}. In fact they showed that $A^1$-contractibility characterizes $\C^2$ among smooth  affine algebraic surfaces. We remind the reader that there are many smooth topologically contractible affine algebraic surfaces  not even homeomorphic and thus not isomorphic to  $\C^2$. They all have nontrivial fundamental group at infinity. By a famous result of Chidambaram Padmanabhan Ramanujam  $\C^2$ is characterized among smooth affine algebraic surfaces by being  homeomorphic to $\R^4$ \cite{MR0286801}.

\section{Algebraic results for polynomial rings} \label{sec: alg}

After Cohn's counterexample for factorization of matrices in $SL_2$ over polynomial ring, Andrei Suslin proved his famous result on factorization of matrices in $SL_n, n \ge 3$ over polynomial rings \cite[Corollary 6.7]{MR0472792} and Laurent polynomial rings \cite[Corollary 7.10]{MR0472792} of any number of variables.

Vyacheslav Kopeiko obtained the corresponding results for the symplectic group $Sp_{2n}$ for all $n \ge 2$ over polynomial rings \cite[Theorem 3.14]{MR0497932} and Laurent polynomial rings \cite[Theorem A]{MR1806868}.

The works of Anastasia Stavrova generalize these classical results and resolve the question about unitriangular factorization over polynomial rings and Laurent polynomial rings for all simply-connected Chevalley groups. We formulate the special case of her theorem over the complex numbers. 
\begin{Thm}[Stavrova \cite{MR3189425} Corollary 6.2] \label{thm:Laurentrings}
Let $G$ be a simply connected semisimple Chevalley group without $SL_2$ factor.
Then 
$$
    G(\C[x_1^{\pm 1},\ldots,x_n^{\pm 1},y_1,\ldots,y_m])= E(\C[x_1^{\pm 1},\ldots,x_n^{\pm 1},y_1,\ldots,y_m]) \, \text{ for any } m,n\ge 0.  
$$
\end{Thm}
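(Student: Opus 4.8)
The plan is to prove $G(R)=E(R)$ by building the ring $R=\C[x_1^{\pm1},\dots,x_n^{\pm1},y_1,\dots,y_m]$ up from $\C$ one variable at a time, using throughout that every intermediate ring $R_{k,l}=\C[x_1^{\pm1},\dots,x_k^{\pm1},y_1,\dots,y_l]$ is regular --- indeed smooth over $\C$, being the coordinate ring of $\mathbb{G}_m^{\,k}\times\mathbb{A}^l$. Since $G(\Phi_1\oplus\Phi_2,R)$ and $E(\Phi_1\oplus\Phi_2,R)$ decompose as direct products over the simple factors, one reduces immediately to $G$ simple and simply connected, and then the hypothesis ``no $SL_2$ factor'' says exactly that the root system $\Phi$ has rank $\ge2$. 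The base case $G(\C)=E(\C)$ is classical: a simply connected semisimple group over a field is generated by its root subgroups, hence equals its elementary subgroup. The induction then runs on two ``engines''. Adjoining an ordinary variable $y$ is governed by \emph{homotopy invariance of the non-stable $K_1$-functor} $K_1^G(R)=G(R)/E(R)$: for $R$ regular containing a field and $\Phi$ of rank $\ge2$, the inclusion $R\hookrightarrow R[y]$ induces a bijection $K_1^G(R)\to K_1^G(R[y])$, so $K_1^G(R_{k,l})=1$ forces $K_1^G(R_{k,l}[y])=1$. Adjoining a Laurent variable $x^{\pm1}$ is governed by a \emph{Bass--Heller--Swan type decomposition}.

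For the homotopy-invariance engine one first needs $E(\Phi,R)$ to be normal in $G(\Phi,R)$; this holds once $\Phi$ has rank $\ge2$ and fails for $SL_2$ --- which is precisely why that factor is excluded, Cohn's matrix in Lemma~\ref{cohn} showing concretely that homotopy invariance breaks down for $SL_2$ over $\C[z,w]$. Normality makes $K_1^G$ group-valued and licenses the Quillen--Suslin local--global patching: a $g\in G(R[y])$ with $g(0)\in E(R)$ lies in $E(R[y])$ as soon as its image in $G(R_{\mathfrak m}[y])$ lies in $E(R_{\mathfrak m}[y])$ for every maximal ideal $\mathfrak m$ of $R$. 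This reduces the statement to a regular \emph{local} ring containing $\C$; by Popescu's desingularization such a ring is a filtered colimit of smooth $\C$-algebras, and a Lindel-type \'etale-neighbourhood argument brings one down to a localization of a polynomial algebra $\C[z_1,\dots,z_d]$, where Suslin's factorization and dilation techniques --- which require each simple factor to have rank $\ge2$ so that the elementary subgroup is large enough --- remove the $y$-dependence. I expect this homotopy-invariance step, in the uniform form covering the exceptional types and not just $SL_n$ and $Sp_{2n}$, to be the main obstacle; it is exactly where the classical arguments of Suslin and Kopeiko get genuinely generalized.

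For the Laurent engine, note that inside $R[x,x^{-1}]$ the subrings $R[x]$ and $R[x^{-1}]$ meet in $R$ and together span the whole ring, so that $\operatorname{Spec}R[x,x^{-1}]$ glues the two charts $\operatorname{Spec}R[x]$ and $\operatorname{Spec}R[x^{-1}]$ into $\mathbb{P}^1_R$; patching $G$-valued points across this covering yields a Bass--Heller--Swan formula $K_1^G(R[x,x^{-1}])\cong K_1^G(R)\oplus N$, with $N$ a $K_0$-type correction built from projective modules --- equivalently $G$-torsors --- over $\operatorname{Spec}R$. For $R=R_{k,l}$ this correction vanishes: projective modules over $R_{k,l}$ are free (Quillen--Suslin together with $\operatorname{Pic}(\mathbb{G}_m^{\,k})=1$), and the corresponding torsor obstruction is trivial because $G$ is simply connected over $\C$. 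Hence $K_1^G(R_{k,l}[x^{\pm1}])=K_1^G(R_{k,l})=1$. Iterating the two engines --- first adjoining $y_1,\dots,y_m$, then $x_1^{\pm1},\dots,x_n^{\pm1}$, regularity being preserved at every stage --- starting from $G(\C)=E(\C)$, yields $G(R)=E(R)$ for $R=\C[x_1^{\pm1},\dots,x_n^{\pm1},y_1,\dots,y_m]$, as claimed.
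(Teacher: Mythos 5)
The survey gives no proof of Theorem \ref{thm:Laurentrings}: it is quoted verbatim from Stavrova, so your sketch has to be measured against her argument. Your overall architecture does match hers: reduce to a simple simply connected group of rank $\ge 2$, start from $G(\C)=E(\C)$, and absorb the polynomial variables via homotopy invariance of the non-stable $K_1$-functor $K_1^G=G/E$ for regular $\C$-algebras --- but note that this ``engine'' is not an auxiliary lemma, it is the main theorem of the very paper being cited (proved there by exactly the local--global, Popescu/Lindel and dilation machinery you allude to), so on this half you are citing rather than proving, which is fine for a survey-level statement but means your proposal contributes no argument of its own there.

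The genuine gap is the Laurent engine. There is no established Bass--Heller--Swan decomposition $K_1^G(R[x^{\pm 1}])\cong K_1^G(R)\oplus N$ for non-stable $K_1$-functors of general Chevalley groups, and your argument that the correction term vanishes is unsupported on two counts. First, Quillen--Suslin/Swan freeness of projective modules over $\C[x_1^{\pm1},\dots,x_n^{\pm1},y_1,\dots,y_m]$ controls $GL_r$- (hence $SL_r$-) bundles only; triviality of torsors under an arbitrary split simply connected $G$ (Spin groups, $E_8$, \dots) over multi-Laurent rings is a deep separate question (Gille--Pianzola), and ``$G$ is simply connected over $\C$'' does not settle it. Second, and more fundamentally, you have not shown that the obstruction to pushing an element of $G(R[x,x^{-1}])$ into $E(R[x,x^{-1}])\,G(R)$ is measured by a torsor over $\operatorname{Spec}R$ at all: making the $\mathbb{P}^1_R$-patching work at the level of elementary subgroups is a Horrocks-type statement for $G$, which is precisely what would have to be proved. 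Stavrova's deduction of the Laurent case does not pass through such a splitting; it combines homotopy invariance with a dedicated localization/patching argument for $G(R[x,x^{-1}])$ (and extending it beyond fields to principal ideal domains required her later paper). A minor correction: Cohn's matrix (Lemma \ref{cohn}) exhibits $E_2\neq SL_2$ over $\C[z,w]$, i.e.\ failure of the factorization itself, not failure of normality of $E_2$; the exclusion of $SL_2$ enters through the rank-$\ge2$ hypothesis needed for the elementary calculus and the local--global principle, not through that example.
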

Actually Stavrova's result holds in greater generality. The field $\C$ can be replaced by any principal ideal domain $D$ satisfying $G(D) = E(D)$ \cite[Theorem 1.2]{MR4855206}. An example for such a principal ideal domain is $D = \mathbb{Z}$. Therefore, Stavrova's theorem also includes the result of Fritz Grunewald, Jens Mennicke and Vaserstein \cite{MR1086811} about $Sp_{2n}(\mathbb{Z}[z_1, z_2, \dots, z_m])$ for all $m$ and $n \ge 2$. 
\medskip

The assumption of simply-connectedness in Theorem \ref{thm:Laurentrings} is not an essential restriction. There are also
results for non simply-connected Chevalley groups. For example the special orthogonal group $SO_{2n}$, whose simply-connected covering group is the spin group $Spin_{2n}$, has been considered by Suslin and Kopeiko. 
The surjective stability from Suslin--Kopeiko \cite{MR0469914}*{Theorem 7.8} for even orthogonal groups over polynomial rings together with the homotopy invariance in Max Karoubi \cite{MR0382400}*{Corollary 0.8} for regular Noetherian ring $R$ with $2 \in R^*$, yields 
\[
    SO_{2n}(R[x_1, \dots, x_m]) = SO_{2n}(R) EO_{2n}(R[x_1, \dots, x_m]) \text{ for } n \ge \max \{ 3, \dim R + 2 \}.
\]

A more general result which we again formulate over the complex numbers and use the fact that $G(k) = E(k)$ for an algebraically closed field $k$ is the following:
\begin{Thm}[Stavrova \cite{MR4048473}*{Corollary 1.4}]
    Let $G$ be a semisimple Chevalley group without $SL_2$ factor. Then 
    \[
        G(\C[x_1, \dots, x_m]) = E(\C[x_1, \dots, x_m]) \, \, \text{ for any } m \ge 0. 
    \]
\end{Thm}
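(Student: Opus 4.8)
The plan is to reduce the statement for an arbitrary semisimple Chevalley group $G$ over $\C$ to the simply connected case already recorded in Theorem~\ref{thm:Laurentrings}. Let $\pi\colon\tilde G\to G$ be the universal central isogeny, with kernel the finite multiplicative group scheme $\mu=\ker\pi\subseteq Z(\tilde G)$; since we work over an algebraically closed field, $\mu$ is diagonalizable, hence a product of groups $\mu_{n_i}$. Two soft facts steer the argument. First, $\pi$ restricts to an isomorphism on every root subgroup $U_\alpha\cong\mathbb G_a$, so it carries the Chevalley generators of $E(\tilde G,R)$ onto those of $E(G,R)$, giving $\pi\bigl(E(\tilde G,R)\bigr)=E(G,R)$ for every commutative ring $R$. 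Second, $\tilde G$ and $G$ have the same root system, so the hypothesis that $G$ has no factor of type $A_1$ (this is how ``without $SL_2$ factor'' has to be read here; see below) passes to $\tilde G$, and Theorem~\ref{thm:Laurentrings} then gives $\tilde G(R)=E(\tilde G,R)$ for $R=\C[x_1,\dots,x_m]$. Granting these, everything comes down to proving that $\pi\colon\tilde G(R)\to G(R)$ is \emph{surjective} on $R$-points, for then $G(R)=\pi\bigl(\tilde G(R)\bigr)=\pi\bigl(E(\tilde G,R)\bigr)=E(G,R)$.

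For the surjectivity I would pass to flat cohomology. The exact sequence $1\to\mu\to\tilde G\to G\to1$ of fppf sheaves yields the exact sequence of pointed sets $\tilde G(R)\to G(R)\to H^1_{\mathrm{fppf}}(R,\mu)$, so it suffices to check that $H^1_{\mathrm{fppf}}(R,\mu)=0$ for $R=\C[x_1,\dots,x_m]$. Writing $\mu$ as a product of $\mu_n$'s, the Kummer sequence gives an exact sequence $R^*/(R^*)^n\hookrightarrow H^1_{\mathrm{fppf}}(R,\mu_n)\twoheadrightarrow\mathrm{Pic}(R)[n]$, and for a polynomial ring over $\C$ both ends vanish: the units are the nonzero constants and $\C^*$ is divisible, so $R^*/(R^*)^n=1$; and $\mathrm{Pic}(\C[x_1,\dots,x_m])=0$ because polynomial rings over a field are unique factorization domains. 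Hence $H^1_{\mathrm{fppf}}(R,\mu)=0$ and $\pi$ is onto on $R$-points. Note that for $m=0$ this descent already produces the identity $G(\C)=E(G,\C)$ --- precisely the input highlighted just before the statement --- and larger $m$ is no harder.

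The real content of the theorem therefore does \emph{not} sit in this descent but is inherited from Theorem~\ref{thm:Laurentrings}, i.e.\ from Stavrova's extension of the Suslin--Kopeiko local--global patching machinery; that is the step I expect to be genuinely hard, and it is not something one sketches in a paragraph. I would also be careful with the hypothesis: ``without $SL_2$ factor'' must mean ``no irreducible component of type $A_1$'', hence no $SL_2$ \emph{and} no $PGL_2$ direct factor. Indeed the surjection $SL_2(\C[z,w])\to PGL_2(\C[z,w])$ still holds by the same cohomology vanishing, so that $PGL_2(\C[z,w])=SL_2(\C[z,w])/\{\pm I\}$ while $E(PGL_2,\C[z,w])=E_2(\C[z,w])/\{\pm I\}$, and these differ by Cohn's Lemma~\ref{cohn}; so no $A_1$ component may be allowed, whatever its isogeny type. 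One can also bypass the reduction to $\tilde G$ entirely: Stavrova's homotopy invariance theorem for the functor $A\mapsto K_1^G(A)=G(A)/E(G,A)$ on regular rings (the main result of \cite{MR4048473}), valid precisely because $G$ has isotropic rank $\ge2$, yields $K_1^G(\C[x_1,\dots,x_m])=K_1^G(\C)$ on iteration, and $K_1^G(\C)=1$ because $\C$ is algebraically closed. Either way the crux is the homotopy-invariance/patching input, while the passage to $\C$ and the vanishing of $H^1_{\mathrm{fppf}}(\C[x_1,\dots,x_m],\mu)$ are routine.
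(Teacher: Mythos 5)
Your argument is correct, but it takes a different route from the survey, which offers no independent proof of this statement: there it is quoted directly from Stavrova \cite{MR4048473}*{Corollary 1.4} (a result on Chevalley groups over polynomial rings over Dedekind domains, covering arbitrary isogeny type at the source), specialized to the complex numbers by means of the identity $G(\C)=E(\C)$ for an algebraically closed field. You instead deduce the not necessarily simply connected case from the simply connected statement already recorded as Theorem \ref{thm:Laurentrings} (the case of no Laurent variables), via the universal central isogeny $\pi\colon\tilde G\to G$: the identification $\pi\bigl(E(\tilde G,R)\bigr)=E(G,R)$ (since $\pi$ is an isomorphism on root subgroups) together with surjectivity of $\tilde G(R)\to G(R)$, which you get from the Kummer sequence because $R^*=\C^*$ is divisible and $\mathrm{Pic}(\C[x_1,\dots,x_m])=0$; this descent is sound, and your alternative derivation by iterating homotopy invariance of $K_1^G$ is essentially the mechanism behind Stavrova's own proof, so as you say the genuinely hard input is in either case imported rather than reproved. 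What the paper's citation buys is generality and brevity; what your reduction buys is a derivation internal to the survey, resting only on the previously stated Theorem \ref{thm:Laurentrings} plus routine cohomological vanishing. Your caveat about the hypothesis is also apt: ``without $SL_2$ factor'' must indeed be read as ``no irreducible factor of type $A_1$'', since by the same $H^1$-vanishing one has $PGL_2(\C[z,w])=SL_2(\C[z,w])/\{\pm I\}$ while its elementary subgroup is $E_2(\C[z,w])/\{\pm I\}$ (note $-I\in E_2$), and Cohn's Lemma \ref{cohn} shows these differ.
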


By a further work of Stavrova \cite{MR3189425}, we see that
$\mathbb{A}^1$-nullhomotopy indeed characterizes unitriangular factorization of algebraic maps from smooth affine varieties to a linear algebraic group. 

\begin{Thm}[Stavrova \cite{MR3189425}*{Theorem 1.3, Lemma 3.3}] \label{St-E=G0}
    Let $G$ be a semisimple Chevalley group without $SL_2$ factor and $R$ a regular $\C$-algebra. Then
    \[
        E(R) = \{ g \in G(R): \exists \,\, h(x) \in G(R[x]) \text{ such that } h(0)=1, h(1)=g \}.
    \]
    In particular, every $\mathbb{A}^1$-nullhomotopic algebraic map from a smooth affine algebraic variety $X$ to $G(\C)$ is a product of unitriangular matrices with entries in $\C[X]$. 
\end{Thm}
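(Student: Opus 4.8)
The plan is to deduce this theorem from results already available in the literature by combining three ingredients: Stavrova's solution of the unitriangular factorization problem over polynomial rings, a K-theoretic homotopy invariance statement, and the surjective stability estimates for the Bass stable rank. I would first reduce to the case $G$ simply connected; by Theorem \ref{thm:Laurentrings} (the $m=0$, no inverted variables special case) we know $G(\C[x]) = E(\C[x])$ for simply connected $G$ without $SL_2$ factor, and more generally $G(R[x_1,\dots,x_m]) = G(R) \cdot E(R[x_1,\dots,x_m])$ for regular $R$ by the analogous relative statement. The point is that one wants to upgrade ``$G(\C[x]) = E(\C[x])$'' to the assertion that over an arbitrary regular $\C$-algebra $R$, an element $g \in G(R)$ lies in $E(R)$ precisely when it is $\mathbb{A}^1$-connected to the identity.

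The forward inclusion is the easy direction and essentially the content of the discussion at the start of Section \ref{sec: nullhom}: if $g = U_1^-(x)\cdots U_K^\pm(x) \in E(R)$, then writing each unitriangular factor as $\exp(\log U_i)$ with $\log U_i$ a polynomial (finite sum, since $\log(I+N)$ truncates for nilpotent $N$) and rescaling the nilpotent parts by a parameter $t$ produces an explicit $h(t) \in G(R[t])$ (indeed, using $t$-linear conjugation one can even arrange polynomiality) with $h(0) = I$ and $h(1) = g$. So $E(R)$ is contained in the right-hand set.

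For the reverse inclusion --- the main obstacle --- suppose $g \in G(R)$ and $h \in G(R[x])$ with $h(0) = 1$, $h(1) = g$. The strategy is to use homotopy invariance of the functor $K_1^G = G/E$ on regular rings: since $R$ is a regular $\C$-algebra, so is $R[x]$, and one wants $G(R[x])/E(R[x]) \cong G(R)/E(R)$ via evaluation at $0$. This is exactly the type of statement underlying Theorem \ref{St-E=G0} as cited (Stavrova's ``Theorem 1.3, Lemma 3.3''); concretely Lemma 3.3 should say that $h(0)^{-1} h(1) \in E(R)$ whenever $h \in G(R[x])$, which gives $g = h(0)^{-1}h(1) \cdot h(0) = \dots \in E(R)$ once we also know $h(0) = 1 \in E(R)$ trivially. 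The non-trivial inputs here are: (i) the local-global / patching principle of Quillen--Suslin type that reduces homotopy invariance to the local case; (ii) the stability results (cf.\@ Theorem \ref{Stein}, Theorem \ref{theorem:tavgen}) ensuring that $E$ behaves well under stabilization so that ``without $SL_2$ factor'' is the only obstruction; and (iii) the analysis of relative elementary subgroups $E(R[x], (x))$. The hypothesis ``no $SL_2$ factor'' is essential and cannot be removed: Lemma \ref{cohn} exhibits a topologically nullhomotopic, hence (as one checks) even $\mathbb{A}^1$-nullhomotopic matrix in $SL_2(\C[z,w])$ that is not elementary.

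Finally, the ``In particular'' clause is immediate: take $R = \C[X]$ for a smooth affine variety $X$, which is a regular $\C$-algebra; an algebraic map $f\colon X \to G(\C)$ is the same as an element $f \in G(\C[X])$, and an $\mathbb{A}^1$-nullhomotopy is precisely an $h \in G(\C[X][x]) = G(\C[X \times \mathbb{A}^1])$ with $h(0) = I$, $h(1) = f$; so $f \in E(\C[X])$, i.e.\@ $f$ is a product of unitriangular matrices over $\C[X]$. I expect essentially no difficulty in this last step; the entire weight of the proof rests on citing and correctly assembling Stavrova's homotopy-invariance theorem together with the stability theorems quoted earlier, and on recording the (standard) explicit nullhomotopy for the forward direction.
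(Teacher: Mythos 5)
Your proposal matches the paper's treatment: the survey gives no independent proof of Theorem \ref{St-E=G0} but simply cites Stavrova's homotopy invariance of $G/E$ over regular rings (Theorem 1.3 and Lemma 3.3 of \cite{MR3189425}), which is exactly the key input you invoke, together with the easy forward inclusion obtained by scaling the unipotent factors and the immediate specialization $R=\C[X]$ for the ``in particular'' clause. The only quibble is your phrase ``topologically nullhomotopic, hence \ldots $\mathbb{A}^1$-nullhomotopic'' (that implication is false in general, as the paper itself stresses), but it occurs only in your sharpness remark and is harmless because the Cohn matrix is indeed $\mathbb{A}^1$-nullhomotopic, e.g.\ via $t\mapsto\bigl(\begin{smallmatrix}1+tzw & tz^{2}\\ -tw^{2} & 1-tzw\end{smallmatrix}\bigr)$.
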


To examplify the implications of this result to
the factorization problem in the algebraic case we state here
the following corollary. Recall that the Koras-Russel threefold is given as $KR= \{ (x,y,s,t) \in \C^4 : x+x^2y + s^2+t^3=0 \}$. It is diffeomorphic to $\R^6$ (hence topologically contractible), not algebraically isomorphic to $\C^3$, symplectomorphic to $\R^6$ and it is unknown whether it is biholomorphic to $\C^3$. Recently it was shown by Adrien Dubouloz and Jean Fasel \cite{MR3734108} that $KR$ is $A^1$-contractible. 

\begin{Cor}
  Let $G$ be a semisimple Chevalley group without $SL_2$ factor. Then any polynomial map from the Koras-Russell threefold $KR$ to $G(\C)$ is a product of polynomial unitriangular matrices. 
\end{Cor}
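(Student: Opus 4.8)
The plan is to derive the statement from Theorem~\ref{St-E=G0} together with the $\mathbb{A}^1$-contractibility of $KR$ proved by Dubouloz and Fasel. First I would check that $KR$ is a \emph{smooth} affine algebraic threefold, so that it is an admissible input for the ``in particular'' part of Theorem~\ref{St-E=G0}: the gradient of $P = x + x^2 y + s^2 + t^3$ is $(1 + 2xy,\ x^2,\ 2s,\ 3t^2)$, and on the zero locus $x^2 = 0$ forces $x = 0$, whence the first coordinate equals $1 \neq 0$; hence $P$ has no singular point on $KR$. Consequently $R := \C[KR]$ is a regular $\C$-algebra, and by Theorem~\ref{St-E=G0} it suffices to show that every polynomial map $f \colon KR \to G(\C)$ is $\mathbb{A}^1$-nullhomotopic, i.e.\@ that there is an algebraic $h \colon \C \times KR \to G(\C)$ with $h_0 \equiv I$ and $h_1 = f$; equivalently, that $E(R) = G(R)$.

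Next I would invoke $\mathbb{A}^1$-contractibility. By \cite{MR3734108}, $KR$ is contractible in the Morel--Voevodsky $\mathbb{A}^1$-homotopy category, so the pointed set $[KR, G]_{\mathbb{A}^1}$ of $\mathbb{A}^1$-homotopy classes of maps into $G$ is a single point (using $G(\C) = E(\C)$ to see that $[\operatorname{Spec}\C, G]_{\mathbb{A}^1}$ is trivial). The bridge to our problem is precisely Theorem~\ref{St-E=G0}: for $G$ semisimple Chevalley without $SL_2$ factor and $R$ a regular $\C$-algebra, the identity $E(R) = \{\, g \in G(R) : \exists\, h(x) \in G(R[x]),\ h(0) = 1,\ h(1) = g \,\}$ exhibits $G(R)/E(R)$ as the set of \emph{naive} $\mathbb{A}^1$-homotopy classes of pointed maps $\operatorname{Spec} R \to G$, and for this class of groups over a smooth affine base this coincides with the genuine $[\operatorname{Spec} R, G]_{\mathbb{A}^1}$ — the underlying reason being the $\mathbb{A}^1$-invariance and good sheaf-theoretic behaviour of $R \mapsto G(R)/E(R)$ on regular rings. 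Applying this with $R = \C[KR]$ and using that $[KR, G]_{\mathbb{A}^1}$ is a point gives $E(\C[KR]) = G(\C[KR])$, as desired. (If one prefers to argue with an explicit homotopy, one would only obtain directly that $f$ is naively homotopic to the constant map $c = f(p_0) \in G(\C)$ at the distinguished point $p_0 = (0,0,0,0)$; then $fc^{-1}$ is $\mathbb{A}^1$-nullhomotopic, hence lies in $E(\C[KR])$ by Theorem~\ref{St-E=G0}, while $c \in G(\C) = E(\C) \subseteq E(\C[KR])$ because $\C$ is algebraically closed, so $f = (fc^{-1})\,c \in E(\C[KR])$.)

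The main obstacle is exactly this passage from ``genuine'' $\mathbb{A}^1$-contractibility to the \emph{naive} polynomial homotopy $h \in G(R[x])$ demanded by Theorem~\ref{St-E=G0}. The Dubouloz--Fasel argument does not produce a naive algebraic deformation retraction of $KR$ onto a point — indeed no such retraction is known to exist — so one cannot simply compose $f$ with a contraction of $KR$. What rescues the argument is that Theorem~\ref{St-E=G0} already encodes the statement that, for this class of groups, $G(-)/E(-)$ is represented by pointed $\mathbb{A}^1$-homotopy classes on smooth affine $\C$-schemes; once that theorem is granted, ``$\mathbb{A}^1$-contractible'' can be used as a black box. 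Everything else — smoothness of $KR$, algebraic closedness of $\C$ to absorb the constant factor, and the reduction of the factorization problem to $\mathbb{A}^1$-nullhomotopy — is routine.
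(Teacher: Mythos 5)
Your argument is correct and is essentially the paper's own (implicit) derivation: smoothness of $KR$, the Dubouloz--Fasel $\mathbb{A}^1$-contractibility, and Theorem \ref{St-E=G0} together give $E(\C[KR]) = G(\C[KR])$, hence the factorization. Your bridge from genuine $\mathbb{A}^1$-contractibility to the naive polynomial homotopy demanded by Theorem \ref{St-E=G0} is the right idea, but note it rests on the affine representability of $G(-)/E(-)$ by $\mathbb{A}^1$-homotopy classes on smooth affine schemes (a consequence of Stavrova's $\mathbb{A}^1$-invariance via the Asok--Hoyois--Wendt machinery), which is an additional input deserving its own citation rather than something literally ``encoded'' in Theorem \ref{St-E=G0} as stated.
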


We conclude this section by a result of Wilberd van der Kallen \cite{MR0689383}.   Although every polynomial map $p \colon \C^m \to SL_n(\C)$ for fixed $n \ge 3, m \ge 1$ is a product of finitely many unitriangular matrices with polynomial entries, the number of factors needed is not uniformly bounded. 

\section{Continuous results} \label{sec: cont}

Vaserstein was a great promoter for the problem of factorization for continuous matrices. Jointly with William Thurston \cite{MR0855453}, they solved the first particular case by showing that every continuous map from $\R^3$ to $SL_n(\C)$ is a product of continuous unitriangular matrices. Later Vaserstein settled the problem for continuous maps from any finite dimensional topological space to $SL_n(\C)$.

\begin{Thm}[Vaserstein \cite{MR0947649}*{Theorem 4}] \label{vaserstein}
    For any natural number $n$ and an integer $d \ge 0$ there exists a natural
    number $L = L(n, d)$ such that for any finite dimensional normal topological space $T$ of dimension $d$, every nullhomotopic continuous map $f \colon T \to SL_n(\C)$ can be written as a product of no more than $L$ unitriangular matrices with entries in $\mathcal{C}(T)$. 
\end{Thm}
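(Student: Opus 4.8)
The plan is to reduce the statement to a combination of three ingredients: (1) a local/patching argument controlled by the covering dimension $d$; (2) the fact that over a contractible (or low-dimensional) piece the factorization problem is governed by nullhomotopy, which we already know from the Bass-stable-rank-one philosophy and Milnor's theorem; and (3) a bookkeeping step that turns "finitely many factors on each of finitely many pieces" into a single uniform bound $K = K(n,d)$. First I would invoke the standard fact about normal topological spaces of covering dimension $d$: one can find a finite-dimensional refinement so that $T$ is covered by $d+1$ closed sets $T_0, \dots, T_d$, each of which is a disjoint union of sets lying inside arbitrarily small members of a prescribed open cover. This is the classical "dimension $\le d$ means $d+1$ colors suffice" decomposition, and it is exactly the device Vaserstein uses. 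The point of shrinking the pieces is that on each small piece the restriction $f|_{T_i}$ is nullhomotopic \emph{relative to the ambient nullhomotopy}, hence can be deformed to a map into a small neighborhood of the identity.

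Next I would handle a single piece. On a set $S$ that maps into a small ball around $I$ in $SL_n(\C)$, one writes $f|_S = \exp(\text{small})$ and uses the fact that any matrix near $I$ admits a bounded-length elementary factorization with entries depending continuously on the matrix --- this is an explicit local statement (an "approximate LULU" near the identity, or equivalently the observation that $SL_n(\C)$ near $I$ is swept out by a bounded number of one-parameter unipotent subgroups whose parameters are continuous functions of the matrix). Thus $f|_{T_i}$ is a product of at most $L_0 = L_0(n)$ unitriangular matrices over $\mathcal{C}(T_i)$. The subtle point is gluing: one must extend the factors on $T_i$ to factors on all of $T$ (using normality and Tietze) while correcting the error made on the overlaps. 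This is done inductively over $i = 0, 1, \dots, d$: having factorized $f$ correctly over $T_0 \cup \dots \cup T_{i-1}$, multiply by a correction supported near $T_i$, which by the same local argument costs another block of $O(n)$ factors, and observe the correction does not disturb what was already achieved outside a controlled set. Each of the $d+1$ stages contributes a bounded number of factors, so the total is $K \le (d+1) \cdot C(n)$ for an explicit $C(n)$.

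The main obstacle --- and the genuinely technical heart of the argument --- is the inductive gluing: one needs the partial factorization over $T_0 \cup \dots \cup T_{i-1}$ to be \emph{flexible enough} that multiplying by a correction term near $T_i$ does not destroy it, which forces the correction to be \emph{itself} nullhomotopic and supported in an arbitrarily thin neighborhood, so that its own factorization (again bounded-length) can be arranged to equal $I$ off that neighborhood. Making this precise requires the relative/parametric version of the local factorization near $I$ --- factorizing a family of matrices that equals $I$ on a closed subset --- and this is where normality of $T$ and a partition-of-unity-style cutoff in the exponential coordinates are used. Everything else (the dimension-theoretic decomposition, the passage from nullhomotopy to "maps into a small ball", counting the factors) is routine once this relative local lemma is in hand; I would state that lemma separately and prove the theorem by the $d+1$-step induction just described.
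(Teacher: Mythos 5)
First, note that the survey does not prove Theorem \ref{vaserstein}: it is quoted from Vaserstein's paper \cite{MR0947649}, so your proposal has to be measured against that original argument and against its own internal completeness, and on the latter count there is a genuine gap. The entire content of the theorem is the uniform bound $L(n,d)$ (independent of $f$ and of $T$ beyond its dimension), and your sketch defers exactly this to a ``relative local lemma'' that you declare routine. It is not. Concretely: (a) in your local step, smallness of the members of the colored decomposition is measured in $T$, so to know that $f$ has small oscillation on a piece you must refine by $f^{-1}$ of a cover of $SL_n(\C)$; then $f$ on a small piece lies near some constant value $g_0$, not near $I$. This is repairable ($g_0$ is a constant matrix and hence a bounded product of constant elementary matrices), but then neither the local step nor the Tietze extension of the factors uses the nullhomotopy hypothesis at all. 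Since the theorem is false without that hypothesis (any product of unipotent factors is nullhomotopic), a proof must make visible where it enters and why the construction fails without it; your sketch never does. The alternative you offer -- deforming $f|_{T_i}$ into a neighborhood of $I$ ``relative to the ambient nullhomotopy'' -- does not factor $f|_{T_i}$ either: the deformation itself must be factored, and the number of small steps needed to traverse a given homotopy depends on the modulus of continuity of that homotopy, not only on $(n,d)$; this is precisely the trap that makes the uniform bound nontrivial. (b) In the gluing step, the discrepancy to be absorbed is a continuous map into $SL_n(\C)$ that equals $I$ on a closed set and is unrestricted elsewhere; writing it as a uniformly bounded product of unitriangular factors that are $\equiv I$ where it is $I$ is a relative form of the whole theorem, not a local statement, and your cutoff in exponential coordinates only applies when the discrepancy stays in a neighborhood of $I$ where $\log$ is defined -- nothing in the construction ensures this. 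Until (a) and (b) are genuinely addressed, the count $K\le (d+1)\,C(n)$ is unsupported.

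For comparison with the source: Vaserstein's published argument is not a cover-and-glue argument of the kind you describe. As the survey's introduction indicates, it is organized around the (topological) stable rank of $\mathcal{C}(T)$, which he bounds in terms of $d$, together with bounded reduction of unimodular rows by addition operations, Whitehead-type identities, and induction on the matrix size, the homotopy hypothesis being what removes the remaining obstruction; so your claim that the $d+1$-colored decomposition ``is exactly the device Vaserstein uses'' is not accurate. Also, Milnor's theorem (Theorem \ref{milnor}), which you invoke as an ingredient, gives no bound on the number of factors and applies to Banach algebras, not to $\mathcal{C}(T)$ for general (noncompact) $T$, so it cannot supply the quantitative part of the statement.
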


Contrary to the algebraic setting, there exists a uniform bound. However, we are not aware of any concrete value of $L(n,d)$ except $L(n,1)=4$ for all $n \ge 1$,  cf.\@ Theorem \ref{bsr1-SL-Sp} and Example \eqref{example-vaserstein} in Section \ref{subsec:bsr1}. It is worth mentioning that Theorem \ref{theorem:tavgen} implies 
\[
    L(n+1,d) \le L(n,d) \le L(2,d) \text{ for all } n \ge 2, d \ge 0. 
\]
Keith Dennis and Vaserstein \cite[Theorem 20]{MR0961333} also showed that $\lim_{n \to \infty} L(n,d) \le 6$. 

Inspired by the work of Brudnyi and Amol Sasane \cite{BrudSasa}, one can deduce a lower bound for $L(n,d)$. In fact using the proof of Theorem 5.3 (replacing elementary by unitriangular) in Christopher Phillips \cite{MR1305876} together with the proof of Lemma 2.1 in Brudnyi \cite{MR4357335}, one gets the following lower bound for the number $L$ in Theorem \ref{vaserstein}. Alternatively, one can also directly combine Theorem 2.3 of \cite{MR1305876} and Lemma 2.1 of \cite{MR4357335}.
\begin{Thm}  \label{lowerbound-sln}  
    \[
        L(n,d) \ge \max \left\{ 4, \left\lfloor \frac{d - 2}{n^2 -1} \right\rfloor - 1 \right\}.
    \]
\end{Thm}

\medskip

The following lower bound for the (at least) two dimensional case seems to be new. 
\begin{Lem}
    $L(2,d) \ge 5$ for all $d \ge 2$.
\end{Lem}
\begin{proof}
    We give a two dimensional example where four factors are not sufficient. Since
    $L(n,d+1) \ge L(n,d)$ the claim of the Lemma follows. The latter inequality follows  from the fact that 
     if a nullhomotopic continuous map $f \colon T \to SL_n(\C)$ cannot be written as a product of  $L$ unitriangular matrices with entries in $\mathcal{C}(T)$, then the map $F \colon T \times \R \to SL_n(\C)$ defined by $F(t,x) = f(t)$
     cannot be written as a product of  $L$ unitriangular matrices with entries in $\mathcal{C}(T \times \R)$.
    
    Before we describe the example, let us give some general consideration on factorization of continuous map into four factors. 
    Given a continuous map $f \colon X \to SL_2(\C)$, 
    suppose that there were continuous maps $g_1, g_2, g_3, g_4 \colon X \to \C$ such that
\begin{align*} 
    f= 
    \begin{pmatrix}    a & b\\ c & d    \end{pmatrix}
    = \begin{pmatrix}
        1 & 0\\ g_1 & 1
    \end{pmatrix}
    \begin{pmatrix}
        1 & g_2\\ 0 & 1
    \end{pmatrix}
    \begin{pmatrix}
        1 & 0\\ g_3 & 1
    \end{pmatrix}
    \begin{pmatrix}
        1 & g_4\\ 0 & 1
    \end{pmatrix}.
\end{align*}
Bring the first and the fourth factor to the left hand side, and carry out the multiplications 
\begin{align*} 
    \begin{pmatrix}
        a & b - a g_4\\ c - a g_1 & - g_4 (c - a g_1) + d - b g_1
    \end{pmatrix}
    =\begin{pmatrix}    1 + g_2 g_3 & g_2\\ g_3 & 1    \end{pmatrix}.
\end{align*}
In case $a \neq 0$, the first three equations read
\begin{align*}
    a =  1 + g_2 g_3,  \quad 
    g_4  =   \frac{1}{a} (b - g_2),  \quad 
    g_1  =   \frac{1}{a} (c - g_3), 
\end{align*}
and the fourth equation follows from the other three. If moreover $a \neq 1$,
\begin{align} \label{g_2}
    &\text{ any map } g_2 \colon \{ x \in X \mid a(x) \notin \{0,1\} \} \to \C^* \\ &\text{ is equivalent to a factorization in this part of } X. \nonumber
\end{align}

The fiber of the fibration $f^* \Phi_4$ (cf.\@ \eqref{PHI_K}) over $\{ x \in X \mid a(x) \notin \{0,1\} \}$ is $\C^{*}$, where
\begin{align*}
    \Phi_4 \colon \C^4 \to SL_2(\C), \,
    (z_1, z_2, z_3, z_4) \mapsto 
    \begin{pmatrix}
        1 & 0\\ z_1 & 1
    \end{pmatrix}
    \begin{pmatrix}
        1 & z_2\\ 0 & 1
    \end{pmatrix}
    \begin{pmatrix}
        1 & 0\\ z_3 & 1
    \end{pmatrix}
    \begin{pmatrix}
        1 & z_4\\ 0 & 1
    \end{pmatrix}.
\end{align*}
When $a = 1$, the fiber is the cross of axis $g_2 g_3 =0$. To get a factorization we must be able to extend $g_2$ to the whole $\mathbb{C}^2$ in a way that $g_1, g_3$ and $g_4$ are still well-defined.  

When $a = 0$, then the equations read
\begin{align*}
    1 + g_2 g_3 = 0, \quad g_2 = b, \quad g_3 = c, \quad 1 = - c g_4 + d - b g_1.
\end{align*}
Notice that $g_2$ and $g_3$ are prescribed as $b$ and $c$, respectively, and the fiber of $f^* \Phi_4$ here is $\C$ given by the last equation. 
\medskip

    Now we start to construct the example. Recall the holomorphic example from \cite{MR4678654}: Consider the following holomorphic mapping $f \colon \C^2 \to SL_2(\C)$
\begin{align*}
    f(z, w) = \begin{pmatrix}    (zw-1)(zw-2) & (zw-1)z + (zw-2)z^2 \\ h_1(z,w) & h_2 (z,w)   \end{pmatrix},
\end{align*}
where the functions in the second row are chosen such that $f(z,w)$ has determinant $1$. The existence of such polynomial functions follows from Hilbert's Nullstellensatz, or if one is looking for holomorphic functions from a standard application of Theorem B. For this observe that the functions in the first row have no common zeros.

In order to give an example on a two dimensional topological space, we restrict the example $f$ above to the following subset of $\C^2$
\[
    T = \{ (z,w) \in \C^2 : \exists \,\, t \in   [0,1]: zw = 2 - t, \lvert z \rvert = 1 \} \cong S^1 \times [0,1].
\]

Then on $\{ zw = 1, \lvert z \rvert = 1 \} \cong S^1 \times \{ 1 \}$, $g_2(z,w) = -z^2$ and on $\{ zw = 2, \lvert z \rvert = 1 \} \cong S^1 \times \{ 0 \}$, $g_2(z,w) = z$. Since on $ S^1 \times (0,1)$ $a$ is neither $0$ nor $1$, by \eqref{g_2} $g_2 \colon S^1 \times [0,1] \to \C^*$ induces a family of continuous self-maps of $S^1$
\begin{align*}
    F \colon S^1 \times [0,1]  \to S^1, (\theta, t) \mapsto g_2(\theta, (2-t)/\theta) / \lvert g_2(\theta, (2-t)/\theta)\rvert, 
\end{align*}
connecting  between $F(\theta,0) = \theta$ and $F(\theta,1)= -\theta^2$. Since these two self-maps of $S^1$ have different degrees, we find a contradiction. 
\end{proof}
\smallskip

\noindent {\bf Example}
The matrix of Cohn (Lemma \ref{cohn}), which cannot be algebraically factorized, can be factorized as a product of four continuous unitriangular matrices. 
First note that we can build a continuous section on the complement of $zw = 0$
\begin{align} \label{D=0}
    g_1^{(0)} = -w/z,\quad  g_2^{(0)} = z^2, \quad  g_3^{(0)} = -w^2, \quad  g_4^{(0)} = 0, 
\end{align}
while on the complement of $zw = -1$ we write down the continuous section
\begin{align} \label{D=-1}
    &g_1^{(-1)} = -w^2 \frac{1 + \lvert w \rvert^{-3/2}}{1 + zw}, \quad g_2^{(-1)} = \frac{z}{w}\lvert w \rvert^{3/2}, \quad \\ & g_3^{(-1)} = w^2 \lvert w \rvert^{-3/2}, \quad g_4^{(-1)} = z \frac{z - \lvert w \rvert^{3/2}/w}{1 + zw}. \nonumber
\end{align}
Based on the observation \eqref{g_2} above, for a fixed $D=a-1 \in \C \setminus \{ -1, 0 \}$, the restriction of any continuous section to $zw = D$ is a continuous map from $\C^*$ to $\C^*$ (given by $g_2$), where we choose $z$ as the parameter on $\{ zw = D \} \cong \C^*$. The restrictions of the two sections above to $zw = D$ both have degree $2$. Therefore $g_2^{(0)}$ can be deformed to $g_2^{(-1)}$ by homotopy for each fixed $D$. In order to build a global section, we will glue these two sections together on the overlap $U = \{ -0.7 < \Re(zw) < -0.3 \}$ of their domains of definition. Since on this domain the real part of $\lvert w \rvert^{3/2} /(zw)$ is always positive, a deformation retraction $c$ of the right half plane to the point $1$ gives a continuous homotopy 
\[
    h \colon [0,1] \times U \to \C^*, (t, z, w) \mapsto z^2 c(t, \lvert w \rvert^{3/2} /(zw))
\]
between the two sections on $U$. We take a cutoff function $\chi \colon \R \to [0,1]$ such that $\chi \equiv 0$ on $(- \infty, -0.5]$ and $\chi \equiv 1$ on $[-0.4, \infty)$. Then the composition $h( \chi(\Re(zw),z,w) \colon U \to \C^*$ gives a continuous gluing of the two sections \eqref{D=0} and \eqref{D=-1}, hence a global factorization. 

Let us now explain why the example of Cohn cannot be factorized as four holomorphic unitriangular matrices. Suppose that such a factorization exists. On $D \neq -1$ we have $g_2 g_3 = zw$, which forces $g_2$ to be of the form either $g_2 = e^{f(z,w)}$ nowhere vanishing, or $g_2 = z e^{f(z,w)}$ or $g_2 = w e^{f(z,w)}$ or $g_2 = zw e^{f(z,w)}$. Thus for a fixed $D \in \C \setminus \{ -1, 0 \}$ the degree of the map $\{ zw = D \} \to \C^*$ is 0, 1 or $-1$. Since $g_2 = z^2$ on $D = -1$, we conclude that for $\lvert D+1 \rvert < \varepsilon$ the degree of the map $\{ zw = D \} \to \C^*$ would be $2$. This
contradicts the existence of a global section, thus a factorization into 4 holomorphic factors.

\begin{Rem}
    The continuous section that we constructed does not avoid the singularity set of $f^* \Phi_4$, namely $g_2$ and $g_3$ are both zero over $\{ w = 0 \} \subset \{ a = 1 \}$, meaning the section passes through the singularity of the fiber, the zero point of the cross of axis $g_2 g_3 =0$. In fact, any continuous factorization into four factors has to meet the singularity set of the map $f^* \Phi_4$. Indeed, if it could avoid the singularity set, the Oka principle for  stratified elliptic submersions would provide us a holomorphic factorization into 4 factors. 
\end{Rem}

\smallskip

The corresponding problem for continuous maps from any finite dimensional topological space to $Sp_{2n}(\C)$ has been settled by 
Ivarsson, Kutzschebauch and L\o w. 
\begin{Thm}[\cite{MR4078081}*{Theorem 3}]\label{IKL-Sp-Cont}
    For any natural number $n$ and an integer $d \ge 0$ there exists a natural
    number $L_{sp} = L_{sp}(n, d)$ such that for any finite dimensional normal topological space $T$ of dimension $d$, every nullhomotopic continuous map $f \colon T \to Sp_{2n}(\C)$ can be written as a product of no more than $L_{sp}$ matrices of the form $\begin{pmatrix} I_n & 0 \\ C & I_n \end{pmatrix}$ and $ \begin{pmatrix} I_n & B \\ 0 & I_n \end{pmatrix}$ where $B,C$ are $n \times n$ symmetric matrices with entries in $\mathcal{C}(T)$.
\end{Thm}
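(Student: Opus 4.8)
The plan is to imitate Vaserstein's proof of Theorem~\ref{vaserstein}, replacing the ordinary elementary operations throughout by the symplectic ones $\begin{pmatrix} I_n & 0 \\ C & I_n \end{pmatrix}$ and $\begin{pmatrix} I_n & B \\ 0 & I_n \end{pmatrix}$ with $B,C$ symmetric, which I will call \emph{admissible factors}, and to induct on $n$. The base case $n=1$ is nothing new: the symplectic form on $\C^2$ is the determinant, so $Sp_2(\C)=SL_2(\C)$, a $1\times1$ symmetric matrix is a scalar, and the admissible factors are exactly the elementary factors of $SL_2$; hence $L_{sp}(1,d)=L(2,d)$, which is finite by Theorem~\ref{vaserstein}. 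For the inductive step it suffices to produce a constant $N=N(n,d)$ so that every nullhomotopic $f\colon T\to Sp_{2n}(\C)$ can be written $f=E\cdot f'$ with $E$ a product of at most $N$ admissible factors and $f'$ taking values in a copy of $Sp_{2(n-1)}(\C)$, block-embedded compatibly with a fixed standard symplectic basis so that its admissible factors remain admissible for $Sp_{2n}$. Then $f'=E^{-1}f$ is again nullhomotopic — a product of nullhomotopic maps into the topological group $Sp_{2n}(\C)$, since the inverse of a product of admissible factors is again one — and the induction hypothesis yields $L_{sp}(n,d)\le N(n,d)+L_{sp}(n-1,d)<\infty$.

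To obtain the destabilization $f=E\cdot f'$ I would normalize a hyperbolic pair. Let $v=f\cdot e_1\colon T\to\C^{2n}\setminus\{0\}$ be the first column; a nullhomotopy $h$ of $f$ produces the homotopy $(t,x)\mapsto h_t(x)e_1$, so $v$ is nullhomotopic as a map into $\C^{2n}\setminus\{0\}\simeq S^{4n-1}$. The crux is the claim that there exist $N_0=N_0(n,d)$ and a product $E_1$ of at most $N_0$ admissible factors over $\mathcal{C}(T)$ with $E_1 v\equiv e_1$. Granting this, $g:=E_1 f$ fixes $e_1$ and preserves the symplectic form, so the $(n+1)$-st coordinate of the partner column $g\cdot e_{n+1}$ equals $1$; a second reduction of the same kind, applied to $g\cdot e_{n+1}$ and using only the admissible factors that fix $e_1$ — the upper ones, and the lower ones whose block $C$ has vanishing first row and column — produces $E_2$ with $E_2 g$ fixing the whole hyperbolic plane $H=\langle e_1,e_{n+1}\rangle$ pointwise. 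Since $f':=E_2 g$ then preserves the symplectic complement $H^\perp$, it decomposes as $\mathrm{id}_H\oplus(f'|_{H^\perp})$ with $f'|_{H^\perp}\in Sp(H^\perp)\cong Sp_{2(n-1)}(\C)$, which is the required form with $E=(E_2 E_1)^{-1}$.

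For the key claim I would follow Vaserstein's reduction of unimodular rows over finite-dimensional spaces, in its symplectic incarnation. First, a nowhere-zero continuous section $v$ of $T\times\C^{2n}$ is automatically ``symplectically unimodular'': the affine space $\{w: v(x)^{T}Jw=1\}$, with $J$ the matrix of the form, is nonempty and varies continuously, hence admits a continuous selection by a partition of unity. Next, writing $v=(p,q)$ in top and bottom halves, one uses admissible factors $(p,q)\mapsto(p+Bq,q)$ to make $p$ nowhere vanishing, then $(p,q)\mapsto(p,Cp+q)$ with $Cp=-q$ to reach $(p,0)$, and finally a short alternating string of admissible factors to drive the nowhere-zero map $p$ to $e_1$; each of these moves is performed locally on the pieces of a finite open cover $T=W_0\cup\dots\cup W_d$ of multiplicity $\le d+1$ (which exists because $\dim T=d$), refined so that $v$ is homotopically trivial, hence reducible on the contractible local model, on each piece, and the local reductions are glued over overlaps by subordinate partitions of unity into a single product of admissible factors whose length is bounded in terms of $n$ and $d$ alone.

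The main obstacle is precisely this last uniformity statement. One must control simultaneously the combinatorial size of the cover — which is what the finite covering dimension $d$ buys, through multiplicity $\le d+1$ — and the word length of the local reductions together with the interpolations used to patch them, and one must verify that the restricted symplectic operations, altering a column $(p,q)$ only through $(p,q)\mapsto(p+Bq,q)$ and $(p,q)\mapsto(p,Cp+q)$ with $B,C$ symmetric, stay flexible enough to realize all the needed reductions (for instance that a suitably chosen symmetric $B$ makes $p+Bq$ nowhere zero, and that a bounded string of such moves carries a nowhere-zero $\C^n$-valued map to $e_1$). The nullhomotopy hypothesis on $f$ enters here: it is what makes the obstruction in $\pi_{4n-1}(S^{4n-1})$, nontrivial once $d\ge 4n-1$, vanish for the column $v$, so that the local reductions can be globalized. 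Once the key claim is established with a bound depending only on $n$ and $d$, the induction above finishes the proof and yields an explicit recursive bound for $L_{sp}(n,d)$ in terms of the function $L$ of Theorem~\ref{vaserstein}.
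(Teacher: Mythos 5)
Your overall skeleton (normalize a hyperbolic pair by admissible factors, split off $\mathrm{id}_H\oplus f''$ with $f''\in Sp_{2(n-1)}$, induct down to $Sp_2=SL_2$ and quote Theorem \ref{vaserstein}) is pointwise-algebraically sound, and the bookkeeping $L_{sp}(n,d)\le N(n,d)+L_{sp}(n-1,d)$ would be fine \emph{if} the two inputs you need were available. They are not, and the two missing inputs are exactly the substance of the theorem. First, the induction hypothesis requires the destabilized block $f''\colon T\to Sp_{2(n-1)}(\C)$ to be nullhomotopic \emph{in} $Sp_{2(n-1)}(\C)$, whereas your argument only gives that $\iota\circ f''=E^{-1}f$ is nullhomotopic in $Sp_{2n}(\C)$. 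Since $Sp_{2n}(\C)/Sp_{2(n-1)}(\C)\simeq S^{4n-1}$, the inclusion is an isomorphism on $\pi_k$ only for $k\le 4n-3$, and the kernel is genuinely nontrivial from $k=4n-2$ on: for instance $\pi_6(Sp_2(\C))\cong\pi_6(S^3)\cong\mathbb{Z}/12$ maps to $\pi_6(Sp_4(\C))=0$. So already for $T=S^6$ and $n=2$ your reduction may produce a block that is essential in $Sp_2(\C)$, hence (nullhomotopy being necessary) not factorable there at all, and the induction cannot proceed. To rescue the scheme you would have to show that the destabilizing word $E$ can be \emph{chosen} so that the remaining block is nullhomotopic in the smaller group; this obstruction-theoretic control of the homotopy class of what is left after an elementary reduction is precisely the delicate point in Vaserstein-type arguments, and it is absent from your sketch. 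Note that Tavgen's reduction (Theorem \ref{theorem:tavgen}) goes in the opposite direction (bounds for large rank in terms of rank-one data) exactly because this downward step is problematic.

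Second, even granting the homotopy issue, the key claim — that a nowhere-vanishing, nullhomotopic column $v$ can be carried to $e_1$ (and then the partner column to $e_{n+1}$ by the restricted factors fixing $e_1$) by a number of admissible factors bounded by a function of $(n,d)$ alone — is not proved; you explicitly defer it. Gluing local reductions over a cover of multiplicity $d+1$ by partitions of unity is not a routine step: one must produce, on overlaps, \emph{homotopies through words of bounded length in the restricted factor class} (symmetric blocks only, and in the second reduction with first row and column of $C$ vanishing), and convert them into a single global word of controlled length; nothing in the sketch explains how the interpolation stays inside the admissible class or why its length is independent of $T$. Since the uniform bound is the whole content of the statement (the qualitative part for compact $T$ would already follow from Theorem \ref{IKL-BA}), the proposal as written establishes only the trivial base case $L_{sp}(1,d)=L(2,d)$. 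For comparison, the quoted source \cite{MR4078081} does not argue by this rank-by-rank destabilization of columns; it obtains the continuous statement alongside its Banach-algebra factorization (Theorem \ref{IKL-BA}) by adapting Vaserstein's method for $SL_n$ to the symplectic elementary factors, which is where the uniform bound and the homotopy bookkeeping are actually carried out.
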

Since the matrices used in Theorem \ref{IKL-Sp-Cont} are special types of \eqref{U-minus} and \eqref{U-plus}, we see that every nullhomotopic element of $Sp_{2n}(\mathcal{C}(T))$ lies in $Ep_{2n}(\mathcal{C}(T))$. 
\smallskip

For the number of factors, if the Bass stable rank is one, $L_{sp}(n,1)=4$ for all $n \ge 1$ as in the $SL_n$ case. Theorem \ref{theorem:tavgen} implies 
\[
    L_{sp}(n+1,d) \le L_{sp}(n,d) \le L_{sp}(1,d) =  L(2,d) \text{ for all } n \ge 1, d \ge 0. 
\]
Furthermore, $L_{sp}(n,d)$ has an asymptotical upper bound for sufficiently large $n$ as in the $SL_n$ case. We will show this in a forthcoming paper. 

As in Theorem \ref{lowerbound-sln}, one gets the following lower bound for $L_{sp}$. 
\begin{Thm}
    \[
        L_{sp}(n,d) \ge \max \left\{ 4, \left\lfloor \frac{d - 2}{4n^2 -1} \right\rfloor - 1 \right\}.
    \]
\end{Thm}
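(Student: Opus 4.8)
The plan is to run the same argument that establishes the $SL_n$ lower bound in the preceding theorem, now adapted to $Sp_{2n}(\C)$ with the symplectic-generator count in place of the $SL_n$ count. The two ingredients are (i) the lower bound for the number of factors coming from the proof of Theorem~5.3 in Phillips~\cite{MR1305876}, which produces, for each sphere $S^d$ of sufficiently high dimension, a nullhomotopic map into the relevant group that cannot be written as a product of too few elementary (unitriangular, resp.\@ symplectic elementary) factors, and (ii) the cohomological dimension-counting inequality in Proposition~4.1 of \cite{MR4678654}, whose proof, as the authors note in the sentence preceding the statement, goes through verbatim in the symplectic case. I would first record that the map $\Phi_K^{sp}\colon (\C^{n(n+1)/2})^K \to Sp_{2n}(\C)$ sending a $K$-tuple of symmetric matrices to the corresponding product of elementary symplectic matrices of the form $\begin{pmatrix} I_n & 0 \\ C & I_n \end{pmatrix}$, $\begin{pmatrix} I_n & B \\ 0 & I_n \end{pmatrix}$ has source of complex dimension $K \cdot n(n+1)/2$, and that $Sp_{2n}(\C)$ retracts onto its maximal compact $U(n)$ of real dimension $n^2$; the relevant invariant is therefore the real dimension $\dim_\R Sp_{2n}(\C) = \dim_\R SL_{2n}(\C) \cap \{\text{sympl.}\} $. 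The key numerical substitution is that the quantity $n^2-1$ appearing in the $SL_n$ bound (the complex dimension of $SL_n(\C)$) is replaced by the complex dimension of $Sp_{2n}(\C)$, which is $2n^2+n = n(2n+1)$; however the stated bound has denominator $4n^2-1$, which is the complex dimension of $SL_{2n}(\C)$, so I would check which of these the Phillips/\cite{MR4678654} argument actually yields — the safe route, giving a valid (possibly non-sharp) bound, is to use the ambient $SL_{2n}(\C)$ since every symplectic elementary factor is in particular an $SL_{2n}(\C)$ elementary matrix, so a symplectic factorization of length $K$ is an $SL_{2n}$ factorization of length $K$, and the $SL_{2n}$ lower bound $\lfloor (d-2)/((2n)^2-1)\rfloor - 1 = \lfloor (d-2)/(4n^2-1)\rfloor - 1$ applies directly.

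Concretely, the argument proceeds as follows. First, observe that each generator $\begin{pmatrix} I_n & 0 \\ C & I_n \end{pmatrix}$ or $\begin{pmatrix} I_n & B \\ 0 & I_n \end{pmatrix}$ is a (block) unitriangular matrix in $SL_{2n}(\C)$, so a length-$K$ symplectic factorization of $f\colon T\to Sp_{2n}(\C)\subset SL_{2n}(\C)$ is a fortiori a length-$K$ unitriangular factorization of the same map viewed in $SL_{2n}(\C)$ — but with the crucial restriction that the entries live on a subvariety of $\C^{(2n)^2-1}$ of much smaller dimension. One then applies the counting principle of \cite{MR4678654}*{Proposition 4.1}: if $f$ factors through $\Phi_K^{sp}$, then $f$ factors through a complex manifold of complex dimension $K\cdot n(n+1)/2$, and a nontrivial cohomology (or homotopy) obstruction of $f$ living in degree roughly $d$ forces $K\cdot n(n+1)/2 \ge$ (something linear in $d$). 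To realize a map with such an obstruction one takes $T = S^d$ and the nullhomotopic map obtained by pulling back a generator of a high homotopy group of $Sp_{2n}(\C)$ (equivalently of $U(n)$), exactly as in Phillips's proof of his Theorem~5.3; the resulting inequality, after the substitution of the symplectic dimension, gives $K \ge \lfloor (d-2)/(4n^2-1)\rfloor - 1$. Finally the constant lower bound $L_{sp}(n,d)\ge 4$ for all $n,d$ comes from the fact that $SL_2(\C)$, hence $Sp_2(\C)=SL_2(\C)$, already requires at least $4$ factors generically (the LULU decomposition is optimal), together with the stability inequality $L_{sp}(n,d)\ge \cdots$ reducing the general case to small $n$; taking the maximum of the two bounds yields the stated formula.

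\textbf{Main obstacle.} The delicate point is verifying that the proof of Proposition~4.1 in \cite{MR4678654} — which is phrased for $SL_n$-valued maps and counts against the dimension of $SL_n(\C)$ — is correctly transcribed, in particular determining whether the natural symplectic version gives the denominator $n(2n+1)$ (intrinsic $Sp_{2n}(\C)$) or $4n^2-1$ (ambient $SL_{2n}(\C)$); the statement as written uses $4n^2-1$, which is exactly what one gets for free by embedding into $SL_{2n}(\C)$, so the honest thing is to say the bound follows by restriction from the $SL_{2n}$ case. The only other thing needing care is matching Phillips's construction of a high-dimensional non-factorizable example in $U(n)$ with the nullhomotopy hypothesis of Theorem~\ref{IKL-Sp-Cont} (the example must be nullhomotopic in $Sp_{2n}(\C)$ yet have a surviving obstruction against short factorizations — this is precisely the content of Phillips's Theorem~5.3 and transfers mechanically), and confirming that the floor and the $-1$ corrections are bookkept consistently with the $SL_n$ statement immediately above, which they are since the derivation is parallel.
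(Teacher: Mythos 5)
Your proposal has a genuine logical gap at the point where you declare that the $SL_{2n}$ lower bound ``applies directly''. The bound $L(2n,d)\ge \lfloor (d-2)/(4n^2-1)\rfloor -1$ is an \emph{existence} statement: for suitable $T$ of dimension $d$ there is some nullhomotopic map $T\to SL_{2n}(\C)$ that admits no short unitriangular factorization. The observation that every factor $\begin{pmatrix} I_n & 0 \\ C & I_n\end{pmatrix}$, $\begin{pmatrix} I_n & B \\ 0 & I_n\end{pmatrix}$ is unitriangular in $SL_{2n}(\C)$ only yields, for a map $f$ that already takes values in $Sp_{2n}(\C)$, the inequality (minimal number of $SL_{2n}$-unitriangular factors of $f$) $\le$ (minimal number of symplectic elementary factors of $f$). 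To conclude anything about $L_{sp}(n,d)$ you must exhibit a \emph{symplectic} hard example, i.e.\ a nullhomotopic map into $Sp_{2n}(\C)$ whose obstruction against short factorizations survives; the extremal map furnished by the $SL_{2n}$ statement need not be symplectic, and Phillips's Theorem 5.3 concerns exponential length for invertible (resp.\ unitary/determinant-one) elements of $M_N(\mathcal C(X))$, not symplectic groups, so the claim that this ``transfers mechanically'' is precisely the step that requires proof. This is also where your route diverges from the paper: the paper does not restrict the $SL_{2n}$ statement but reruns the argument of Proposition 4.1 of \cite{MR4678654} for symplectic elementary factors (``with the same proof for the symplectic case''), together with the construction from the proof of Phillips's Theorem 5.3, so that the example and the obstruction are produced inside $Sp_{2n}(\C)$; the denominator $4n^2-1$ then comes out of that counting, rather than being justified by matching it with the ambient $SL_{2n}$ formula.

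Two smaller points. The maximal compact subgroup of $Sp_{2n}(\C)$ is the compact symplectic group $Sp(n)$, of real dimension $n(2n+1)$, not $U(n)$ of dimension $n^2$; since the whole theorem is a dimension count, this slip matters for any ``intrinsic'' version of the argument you sketch (which would in fact give a denominator $n(2n+1)$, a \emph{stronger} bound than the stated one, so deciding between the two is not mere bookkeeping). Finally, your justification of the constant $4$ runs the monotonicity the wrong way: Theorem \ref{theorem:tavgen} gives $L_{sp}(n+1,d)\le L_{sp}(n,d)$, so a lower bound for $Sp_2=SL_2$ does not propagate to larger $n$; one needs, for each $n$, an explicit element of $Sp_{2n}$ (e.g.\ a suitable diagonal symplectic matrix) that is not a product of three factors of the restricted block form.
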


\section{Holomorphic results}  \label{sec :holo}

Manfred Klein and Karl Ramspott \cite[\S IV]{MR0966022} showed that every holomorphic map from a noncompact Riemann surface to $SL_n(\C)$ is a product of unitriangular factors. 
Gromov \cite{MR1001851} asked the following more general question: Does every holomorphic map $\C^m \to SL_n(\C)$ decompose into a finite product of holomorphic maps sending $\C^m$ into unipotent subgroups in $SL_n(\C)$?
He called it after the advocate of the continuous factorization {\it the Vaserstein Problem}. It was solved in greater generality by Ivarsson and Kutzschebauch. 

\begin{Thm}[Ivarsson--Kutzschebauch \cite{MR2874639}]\label{IK-SL}
    There exists a natural
    number $K = K(n, d)$ such that given any finite dimensional reduced Stein space $X$ of dimension $d$, every nullhomotopic holomorphic map $f \colon X \to SL_n(\C)$ can be written as a product of no more than $K$ unitriangular matrices with entries in $\O(X)$. 
\end{Thm}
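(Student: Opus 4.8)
The plan is to recast the factorization problem as the existence of a holomorphic section of an explicit holomorphic map over $X$, obtain a continuous section from the continuous theory, and then promote it to a holomorphic one by the Oka principle. For a fixed number of factors $K$, let
\[
  \Phi_K\colon\bigl(\C^{\binom{n}{2}}\bigr)^{K}\longrightarrow SL_n(\C)
\]
be the map sending $(z_1,\dots,z_K)$ to the product of $K$ unitriangular matrices, alternately lower and upper, whose off-diagonal entries are the coordinates of $z_1,\dots,z_K$ respectively. A factorization of $f\colon X\to SL_n(\C)$ into $K$ holomorphic unitriangular factors is precisely a holomorphic section of the pull-back $f^*\Phi_K\to X$, whose fibre over $x$ is $\Phi_K^{-1}(f(x))$; continuous factorizations correspond to continuous sections. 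Since $f$ is nullhomotopic, Theorem~\ref{vaserstein} supplies a continuous factorization once $K\ge L(n,d)$, hence a continuous section of $f^*\Phi_K$. The whole problem is therefore to pass from a continuous section to a holomorphic one, at the cost of enlarging $K$ by a bounded amount.

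Were $\Phi_K$ a submersion enjoying Gromov's ellipticity property, we would conclude at once from the Oka principle of Gromov and Forstneri\v{c}--Prezelj: over a Stein space, a continuous section of an elliptic submersion is homotopic through sections to a holomorphic one. The obstruction, which carries the entire weight of the theorem, is that $\Phi_K$ is very far from a submersion: the dimension of its fibres jumps, and the fibres acquire singularities over a proper algebraic subset of $SL_n(\C)$ defined by the vanishing of minors. Already for $n=2$, the analysis of the locus $\{a=1\}$ above and the Remark following it show that over part of that subset the fibre of $f^*\Phi_4$ degenerates to a union of two coordinate lines --- not a manifold --- and cannot be dodged by any continuous four-factor section. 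The way forward is to allow more factors: the surplus is exactly what is needed to push the section off the degenerate strata and to organise the construction into genuinely elliptic pieces.

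To carry this out, one first reduces, by successively clearing columns and rows, to a column-reduction statement: a nullhomotopic holomorphic map $g\colon X\to\C^n\setminus\{0\}$ can be carried to the constant $e_1$ by a number, bounded in terms of $n$ and $d$, of elementary row operations $I+hE_{ij}$ with $h\in\O(X)$ --- for $n=2$ this already is the whole theorem, the leftover factor being upper unitriangular. For the column statement one forms the analogous multiplication map $\Psi_K\colon(\text{elementary operations})^{K}\to\C^n\setminus\{0\}$, $(E_1,\dots,E_K)\mapsto E_1\cdots E_K\,e_1$, pulls it back by $g$, and equips the target $\C^n\setminus\{0\}$ with a finite stratification (recording how many of the last coordinates vanish) chosen so that, as soon as $K$ exceeds an explicit threshold, $\Psi_K$ restricts over each stratum to a submersion whose fibres are complements of algebraic hypersurfaces in affine spaces, hence Oka manifolds carrying dominating sprays, compatible along the stratification. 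Then $\Psi_K$ is a \emph{stratified} elliptic submersion, the corresponding Oka principle converts the continuous section (which exists by the continuous theory, or directly once the fibres are made highly connected by taking $K$ large) into a holomorphic one, and reassembling the column steps yields a uniform $K=K(n,d)$ for $SL_n$.

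I expect the main obstacle to be exactly the construction compressed into the previous paragraph: choosing $K$ and the precise form of the elementary operations so that, stratum by stratum, the fibres are nonsingular Oka manifolds with dominating sprays, and --- the technical heart --- verifying that these sprays match along the stratification, which is why the right framework is stratified rather than plain elliptic submersions. A further, more combinatorial, point of care is keeping the nullhomotopy hypothesis alive through the column and row reductions, so that a continuous section is always on hand to feed the Oka machine. The case $n=2$ already displays all of this in miniature --- the reducible ``cross of lines'' fibres in particular --- which is why the passage from ``four factors never suffice, continuously or holomorphically'' to ``finitely many always suffice, with a bound depending only on $n$ and $d$'' is the substantive step.
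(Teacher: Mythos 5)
Your proposal follows essentially the same route as the paper's (sketched) proof of Theorem~\ref{IK-SL}: reformulate a $K$-fold factorization as a section of the pullback of the product map $\Phi_K$, get a continuous section from Vaserstein's theorem via nullhomotopy, show the map composed with the reduction to a row/column (your $\Psi_K$, the paper's projection to the last row) is a stratified elliptic submersion, apply Forstneri\v{c}'s Oka principle for stratified elliptic submersions, and recover the full matrix case by induction on $n$. The only cosmetic difference is that you parametrize by elementary operations acting on $e_1$ rather than by the last row of $\Phi_K$, which does not change the substance of the argument.
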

Regarding the number of factors, first we have the Bass stable rank one case $K(n,1)=4$ for all $n \ge 1$,  cf.\@ Theorem \ref{Bru-bsr} and Theorem \ref{bsr1-SL-Sp} in Section \ref{subsec:bsr1}. Again Theorem \ref{theorem:tavgen} implies 
\[
    K(n,d) \le K(2,d) \text{ for all } n \ge 2, \, d \ge 0 
\]
and $\lim_{n \to \infty} K(n,d) \le 6$ from Dennis--Vaserstein \cite[Theorem 20]{MR0961333}. 

From Phillips' lower bound for continuous factorization into exponential factors \cite{MR1305876}, Brudnyi and  Sasane recently deduced the following lower bound for unitriangular factorization. 

\begin{Thm}[\cite{BrudSasa} Remark 3.8]
    \[
        K(n,d) \ge \max \left\{  4, \left\lfloor \frac{d}{n^2-1} \right\rfloor -3   \right\}. 
    \]
\end{Thm}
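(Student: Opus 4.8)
The plan splits the inequality into its two parts. For the term $4$: since consecutive unitriangular factors of the same kind merge, any unitriangular factorization may be taken alternating, so one must see that $SL_n(\C)$ with $n\ge2$ is exhausted neither by $U^-U^+U^-$ nor by $U^+U^-U^+$. If $M=LUL'$ with $L,L'$ lower and $U$ upper unitriangular, then $L^{-1}ML'^{-1}=U$; writing $L^{-1}=I+A$, $L'^{-1}=I+B$ with $A,B$ strictly lower triangular, the matrices $AM$, $MB$, $AMB$ all have vanishing diagonal (a nonzero diagonal entry of $AMB$ would require indices $j<i$ and $i<j$), so the diagonal of $(I+A)M(I+B)=M+AM+MB+AMB$ coincides with that of $M$; since this product is $U$, we get $\operatorname{diag}(M)=I$. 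Thus a non-identity diagonal matrix such as $\operatorname{diag}(2,\tfrac12,1,\dots,1)$ — a constant, hence nullhomotopic, map from any Stein space — is not a product of three unitriangular matrices of type $U^-U^+U^-$; being symmetric it is not of type $U^+U^-U^+$ either (transpose), and therefore $K(n,d)\ge4$ for $n\ge2$. What remains is the substantive estimate $K(n,d)\ge\lfloor d/(n^2-1)\rfloor-3$.

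First I would record the reduction to exponentials from Section \ref{sec: nullhom}: a unitriangular matrix $U=I+N$ with $N$ strictly triangular equals $\exp(\Xi)$ with $\Xi=\log(I+N)$ a polynomial in the nilpotent matrix $N$, taking values in the Lie algebra of strictly triangular matrices, hence in $\mathfrak{sl}_n(\C)$, and depending holomorphically on the parameter whenever $U$ does. Consequently, if a nullhomotopic holomorphic map $f\colon X\to SL_n(\C)$ on a Stein space $X$ of complex dimension $d$ were a product of $K$ holomorphic unitriangular matrices, it would be a product of $K$ holomorphic — a fortiori continuous — exponentials $f=\exp(\xi_1)\cdots\exp(\xi_K)$ with $\xi_i\in\mathcal O(X,\mathfrak{sl}_n(\C))$. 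Hence any lower bound on the number of continuous exponential factors needed over $d$-dimensional Stein spaces is also a lower bound for $K(n,d)$; and $n^2-1=\dim_\C\mathfrak{sl}_n(\C)$, which is the origin of that quantity in the denominator.

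Then comes the analytic input, imported from Phillips. The proof of \cite{MR1305876}*{Theorem 5.3}, applied to $SL_n(\C)$ in place of the unitary group, produces a lower bound of the form $\lfloor m/(n^2-1)\rfloor-O(1)$ on the number of continuous exponential factors needed over a base of (homotopy) dimension $m$: one pulls back a fixed invariant (Chern--Weil, resp.\ secondary) datum on $SL_n(\C)$, shows it is nonzero, and estimates the portion of it that a single factor $\exp(\xi)$ can account for — an estimate controlled by $\dim_\C\mathfrak{sl}_n(\C)=n^2-1$. This estimate is sensitive to the map itself, not merely its homotopy class, which is precisely why it can be nonzero on nullhomotopic maps and hence why $K(n,d)$ — a bound about nullhomotopic maps — can grow at all. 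To obtain the Stein version, recall that a Stein space of complex dimension $d$ has the homotopy type of a CW complex of dimension $\le d$ (Andreotti--Frankel), so Phillips' degree-$\le d$ obstruction applies on it, and since $SL_n(\C)$ is an Oka manifold a suitable witness can be taken holomorphic. Concretely one may take $X$ to be an explicit $d$-dimensional affine model whose de Rham cohomology reaches degree $d$ — a smooth affine quadric $Q_d=\{z\in\C^{d+1}:z_1^2+\dots+z_{d+1}^2=1\}\simeq S^d$, or a product of such, or the affine torus $(\C^*)^d\simeq(S^1)^d$ — together with a nullhomotopic holomorphic $f\colon X\to SL_n(\C)$ carrying Phillips' invariant nontrivially. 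Running $f$ through the previous reduction, a $K$-fold unitriangular factorization would be a $K$-fold exponential factorization, contradicting Phillips' bound unless $K\ge\lfloor d/(n^2-1)\rfloor-3$; together with $K(n,d)\ge4$ this is the claim.

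The main obstacle is the faithful transport of Phillips' argument — originally phrased for unitary groups of $C^*$-algebras — to holomorphic maps into $SL_n(\C)$ over a Stein base, and the precise bookkeeping of the additive constant ``$-3$''. Phillips' own examples live on finite polyhedra, whereas here the base is constrained to be a Stein space of complex dimension $d$; reconciling homotopy dimension with complex dimension (via affine quadrics) is what makes the constant weaker than in the continuous bound $L(n,d)\ge \left\lfloor \frac{d-2}{n^2-1} \right\rfloor -1$. One must also check, for the chosen affine model, that its top-degree de Rham cohomology still detects Phillips' invariant, and that the witnessing map can be taken simultaneously holomorphic (Oka principle) and nullhomotopic, so that Theorem \ref{IK-SL} genuinely applies to it — if necessary by correcting a non-nullhomotopic Phillips witness by a bounded, dimension-independent number of unitriangular factors, which only shifts the constant. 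This analysis is what is carried out in \cite{BrudSasa}*{Remark 3.8} (paralleling \cite{MR4678654}*{Proposition 4.1} on the continuous side).
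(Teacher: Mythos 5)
For the substantive term $\left\lfloor d/(n^2-1)\right\rfloor-3$ your route is the same as the paper's: a unitriangular factor $I+N$ equals $\exp(\log(I+N))$ with $\log(I+N)$ nilpotent and holomorphic in the parameter, so a $K$-fold unitriangular factorization is a $K$-fold (holomorphic, a fortiori continuous) exponential factorization, and the quantitative lower bound is then imported from Phillips \cite{MR1305876} as packaged in \cite{BrudSasa}*{Remark 3.8}. The survey gives no proof beyond exactly this citation, so on this half you and the paper coincide; just be aware that your description of the mechanism (Chern--Weil/secondary invariants, affine quadrics as witnesses, the provenance of the constant $-3$, the compatibility of ``holomorphic via Oka'' with a non-homotopy-invariant obstruction) is speculative, and the actual bookkeeping is delegated to the reference, as in the paper.

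There is, however, a genuine error in your self-contained argument for the term $4$. You claim that for $A,B$ strictly lower triangular the products $AM$, $MB$, $AMB$ have vanishing diagonal, hence $\mathrm{diag}(M)=I$ whenever $M\in U^-U^+U^-$. This is false: $(AM)_{ii}=\sum_{j<i}A_{ij}M_{ji}$ need not vanish, and concretely
\[
\begin{pmatrix}1&0\\1&1\end{pmatrix}\begin{pmatrix}1&1\\0&1\end{pmatrix}=\begin{pmatrix}1&1\\1&2\end{pmatrix}
\]
is a product of type $U^-U^+U^-$ (third factor the identity) whose diagonal is not $(1,1)$. Your witness $\mathrm{diag}(2,\tfrac12,1,\dots,1)$ does work, but by a different computation: if $M=L_1UL_2$ with $L_1,L_2$ lower and $U$ upper unitriangular, then the first row of $M$ equals the first row of $UL_2$; if $M_{1j}=0$ for all $j\ge2$, backward induction from $j=n$ (using that $L_2$ is lower unitriangular) forces $U_{1j}=0$ for $j\ge2$ and then $M_{11}=U_{11}=1$. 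Hence a matrix with first row $(c,0,\dots,0)$, $c\neq1$, is not of type $U^-U^+U^-$, and by transposing, one with first column $(c,0,\dots,0)^{T}$, $c\neq1$, is not of type $U^+U^-U^+$. Since this argument is purely ring-theoretic it applies verbatim over $\mathcal{O}(X)$, so the constant map $\mathrm{diag}(2,\tfrac12,1,\dots,1)$ indeed yields $K(n,d)\ge4$; your reduction to alternating products is fine, only the diagonal step needs this repair.
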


Ivarsson and Kutzschebauch \cite{MR2869067} showed that $K(2,2) = 5$. As an example let us look at the Cohn matrix $\left(\begin{matrix} 1+zw & z^2 \\ -w^2 & 1-zw  \end{matrix}\right)$. First we make the $(1,1)$ entry invertible
\[
    \left(\begin{matrix} 1+zw & z^2 \\ -w^2 & 1-zw  \end{matrix}\right) \begin{pmatrix}
        1 & 0 \\ \frac{e^{zw} - 1 - zw}{z^2} & 1
    \end{pmatrix}
    = 
    \begin{pmatrix}
        e^{zw} & z^2 \\ * & 1 -zw
    \end{pmatrix}.
\]
Now since  $e^{zw}$ is invertible one can proceed like over a field and reduce the right hand side by 4 more column operations to the identity matrix. This yields the desired factorization into 5 holomorphic factors.

\medskip

The proof of Theorem \ref{IK-SL} goes by an induction on the size $n$ of the matrices. The inductive step is based on the fact that the restriction of $\Phi_K$,
composed with the projection to the last row, 
to its regular points is a stratified elliptic submersion. Here $\Phi_K$ is given by 
\begin{align}\label{PHI_K}
    \Phi_K \colon \left( \C^{\frac{n(n-1)}{2}} \right)^K &\to SL_n(\C), \\ 
    (Z_1, \dots, Z_K) &\mapsto \left(\begin{matrix} 1 & 0 \cr Z_1 & 1 \cr \end{matrix} \right)   
\left(\begin{matrix} 1 & Z_2 \cr 0 & 1 \cr \end{matrix} \right)  \ldots \left(\begin{matrix} 1 & Z_K\cr 0 & 1 \cr \end{matrix} \right). \nonumber
\end{align}
The Oka principle for stratified elliptic submersions due to Forstneri{\v c} implies  that a global continuous section of the pullback $f^* \Phi_K $ by a holomorphic map $f \colon X \to SL_n(\C)$ from a Stein space $X$  can be deformed to a global holomorphic section. Such a global holomorphic section is exactly a product of holomorphic unitriangular matrices whose last row coincides with the last row of $f$.  The existence of a global continuous section
follows from Vaserstein's result, Theorem \ref{vaserstein}.

Using a similar strategy, the symplectic version of Vaserstein's problem was solved by Josua Schott.

\begin{Thm}[General case by Schott \cite{Schott}, Ivarsson--Kutzschebauch--L\o w \cite{MR4578529} for $n=2$]\label{Schott}
There exists a natural
number $K'_{sp} = K'_{sp}(n, d)$ such that given any finite dimensional reduced Stein space $X$ of dimension
$d$, every nullhomotopic holomorphic map $f \colon X \to \mathrm{Sp}_{2n}(\mathbb{C})$ can be written as a product of no more than $K'_{sp}$ matrices of the form $\begin{pmatrix} I_n & 0 \\ C & I_n \end{pmatrix}$ and $ \begin{pmatrix} I_n & B \\ 0 & I_n \end{pmatrix}$ where $B,C$ are $n \times n$ symmetric matrices with entries in $\O(X)$. 
\end{Thm}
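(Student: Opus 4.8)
The plan is to adapt the proof of Theorem~\ref{IK-SL} to the symplectic setting, using the continuous symplectic factorization (Theorem~\ref{IKL-Sp-Cont}) wherever the proof of Theorem~\ref{IK-SL} uses Vaserstein's Theorem~\ref{vaserstein}. One argues by induction on $n$. The base case $n=1$ is exactly Theorem~\ref{IK-SL} applied to $Sp_2(\C) = SL_2(\C)$, since for $n=1$ the matrices $\begin{pmatrix} I_1 & 0 \\ C & I_1 \end{pmatrix}$ and $\begin{pmatrix} I_1 & B \\ 0 & I_1 \end{pmatrix}$ are the ordinary lower and upper unitriangular $2\times 2$ matrices, so $f$ factors into $K(2,d)$ of them.

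For the inductive step, let $\mathrm{Sym}_n$ be the space of complex symmetric $n\times n$ matrices and introduce the symplectic analogue $\Psi_K$ of the map $\Phi_K$ of \eqref{PHI_K},
\[
    \Psi_K \colon (\mathrm{Sym}_n)^K \to Sp_{2n}(\C), \qquad (B_1, C_2, B_3, \ldots) \longmapsto \begin{pmatrix} I_n & 0 \\ B_1 & I_n \end{pmatrix}\begin{pmatrix} I_n & C_2 \\ 0 & I_n \end{pmatrix}\cdots ,
\]
a product of $K$ factors of the two prescribed types (the special cases of \eqref{U-minus} and \eqref{U-plus} with diagonal block $I_n$). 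The group $Sp_{2n}(\C)$ acts transitively on the smooth affine variety $W = \{(v,w)\in\C^{2n}\times\C^{2n} : \omega(v,w)=1\}$, the stabilizer of a standard hyperbolic pair being the block-embedded $Sp_{2n-2}(\C)$; let $\pi\colon Sp_{2n}(\C)\to W\cong Sp_{2n}(\C)/Sp_{2n-2}(\C)$ be the resulting projection, given by reading off a suitable pair of columns. The central geometric claim is that, after deleting an appropriate analytic set of singular points and stratifying what remains, the composition $\pi\circ\Psi_K$ is a stratified elliptic submersion in Forstneri\v{c}'s sense; this amounts to producing, over each stratum, a spray dominating the vertical tangent space, exploiting that two consecutive factors of the same type differ by the addition of an arbitrary symmetric matrix.

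Granting this claim, the remainder is formal. Given a nullhomotopic holomorphic $f\colon X\to Sp_{2n}(\C)$ on a finite dimensional reduced Stein space $X$ of dimension $d$, Theorem~\ref{IKL-Sp-Cont} supplies a global continuous section of the pullback $(\pi\circ f)^\ast(\pi\circ\Psi_K)$ over $X$; the Oka principle for stratified elliptic submersions deforms it to a global holomorphic section, i.e.\@ a product $P(x)$ of $K$ elementary symplectic matrices with $\pi(P(x)) = \pi(f(x))$. Then $P(x)^{-1}f(x)$ takes values in the block-embedded $Sp_{2n-2}(\C)$, whose block-elementary factors are again of the required form, and the induction hypothesis finishes the step. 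Amalgamating consecutive factors of the same type after each descent keeps the resulting bound $K'_{sp}(n,d)$ finite and dependent only on $n$ and $d$; combined with Theorem~\ref{theorem:tavgen} one also obtains $K'_{sp}(n,d)\le K(2,d)$.

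The main obstacle is verifying the stratified ellipticity of $\pi\circ\Psi_K$: one must identify the correct stratification of $Sp_{2n}(\C)$ (equivalently of $W$, or of $X$ after pullback) over which the fibres of $\pi\circ\Psi_K$ behave uniformly, and exhibit dominating sprays on each stratum. The symmetry constraint on the factors makes this more delicate than in the $SL_n$ case --- in particular the genericity conditions that over a field allow one to render the relevant entries invertible must now be met within the symmetric matrices --- and this is where the bulk of the work lies; the combination of the Oka principle with the continuous result and the induction on the matrix size is by now routine.
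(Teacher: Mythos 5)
Your outline reproduces the strategy that the cited proofs (Schott, and Ivarsson--Kutzschebauch--L\o w for $n=2$) and this survey describe: induction on $n$, a symplectic analogue of $\Phi_K$ from \eqref{PHI_K}, a continuous section supplied by Theorem \ref{IKL-Sp-Cont}, and Forstneri\v{c}'s Oka principle for stratified elliptic submersions. But as a proof it has a genuine gap exactly where the theorem lives: the ``central geometric claim'' that $\pi\circ\Psi_K$ (after removing a singular set and stratifying) is a stratified elliptic submersion is asserted, not proved. That claim is the whole content of Schott's work --- the survey explicitly notes that proving the stratified ellipticity of the composed map is ``much more difficult than in the $SL_n$ case'' because the factors are constrained to have symmetric off-diagonal blocks and identity diagonal blocks --- so deferring it means the theorem is not established. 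Note also that you project to the hyperbolic-pair space $Sp_{2n}(\C)/Sp_{2n-2}(\C)$ rather than to the last row as in the cited proofs; that is a legitimate variant, but it changes the ellipticity statement you must verify and is not obviously easier. A second, related gap: you say that once ellipticity is granted ``the remainder is formal'', with the continuous section coming from Theorem \ref{IKL-Sp-Cont}. The section so obtained need not avoid the singular locus you deleted to make $\pi\circ\Psi_K$ a stratified elliptic submersion (compare the Remark on the Cohn example, where the continuous section necessarily passes through the singularity set); arranging a continuous section compatible with the stratification, typically at the cost of extra factors, is a substantive part of the actual proofs and cannot be waved through.

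Finally, the closing assertion that Theorem \ref{theorem:tavgen} yields $K'_{sp}(n,d)\le K(2,d)$ is incorrect for this theorem. Tavgen's reduction produces unitriangular factors of the general forms \eqref{U-minus} and \eqref{U-plus} (unitriangular with respect to a Borel), which is why it bounds $K_{sp}(n,d)\le K(2,d)$ in Theorem \ref{Bao-Sp}; it does not directly bound the number of factors with identity diagonal blocks required here. The only comparison recorded in the survey is $K'_{sp}(n,d)\le 7K_{sp}(n,d)$ (resp.\@ $4K_{sp}(n,d)$ for $n=2,3$) from Theorem \ref{Omega-to-OmegaTilde}, and the number of such factors is stated to be mostly unknown even over fields.
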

The proof of the fact that the corresponding map $\Phi_K$, composed with the projection to the last row, is a stratified elliptic submersion is much more difficult than in the $SL_n$ case. The number of factors needed for this factorization is mostly unknown even over fields, cf.\@ Pengzhan Jin, Zhangli Lin and Bo Xiao \cite{MR4442601}. 

Actually the situation becomes more pleasant if one allows more general factors of the form \eqref{U-minus} and \eqref{U-plus}. Indeed by Tavgen's trick, see Theorem \ref{theorem:tavgen}, the number of unitriangular factors needed for $Sp_{2n}$ is then bounded by the correponding number for $Sp_2 = SL_2$. In addition, the proof of the stratified ellipticity of the corresponding submersion can be simplified.

\begin{Thm}[Huang--Kutzschebauch--Bao Tran \cite{Symplectic-Bao}]\label{Bao-Sp} 
There exists a natural
number $K_{sp} = K_{sp}(n, d)$ such that given any finite dimensional reduced Stein space $X$ of dimension
$d$, every nullhomotopic holomorphic map $f \colon  X \to Sp_{2n}(\mathbb{C})$ can be written as a product of no more than $K_{sp}$ matrices of the form \eqref{U-minus} and \eqref{U-plus} with entries in $\O(X)$.
\end{Thm}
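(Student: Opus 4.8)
We sketch the strategy. The plan is to run an induction on $n$ in which, at each stage, one first produces a holomorphic factorization with \emph{some} finite number of factors by the Oka principle, and then reduces the number of factors to the one needed for $SL_2$ by Tavgen's trick (Theorem~\ref{theorem:tavgen}) applied to the ring $R=\O(X)$. For $n=1$ the factors \eqref{U-minus} and \eqref{U-plus} are just the ordinary lower and upper unitriangular $2\times 2$ matrices and $Sp_2(\C)=SL_2(\C)$, so the assertion is Theorem~\ref{IK-SL} for $SL_2$, giving $K_{sp}(1,d)\le K(2,d)$. One should note that the existence half of the inductive step can, if one wishes, be imported directly from Schott's Theorem~\ref{Schott}: the symmetric factors $\begin{pmatrix} I_n & 0 \\ C & I_n\end{pmatrix}$, $\begin{pmatrix} I_n & B \\ 0 & I_n\end{pmatrix}$ appearing there are the instances $A=I_n$ of \eqref{U-minus}, \eqref{U-plus}, so a Schott factorization already exhibits $f$ as an element of $Ep_{2n}(\O(X))$; the point of re-deriving it with the larger factors \eqref{U-minus}, \eqref{U-plus} is that the required analytic input becomes considerably simpler and the bookkeeping via Tavgen's trick becomes available.

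For the inductive step I would follow the scheme of the proof of Theorem~\ref{IK-SL}. Reverting the order of the last $n$ basis vectors of $\C^{2n}$ turns \eqref{U-minus} and \eqref{U-plus} into genuine lower and upper unitriangular $2n\times 2n$ matrices, so one has a holomorphic map $\Phi_K$ from a product of affine spaces — parametrizing the admissible pairs in \eqref{U-minus}, \eqref{U-plus} — onto $Sp_{2n}(\C)$, exactly as in \eqref{PHI_K}. The crucial point is that $\Phi_K$, composed with the projection onto the last row and restricted to its regular locus, is a stratified elliptic submersion onto the corresponding space of unimodular rows; here working with the full unipotent radicals \eqref{U-minus}, \eqref{U-plus} rather than with the symmetric blocks of Theorem~\ref{Schott} makes both the stratification and the construction of fibre-dominating sprays markedly more transparent. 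Granting this, let $f\colon X\to Sp_{2n}(\C)$ be nullhomotopic; by the continuous symplectic theorem of Ivarsson--Kutzschebauch--L\o w (Theorem~\ref{IKL-Sp-Cont}) it admits a continuous factorization into finitely many symmetric factors, which, regarded as degenerate instances of \eqref{U-minus}, \eqref{U-plus}, furnish a continuous section of the pullback $f^*\Phi_K$. Forstneri{\v c}'s Oka principle for stratified elliptic submersions then deforms this section to a holomorphic one, that is, to a holomorphic product of factors \eqref{U-minus}, \eqref{U-plus} whose last row is that of $f$; multiplying $f$ by the inverse of this product trivializes the last row and leaves a holomorphic map into a copy of $Sp_{2n-2}(\C)$ to which the induction hypothesis applies, completing the existence part with some bound $K_0(n,d)$. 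In particular the set of nullhomotopic holomorphic maps $X\to Sp_{2n}(\C)$ coincides with $Ep_{2n}(\O(X))=E(C_n,\O(X))$, the reverse inclusion being clear since every matrix \eqref{U-minus}, \eqref{U-plus} is unipotent, hence nullhomotopic via $\exp(t\log U)$ as in Section~\ref{sec: nullhom}.

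Now apply Tavgen's Theorem~\ref{theorem:tavgen} with $R=\O(X)$ to the root system $C_n$: deleting the two end nodes of its Dynkin diagram produces the rank $n-1$ subsystems $C_{n-1}$ and $A_{n-1}$. By the inductive hypothesis $E(C_{n-1},\O(X))$ admits a factorization into at most $K(2,d)$ factors of the form \eqref{U-minus}, \eqref{U-plus}, while $E(A_{n-1},\O(X))=E_n(\O(X))$ — the set of nullhomotopic holomorphic maps $X\to SL_n(\C)$ — admits one into at most $K(2,d)$ unitriangular factors by Theorem~\ref{IK-SL} together with $K(n,d)\le K(2,d)$. Padding the shorter factorization with identity factors and feeding both into Theorem~\ref{theorem:tavgen} yields, for every element of $E(C_n,\O(X))=Ep_{2n}(\O(X))$, a factorization of length at most $K(2,d)$ into matrices drawn from the unipotent radicals $U^{\pm}(C_n,\O(X))$, i.e.\ of the form \eqref{U-minus}, \eqref{U-plus}. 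Hence $K_{sp}(n,d)\le K(2,d)$, which closes the induction. The main obstacle is the stratified ellipticity used in the second paragraph: one must describe the stratification of the critical set of $\Phi_K$ composed with the last-row projection, check the submersion property over each stratum of the target, and exhibit dominating sprays compatible with the symplectic constraint — the passage to the larger factors \eqref{U-minus}, \eqref{U-plus} streamlines this but does not make it routine. Finally, the identical argument with $\O(X)$ replaced by $\mathcal{C}(T)$ and Theorem~\ref{IKL-Sp-Cont} used directly reproves the bound $L_{sp}(n,d)\le L(2,d)$ in the continuous setting.
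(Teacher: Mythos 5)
Your sketch follows essentially the same route the survey attributes to \cite{Symplectic-Bao}: an Oka-principle argument for the map $\Phi_K$ built from the factors \eqref{U-minus} and \eqref{U-plus} composed with the last-row projection, with the continuous section supplied by Theorem~\ref{IKL-Sp-Cont}, and Tavgen's trick (Theorem~\ref{theorem:tavgen}) applied to $C_n$ with the rank $n-1$ subsystems $C_{n-1}$ and $A_{n-1}$ to bound $K_{sp}(n,d)$ by $K(2,d)$. The only genuinely hard ingredient --- the stratified ellipticity of the composed submersion --- is precisely the point you flag as the main obstacle, and it is also the part the paper does not prove but defers to \cite{Symplectic-Bao}.
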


Concerning the number of factors, when the Bass stable rank is one, $K_{sp}(n,1)=4$ for all $n \ge 1$ as before. Also Theorem \ref{theorem:tavgen} implies 
\[
    K_{sp}(n,d) \le K_{sp}(1,d) = K(2,d) \text{ for all } n \ge 1, \, d \ge 0. 
\]

Analogous to the $SL_n$ case, Corollary 13.6 in Brudnyi--Sasane \cite{BrudSasa} and Proposition 4.1 of \cite{MR4678654} imply the following lower bound for unitriangular factorization of symplectic matrices.
\begin{Thm}
    \begin{align*}
        K_{sp}(n,d)  \ge  \max \left\{ 4, \left\lfloor \frac{d}{4n^2-1} \right\rfloor -3 \right\}, \quad n \ge 1.
    \end{align*}
\end{Thm}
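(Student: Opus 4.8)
The plan is to bound the number of symplectic unitriangular factors from below by a number of \emph{exponential} factors inside the larger group $SL_{2n}(\C)$, and then to invoke the lower bound of Brudnyi--Sasane for the latter. The point of departure is that, after reverting the order of the last $n$ basis vectors of $R^{2n}$ as in the discussion following Theorem~\ref{bsr1-SL-Sp}, a factor of type \eqref{U-minus} (resp.\ \eqref{U-plus}) becomes an honest lower (resp.\ upper) unitriangular matrix in $SL_{2n}(R)$. Every $(2n)\times(2n)$ unitriangular matrix $U$ over a commutative ring satisfies $U=\exp(\log U)$ with $\log U=\sum_{k\ge1}\tfrac{(-1)^{k+1}}{k}(U-I)^{k}$ a \emph{finite} sum, since $U-I$ is nilpotent; the matrix $\log U$ is strictly triangular, hence nilpotent-valued, and in particular $\mathfrak{sl}_{2n}(\C)$-valued. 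Therefore a factorization of a nullhomotopic holomorphic $f\colon X\to Sp_{2n}(\C)$ into $K$ factors of the forms \eqref{U-minus}, \eqref{U-plus} with entries in $\O(X)$ yields an expression $f=\exp(N_1)\cdots\exp(N_K)$ with each $N_j\in\O(X,\mathfrak{sl}_{2n}(\C))$. Hence, to prove the bound it suffices to exhibit, for each $d$, a single reduced Stein space $X$ of dimension $d$ and a nullhomotopic holomorphic map $f\colon X\to Sp_{2n}(\C)\subset SL_{2n}(\C)$ that is not a product of fewer than $\lfloor d/(4n^{2}-1)\rfloor-3$ exponentials of holomorphic $\mathfrak{sl}_{2n}(\C)$-valued maps.

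Such an example is furnished by the symplectic instance of \cite{BrudSasa}*{Corollary 13.6}, which transports Phillips' topological lower bound \cite{MR1305876}*{Theorem 5.3} into the Stein setting. The precise shape of the additive constant, and the fact that the complex dimension $d$ rather than the real dimension $2d$ enters the estimate, come from the inequality in \cite{MR4678654}*{Proposition 4.1} --- whose argument applies verbatim in the symplectic case --- together with the Andreotti--Frankel theorem that a $d$-dimensional Stein space has the homotopy type of a CW complex of real dimension at most $d$. The underlying mechanism is the usual degree count: in the classifying fibration $f^{*}\Phi_K$, the symplectic analogue of \eqref{PHI_K}, peeling off one more exponential factor can trivialize cohomology of total degree at most $\dim_\C SL_{2n}(\C)=4n^{2}-1$, whereas the example is designed so that $f$ carries, through the rational cohomology $\Lambda(x_3,x_7,\dots,x_{4n-1})$ of $Sp_{2n}(\C)\simeq Sp(n)$, an obstruction class of degree proportional to $d$; balancing the two degrees forces the claimed number of factors.

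It remains to record the entry $4$ of the maximum, which governs the range where $\lfloor d/(4n^{2}-1)\rfloor-3<4$. For every $d\ge0$ one has $K_{sp}(n,d)\ge4$: over $\C$ a symplectic matrix is in general not a product of three factors of the forms \eqref{U-minus}, \eqref{U-plus} --- for $n=1$ the matrix $\operatorname{diag}(2,\tfrac12)\in SL_2(\C)=Sp_2(\C)$ lies in neither $U^-U^+U^-$ nor $U^+U^-U^+$, and for general $n$ this is subsumed in the classical value $K_{sp}(n,1)=4$ recalled above, cf.\ \cite{MR4442601} --- so a constant map with such a value, on any reduced $d$-dimensional Stein space, is nullhomotopic yet admits no such three-factor factorization, since evaluation at a point would factor the matrix. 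Combining with the previous paragraph yields $K_{sp}(n,d)\ge\max\{4,\lfloor d/(4n^{2}-1)\rfloor-3\}$ for all $n\ge1$ and $d\ge0$.

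The main obstacle is the construction behind \cite{BrudSasa}*{Corollary 13.6}: one must realize Phillips' obstruction-theoretic example --- which naturally lives on a sphere or a product of spheres and is phrased through sections of the bundle built from the iterated exponential map --- by a genuinely holomorphic map on a reduced Stein space of the prescribed complex dimension and, crucially, with values in the subgroup $Sp_{2n}(\C)$ rather than all of $SL_{2n}(\C)$, checking that the cohomological obstruction is destroyed neither by restricting from arbitrary $\mathfrak{sl}_{2n}(\C)$-exponentials to the special symplectic unitriangular factors nor by the inclusion $Sp(n)\hookrightarrow SU(2n)$ of maximal compact subgroups. The remaining ingredients --- the logarithm identity, the basis reordering, the reduction to constant maps, and the bookkeeping of constants --- are routine.
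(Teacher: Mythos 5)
Your proposal is correct and follows essentially the same route as the paper, which likewise obtains this bound by combining the observation that factors of the forms \eqref{U-minus} and \eqref{U-plus} are exponentials of nilpotent, hence $\mathfrak{sl}_{2n}(\C)$-valued, holomorphic matrices (the inequality of \cite{MR4678654}*{Proposition 4.1}) with the Phillips--Brudnyi--Sasane lower bound for exponential factorizations (\cite{BrudSasa}*{Corollary 13.6}), the entry $4$ of the maximum coming from the fact that over $\C$ three unitriangular factors never suffice. Your explicit constant witness $\operatorname{diag}(2,\tfrac12)\oplus I$ for that last point and the denominator $4n^{2}-1=\dim SL_{2n}(\C)$ arising from the passage to $SL_{2n}$ match the intended argument; only the aside invoking Andreotti--Frankel is inessential, since for a lower bound one needs an example realizing $d$-dimensional topology, which the cited corollary supplies.
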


\smallskip

Obviously Schott's factorization implies Theorem \ref{Bao-Sp}. Conversely by the following comparison between the numbers of factors needed, Theorem \ref{Bao-Sp} implies Schott's result.

\begin{Thm}[\cite{Symplectic-Bao}]\label{Omega-to-OmegaTilde}
For $n \ge 4$ we have
$$6 \le K'_{sp}(n,d) \le 7  K_{sp}(n,d).$$
For matrices of smaller sizes, there is a better upper bound
\begin{align*} 
    K'_{sp}(n,d) \le 4  K_{sp}(n,d), \quad \text{ for } n = 2 \text{ and } n=3.
\end{align*}
\end{Thm}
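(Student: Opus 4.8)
The result splits into the algebraic conversion estimates $K'_{sp}(n,d)\le 7K_{sp}(n,d)$ and, for $n=2,3$, $K'_{sp}(n,d)\le 4K_{sp}(n,d)$, together with the lower bound $K'_{sp}(n,d)\ge 6$ for $n\ge4$. The plan for the upper bounds is to take a factorization of $f$ into the general factors \eqref{U-minus}, \eqref{U-plus} and rewrite it, over the same ring $\O(X)$, into one using only the simple factors $\begin{pmatrix} I_n & 0 \\ C & I_n\end{pmatrix}$ and $\begin{pmatrix} I_n & B \\ 0 & I_n\end{pmatrix}$ with $B,C$ symmetric. Concretely: by Theorem \ref{Bao-Sp} write $f=F_1\cdots F_m$ with $m\le K_{sp}(n,d)$, each $F_i$ of type \eqref{U-minus} or \eqref{U-plus}; by those very formulas each $F_i$ is the product of one simple factor and one block-diagonal factor $\begin{pmatrix} A^{-T} & 0 \\ 0 & A\end{pmatrix}$ or $\begin{pmatrix} A^{-1} & 0 \\ 0 & A^{T}\end{pmatrix}$ with $A$ upper unitriangular. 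Since $A$ is unitriangular, $A^{-1}$ is a polynomial in $A$, so every entry that appears stays in $\O(X)$ and no Oka-theoretic input beyond Theorem \ref{Bao-Sp} is needed: what is left is a pure identity in $Sp_{2n}$ over an arbitrary commutative ring.

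The heart of the matter is therefore the claim that each such $F_i$ is a product of at most $7$ simple factors, and of at most $4$ when $n=2,3$. Granting this, concatenating the expansions of $F_1,\dots,F_m$ produces a product of at most $7m\le 7K_{sp}(n,d)$ simple factors (resp.\ $4m\le 4K_{sp}(n,d)$), which is the asserted bound; and since the simple factors are themselves instances of \eqref{U-minus}, \eqref{U-plus}, the inequality $K_{sp}\le K'_{sp}$ holds trivially, so combining the conversion with Theorem \ref{Bao-Sp} also reproves Schott's Theorem \ref{Schott}. To prove the claim one is reduced to writing the block-diagonal factor $\begin{pmatrix} A^{-T} & 0 \\ 0 & A\end{pmatrix}$ with $A$ upper unitriangular as a short product of simple factors and absorbing its outermost term into the unipotent-radical part of $F_i$ by merging two factors of the same type.

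For this reduced claim the plan is to produce an explicit Gauss/Bruhat-type identity adapted to the symplectic symmetry. The $SL_{2n}$ model
\[
\begin{pmatrix} g & 0 \\ 0 & g^{-1}\end{pmatrix}=\begin{pmatrix} I_n & g \\ 0 & I_n\end{pmatrix}\begin{pmatrix} I_n & 0 \\ -g^{-1} & I_n\end{pmatrix}\begin{pmatrix} I_n & g \\ 0 & I_n\end{pmatrix}\begin{pmatrix} I_n & -I_n \\ 0 & I_n\end{pmatrix}\begin{pmatrix} I_n & 0 \\ I_n & I_n\end{pmatrix}\begin{pmatrix} I_n & -I_n \\ 0 & I_n\end{pmatrix}
\]
already exhibits six block factors, but with $g=A^{-T}$ the off-diagonal blocks $g,g^{-1}$ are not symmetric. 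The hard part will be to modify this into an identity in which every intermediate off-diagonal block is symmetric, by conjugating the torus part through the unipotent steps and using crucially that $A$ is unitriangular, so that the data to be symmetrized are strictly triangular; then one must check that the resulting formulas are polynomial in the entries of $A$ and that the total count stays at $7$ after the merge with the unipotent-radical part of $F_i$. For $n=2,3$ the block $\begin{pmatrix} A^{-T} & 0 \\ 0 & A\end{pmatrix}$ is small enough that a direct ad hoc identity, with constant symmetric matrices occupying the auxiliary slots, realizes each factor \eqref{U-minus}, \eqref{U-plus} in four simple factors (for $n=2$ this is an elementary check on $Sp_4$), whereas already $n=4$ requires the full length; making this dichotomy precise, together with the symmetrization, is the main obstacle.

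Finally, for the lower bound $K'_{sp}(n,d)\ge 6$ it suffices to take $f$ a constant map, so that the problem becomes the purely algebraic one of writing a given matrix of $Sp_{2n}(\C)$ as a product of simple factors, and to show that five simple factors do not exhaust $Sp_{2n}(\C)$ when $n\ge4$. The plan is a Bruhat-cell count: after four simple factors $U^-U^+U^-U^+$ certain blocks of the product are already so constrained that one further factor of the complementary type cannot reach a generic element of $Sp_{2n}(\C)$ once $n\ge4$, while for $n=2,3$ no such obstruction appears, consistent with the bound $4K_{sp}(n,d)$ there. This is in the spirit of the algebraic results of Jin--Lin--Xiao \cite{MR4442601} on the number of factors over fields, and one expects to either cite or adapt their bound directly.
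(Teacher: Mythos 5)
The survey does not actually contain a proof of Theorem \ref{Omega-to-OmegaTilde} (it is quoted from \cite{Symplectic-Bao}), so there is nothing in-paper to compare with; judged on its own, your proposal is an outline whose two decisive ingredients are missing. Your reduction for the upper bounds is fine as far as it goes: by the very shape of \eqref{U-minus} and \eqref{U-plus}, each general factor is one simple factor times the block-diagonal part $\begin{pmatrix} A^{-T} & 0 \\ 0 & A\end{pmatrix}$ with $A$ unitriangular, and all entries stay in $\O(X)$. But the entire content of $K'_{sp}\le 7K_{sp}$ and of $K'_{sp}\le 4K_{sp}$ for $n=2,3$ is exactly the step you defer: an identity, uniform in $n$ and valid over an arbitrary commutative ring, expressing $\begin{pmatrix} A^{-T} & 0 \\ 0 & A\end{pmatrix}$ as a product of boundedly many factors $\begin{pmatrix} I_n & B \\ 0 & I_n\end{pmatrix}$, $\begin{pmatrix} I_n & 0 \\ C & I_n\end{pmatrix}$ with \emph{symmetric} $B,C$ depending polynomially on $A$. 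Your $SL_{2n}$ model violates the symmetry constraint, and the obvious repair (decompose unitriangular $A$ into elementary transvections and lift each Levi transvection by commutator identities) yields a bound growing like $n^2$, not the constants $6$/$7$, let alone $3$/$4$ for $n=2,3$; note also that a product of three simple factors $ULU$ necessarily has symmetric lower-left block, so the naive $SL_2$-style Gauss reduction does not transfer. You yourself label this ``the main obstacle'', and you give no reason why the threshold between the $4$-factor and $7$-factor conversions should sit at $n=4$. So the heart of the upper bounds is not proved.

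The lower bound $6\le K'_{sp}(n,d)$ for $n\ge 4$ is likewise unproved. Reducing to constant maps and asserting that five simple factors do not exhaust $Sp_{2n}(\C)$ is unsubstantiated: dimension counting does not even exclude four factors, since $4\cdot \tfrac{n(n+1)}{2}\ge n(2n+1)$, and the paper you propose to ``cite or adapt directly'', Jin--Lin--Xiao \cite{MR4442601}, proves that five unit triangular symplectic factors always suffice (and are optimal) in their setting, so it cannot deliver a lower bound of six. If their five-factor bound persists over $\C$ for all $n$, a constant-map reduction can never give $6$, and the obstruction must be genuinely parametric (a degree-type argument in the spirit of the $L(2,2)\ge 5$ lemma in Section \ref{sec: cont}, already needed at $d=1$ in view of Proposition \ref{bao-bounds}); if instead there is a field-level obstruction over $\C$ special to $n\ge 4$, that is a new algebraic fact you would have to prove. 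In either case your proposal supplies no argument, so both the conversion identities and the lower bound remain open in your write-up.
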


Moreover, combining Theorem \ref{Omega-to-OmegaTilde}
with Lemma \ref{SL2-4-factor}, Theorem \ref{theorem:tavgen} and \cite[Theorem 1]{MR2869067} (cf.\@ \cite[Theorem 3.1]{MR4678654}),
in certain cases we are able to give explicit bounds for the number of factors in Schott's theorem as follows:

\begin{Prop}[\cite{Symplectic-Bao}] \label{bao-bounds}
    The number of factors for $N \ge n+1$ is bounded by the number for $n$:
    \begin{align*} \label{K(N,d)}
        K'_{sp}(N,d) \le 7 K_{sp}(n,d).  
    \end{align*}
    When the Stein space $X$ is one-dimensional,
    \begin{align*}
        5 &\le K'_{sp} (n,1)  \le 16 \ \  \text{for} \ \ n=2,3,  \\
   6 &\le K'_{sp} (n,1)  \le 28 \ \ \text{for } \  n \ge 4.  
    \end{align*}
    When the Stein space $X$ is two-dimensional,
    \begin{align*}   
   5 &\le K'_{sp} (n,2)  \le 20 \ \ \text{for} \ \ n=2,3, \\
   6 &\le   K'_{sp}(n,2) \le  35  \ \  \text{for } \ n \ge 4. 
  \end{align*}
\end{Prop}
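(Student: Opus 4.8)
The plan is to derive every inequality from Theorem~\ref{Omega-to-OmegaTilde}, which compares the number $K'_{sp}$ of restricted factors of Theorem~\ref{Schott} with the number $K_{sp}$ of the factors \eqref{U-minus}, \eqref{U-plus} of Theorem~\ref{Bao-Sp}, by feeding into it the values of $K_{sp}(n,1)$ and $K_{sp}(n,2)$ together with the lower bounds already recorded after Theorems~\ref{Bao-Sp} and \ref{Omega-to-OmegaTilde}.

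First I would pin down $K_{sp}$ in the two relevant dimensions. For $d=1$ one has $K_{sp}(n,1)=4$ for all $n$: the ring $\O(X)$ of a one-dimensional Stein space has Bass stable rank one by Corollary~\ref{bsr1=dim1}, hence $SL_2(\O(X))=Sp_2(\O(X))$ is a product of four unitriangular matrices by Lemma~\ref{SL2-4-factor}, and Tavgen's trick (Theorem~\ref{theorem:tavgen}), applied along the chain of root systems $C_1\subset C_2\subset\cdots\subset C_n$ whose two rank $\ell-1$ subsystems are of types $A_{\ell-1}$ and $C_{\ell-1}$, propagates the bound $4$ to all $Sp_{2n}$, the auxiliary linear factorizations $SL_m(\O(X))$ again needing only four factors over a ring of Bass stable rank one. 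For $d=2$ it suffices to combine the inequality $K_{sp}(n,d)\le K_{sp}(1,d)=K(2,d)$ recorded after Theorem~\ref{Bao-Sp} with $K(2,2)=5$ of Ivarsson--Kutzschebauch \cite{MR2869067}*{Theorem 1}, which gives $K_{sp}(n,2)\le5$.

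The upper bounds are then obtained by substitution into Theorem~\ref{Omega-to-OmegaTilde}: for $n=2,3$ that theorem gives $K'_{sp}(n,d)\le 4K_{sp}(n,d)$, hence $K'_{sp}(n,1)\le16$ and $K'_{sp}(n,2)\le20$; for $n\ge4$ it gives $K'_{sp}(n,d)\le 7K_{sp}(n,d)$, hence $K'_{sp}(n,1)\le28$ and $K'_{sp}(n,2)\le35$. The first displayed inequality, $K'_{sp}(N,d)\le7K_{sp}(n,d)$ for $N\ge n+1$, follows by combining $K'_{sp}(N,d)\le7K_{sp}(N,d)$ from Theorem~\ref{Omega-to-OmegaTilde} (with the stronger factor $4$ when $N=2,3$) with the monotonicity $K_{sp}(N,d)\le K_{sp}(n,d)$ for $N\ge n$, itself an iteration of Tavgen's trick along $C_n\subset\cdots\subset C_N$.

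For the lower bounds, when $n\ge4$ the inequality $K'_{sp}(n,d)\ge6$ is the lower half of Theorem~\ref{Omega-to-OmegaTilde} and is valid for all $d$, in particular for $d=1,2$. When $n=2,3$ one needs $K'_{sp}(n,d)\ge5$, and this is the only point that is not bookkeeping: it requires a genuine non-factorization statement for the restricted factors, which --- in the spirit of the $SL_2$ argument behind $K(2,2)=5$ --- one gets by exhibiting a nullhomotopic holomorphic map from a one- (resp.\ two-) dimensional reduced Stein space into $Sp_4(\C)$ or $Sp_6(\C)$ every continuous factorization of which into four restricted factors is forced through the singular locus of the associated parameter fibration, so that the Oka principle for stratified elliptic submersions cannot promote it to a holomorphic factorization; this is the content of \cite{MR2869067}*{Theorem 1} (cf.\@ \cite{MR4678654}*{Theorem 3.1}). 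I expect this last step to be the main obstacle: everything else is routine manipulation of Theorem~\ref{Omega-to-OmegaTilde}, Lemma~\ref{SL2-4-factor} and Tavgen's trick, whereas the bound $K'_{sp}(n,d)\ge5$ for $n=2,3$ is where the real work lies, and if one tried to reach it by transporting the $SL_2$ obstruction along the embedding $Sp_2\hookrightarrow Sp_{2n}$ acting on a single symplectic plane one would have to cope with the fact that a restricted $Sp_{2n}$-factorization of the embedded map need not respect the block decomposition, forcing the use of an invariant natural for that embedding rather than of the factorization itself.
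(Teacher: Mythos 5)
Your treatment of the upper bounds and of the displayed inequality $K'_{sp}(N,d)\le 7K_{sp}(n,d)$ follows the route the paper itself indicates: feed Theorem~\ref{Omega-to-OmegaTilde} with $K_{sp}(n,1)=4$ (Corollary~\ref{bsr1=dim1}, Lemma~\ref{SL2-4-factor}, Theorem~\ref{theorem:tavgen}) and with $K_{sp}(n,2)\le K_{sp}(1,2)=K(2,2)=5$ (\cite{MR2869067}), and read off $16,20,28,35$; the bound $6\le K'_{sp}(n,d)$ for $n\ge 4$ is indeed the lower half of Theorem~\ref{Omega-to-OmegaTilde}. One caution on the first display: when you claim ``monotonicity $K_{sp}(N,d)\le K_{sp}(n,d)$ by iterating Tavgen along $C_n\subset\cdots\subset C_N$'', recall that Theorem~\ref{theorem:tavgen} requires \emph{both} rank-$(l-1)$ subsystems, hence also the type $A_{l-1}$ branch, to factor with the same number of factors; this is harmless for the explicit numerical bounds (which only use $K_{sp}(n,d)\le K_{sp}(1,d)=K(2,d)$, recorded after Theorem~\ref{Bao-Sp}), but as stated your monotonicity step is not justified by Tavgen alone.

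The genuine gap is the lower bound $5\le K'_{sp}(n,d)$ for $n=2,3$. You attribute it to \cite{MR2869067}*{Theorem 1} (cf.\ \cite{MR4678654}*{Theorem 3.1}), but those results concern unitriangular factorization of holomorphic maps into $SL_2(\C)$ over two-dimensional Stein spaces; they say nothing about the restricted symplectic factors of Theorem~\ref{Schott}, and they cannot possibly yield the case $d=1$: over a one-dimensional Stein space $\O(X)$ has Bass stable rank one, so every holomorphic map into $SL_2(\C)$ — and, with the general factors \eqref{U-minus}, \eqref{U-plus}, into $Sp_{2n}(\C)$ — factors into four factors, so there is no parametric $SL_2$-type obstruction in dimension one to transport. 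The point of $5\le K'_{sp}(n,1)$ is precisely that the restricted factors (symmetric off-diagonal blocks only) are strictly weaker than the factors \eqref{U-minus}, \eqref{U-plus}: the obstruction must already be present for the restricted factorization itself, and in fact it lives at the level of constant matrices, i.e.\ over the field $\C$, where four unit triangular symplectic factors do not suffice for $Sp_{2n}(\C)$, $n\ge 2$ (the optimality question discussed by Jin--Lin--Xiao \cite{MR4442601}, cited after Theorem~\ref{Schott}, resp.\ the argument in \cite{Symplectic-Bao}); since constant maps are nullhomotopic holomorphic maps from any Stein space, such a field-level statement gives $K'_{sp}(n,d)\ge 5$ for all $d\ge 0$ at once. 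You correctly sensed that pushing the $SL_2$ obstruction through the embedding $Sp_2\hookrightarrow Sp_{2n}$ is problematic, but you then left the step open; as written, the $n=2,3$ lower bounds — in particular both entries with $d=1$ — are not established by your argument.
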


\section{Vector bundle automorphisms} \label{sec: VB}

We consider a holomorphic vector bundle $E \to X$ over a Stein space $X$.
By $E_x$ we denote the fiber of the bundle $E$ over the point $x \in X$.
Let $F \colon X \to \SAut(E)$ be a special holomorphic vector bundle automorphism of $E$. Here special refers to the fact that the holomorphic function $\det F \colon X \to \C^\ast$ is constantly $1$. Recall from \cite{MR0098196} that $F$ is a global section in the automorphism bundle $\Aut(E)$, which is a holomorphic
fibre bundle with typical fibre $GL_n(\C)$, and it is a group bundle in the sense of Cartan \cite{MR0098196}. The condition that the determinant of $F$ (recall that the determinant of an endomorphism of a vector space is defined independently of the choice of a basis of the vector space) can be formulated as $F$ being a section in the subbundle $\SAut(E) \subset \Aut(E)$, which is again a  group bundle, where the fibre $GL_n(\C)$ has been replaced by $SL_n(\C)$, i.e., the transition functions (acting by conjugation, i.e.\@, inner automorphisms of $GL_n(\C)$) remain the same. 

We call an automorphism of $E$, $\alpha \in \Gamma(X,\Aut(E))$, \textit{unipotent} if and only if $\alpha - Id$ is nilpotent, viewed as an endomorphism of the fibres of $E$, i.e., for each point $x$, the linear map $\alpha (x) - Id_{E_x} \colon E_x \to E_x$ is nilpotent. Clearly such $\alpha$ is necessarily of determinant $1$, i.e., $\alpha \in \Gamma (X, \SAut(E))$. We denote the subset of unipotent global holomorphic sections  by $\U(E)$. 

The previous continuous and holomorphic cases correspond to trivial vector bundles $X \times \C^r$, where the special automorphism bundle $\SAut(E) \cong X \times SL_r(\C)$. The sections of $\SAut(E)$ are simply continuous or holomorphic maps from $X$ to $SL_r(\C)$. We have the canonical set of constant holomorphic sections of the form
\[
    \{ I_r + \C e_{ij}, i \neq j \},
\]
which generates $SL_r(E_x)$ over each point $x \in X$. Here $e_{ij}$ is the elementary matrix with $1$ at the $(i,j)$ entry and 0 else where. Our holomorphic factorization problem, Theorem \ref{IK-SL}, is equivalent to writing any holomorphic section $F$ of $\SAut(E)$ as a product 
\[
    F(x) = \prod_{k=1}^K \left(  \prod_{i \neq j} (I_r + f_{ij}^{(k)}(x) e_{ij} ) \right) \text{ for all } x \in X,
\]
with holomorphic functions $f_{ij}^{(k)} \in \O(X)$. 

For a nontrivial vector bundle $E \to X$, one first needs to find a set of global holomorphic nilpotent sections $N_i, i = 1, \dots, L$ of $\End(E) \to X$ such that 
\begin{align*}
    \{ Id_{E_x} + z N_i(x) : z \in \C, \, i = 1, \dots, L \} \text{ generate } SL_r(E_x) \text{ for all } x \in X.
\end{align*}

Based on this set of globally generating unipotent automorphisms, the goal is to factor a given special automorphism, a section $F \colon X \to \SAut(E)$, in the following manner: Find coefficient holomorphic functions $f_{ij}$ on $X$ such that 
\begin{align} \label{F-SAut}
    F(x) = \prod_{j=1}^K \left(  \prod_{i=1}^L (Id_{E_x} + f_{ij}(x) N_i(x) ) \right) \text{ for all } x \in X. 
\end{align}
The construction of the set of $N_i$ and the solution of this factorization problem has been worked out for rank 2 bundles by George Ioni\c t\u a and Kutzschebauch, leading to the following result.

\begin{Thm}[\cite{MR4840978}]\label{specialVB} Let $X$ be a reduced finite dimensional Stein space and $E \to X$ a rank $2$ holomorphic vector bundle over $X$. Then  a holomorphic section $F$ of $\SAut(E)$ is a product of unipotent holomorphic sections $u_i \in \U (E)$, $i = 1, 2, \ldots, K$,
$$F(x) = u_1 (x) \cdot u_2 (x) \cdot \ldots \cdot u_K (x) \, \text{ for all } x \in X
$$
if and only if $F$ is nullhomotopic.
\end{Thm}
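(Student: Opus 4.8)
First I would dispose of the ``only if'' implication exactly as in Section~\ref{sec: nullhom}: if $F = u_1 \cdots u_K$ with each $u_i \in \U(E)$, then $u_i(x) - Id_{E_x}$ is nilpotent on the rank $2$ fibre $E_x$, so $\log u_i = \sum_{k\ge 1} \frac{(-1)^{k+1}}{k}(u_i - Id)^k$ is a finite sum and defines a holomorphic section of $\End(E)$; then $t \mapsto \exp(t \log u_i)$ is a homotopy of holomorphic sections of $\SAut(E)$ from $Id$ to $u_i$, and multiplying these homotopies exhibits $F$ as nullhomotopic.

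\textbf{The converse: overall plan.} For a nullhomotopic holomorphic $F$ the plan is to run the Ivarsson--Kutzschebauch scheme behind Theorem~\ref{IK-SL}, now relative to the bundle $E \to X$. The first ingredient is a \emph{global} supply of generating unipotents: using Cartan's Theorems~A and~B on the Stein space $X$ one produces finitely many nilpotent holomorphic sections $N_1, \dots, N_L$ of $\End(E)$ such that $\{\, Id_{E_x} + z N_i(x) : z \in \C,\ i = 1, \dots, L \,\}$ generates $SL(E_x) \cong SL_2(\C)$ for every $x \in X$; concretely, global sections of $E$ and of $E^{*}$ generate those bundles (finite dimensionality plus Theorem~A), suitable rank-one combinations land in the nilpotent cone of $\mathfrak{sl}(E)$, and one arranges that they separate the two fibrewise unipotent subgroups. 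With the $N_i$ fixed, the factorization problem \eqref{F-SAut} is encoded by the holomorphic bundle morphism over $X$
\[
  \Phi_K \colon X \times \C^{LK} \longrightarrow \SAut(E), \qquad \bigl(x, (f_{ij})\bigr) \mapsto \prod_{j=1}^{K} \prod_{i=1}^{L} \bigl( Id_{E_x} + f_{ij} N_i(x) \bigr),
\]
the bundle analogue of \eqref{PHI_K}.

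\textbf{The Oka step.} As in the $SL_n$ case I would compose $\Phi_K$ with a ``last-row''-type projection $\pi \colon \SAut(E) \to W$ onto an auxiliary bundle $W$ over $X$ (built, after choosing global sections of $E$ and $E^{*}$, from a quotient line bundle of $E$), and show that over its set of regular points $\pi \circ \Phi_K$ is a \emph{stratified elliptic submersion} in the sense of Forstneri{\v c}: the target stratifies into finitely many locally closed pieces over each of which the restriction is a holomorphic submersion carrying fibre-dominating sprays, written down explicitly from the additive structure of the unipotent factors. Given our nullhomotopic holomorphic $F$, a continuous factorization of $F$ into continuous unipotent sections -- a group-bundle version of Vaserstein's Theorem~\ref{vaserstein}, where the finite dimensionality of $X$ and the nullhomotopy hypothesis enter -- provides a continuous section of the pullback fibration $(\pi \circ \Phi_K)^{*}$ over $X$; Forstneri{\v c}'s Oka principle for stratified elliptic submersions deforms it to a holomorphic section, i.e.\ to a holomorphic product of unipotents whose $\pi$-image agrees with that of $F$. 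Correcting by one further unipotent factor on the left (the one whose image under $\pi$ is trivial) then yields the desired factorization of $F$, completing the equivalence.

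\textbf{Where the difficulty lies.} The step I expect to be the genuine obstacle is the verification of stratified ellipticity of the twisted map $\pi \circ \Phi_K$: this is the place where Ivarsson--Kutzschebauch do their hardest work in the $SL_n$ setting, and here it is compounded by the fact that $E$ need not split or be trivial, so one cannot reduce to a single model target $SL_2(\C)$ and must carry out the stratification and the spray construction uniformly along the varying fibre $SL(E_x)$. A secondary difficulty is the continuous factorization for the nontrivial group bundle $\SAut(E)$: one must either extend Vaserstein's proof to sections of a fibre bundle with fibre $SL_2(\C)$ and conjugation structure group, or reduce it to the trivial case by a modification compatible with a splitting $E \oplus E' \cong X \times \C^{N}$.
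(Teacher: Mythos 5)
Your outline does follow the route that the survey attributes to \cite{MR4840978}: the ``only if'' direction by taking $\log$ and $\exp$ of unipotent sections, and the converse by (i) producing finitely many globally generating nilpotent sections $N_i$ of $\End(E)$, (ii) solving \eqref{F-SAut} first with continuous coefficients as a bundle version of Vaserstein's Theorem \ref{vaserstein}, and (iii) upgrading to holomorphic coefficients by an Oka principle. But as a proof it has genuine gaps, and they sit exactly at the places you yourself flag. First, the construction of the $N_i$ is not routine and is where the rank~$2$ hypothesis actually enters: your phrase ``suitable rank-one combinations land in the nilpotent cone \dots\ and one arranges that they separate the two fibrewise unipotent subgroups'' is a wish, not an argument, and the fact that the theorem is stated (and proved in \cite{MR4840978}) only for rank $2$ bundles indicates that this step and everything downstream of it depend on it in an essential way; your sketch gives no reason why the construction should not work in any rank, so it cannot be the whole story. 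Second, the stratified ellipticity of the composed map $\pi\circ\Phi_K$ for a nontrivial, possibly non-split bundle is asserted rather than verified; in the trivial-bundle case this verification is the core of Ivarsson--Kutzschebauch's proof of Theorem \ref{IK-SL}, and you provide neither the stratification nor the sprays for the twisted situation. Third, the continuous input is left as an alternative (``extend Vaserstein's proof \dots\ or reduce to the trivial case by a splitting $E\oplus E'\cong X\times\C^N$'') with neither branch carried out; note that the known continuous bundle result, Theorem \ref{Hultgren-Wold}, does \emph{not} produce a continuous solution of \eqref{F-SAut} and, as remarked in Section \ref{sec: VB}, cannot serve as Oka input, so this step is genuinely part of the content of \cite{MR4840978} and not a routine adaptation.

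There is also a structural omission relative to the paper: the proof indicated there uses, besides Forstneri{\v c}'s Oka principle for stratified elliptic submersions, the Oka principle of Forster and Ramspott \cite{MR0212211} for Oka pairs of sheaves of nonabelian groups, which is needed to handle the group-bundle (non-constant fibre) aspect of $\SAut(E)$; your scheme has no counterpart to this ingredient. In short, the plan is the right one, but the three technical pillars --- globally generating nilpotents in rank $2$, stratified ellipticity of the twisted factorization map, and the continuous factorization of sections of $\SAut(E)$ --- are named rather than established, so the proposal does not yet constitute a proof.
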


The proof uses again the Oka principle by Forstneri{\v c}, and moreover the Oka principle by Otto Forster and Karl Ramspott \cite{MR0212211}. As input for the Oka principle, one first has to solve \eqref{F-SAut} with continuous $f_{ij}$'s. This can be seen as a version of Vaserstein's topological factorization over the trivial bundle. 
\medskip

We would like to remark that in the continuous category 
Jakob Hultgren and Erlent F.\@ Wold \cite{MR4236641} proved unipotent factorizations of automorphisms of complex vector bundles over finite dimensional locally finite CW-complexes.

\begin{Thm}[\cite{MR4236641} Theorem 1]\label{Hultgren-Wold}
Let $X$ be a locally finite, finite dimensional CW-complex and $E \rightarrow X$ be a complex vector bundle 
of rank $r \ge 2$. 
Let $F$ be a nullhomotopic continuous special vector bundle automorphism of $E$, then there exist unipotent continuous vector bundle automorphisms
$u_1,\ldots, u_K$ such that 
$$
F = u_1 \circ  u_2 \circ \cdots \circ u_K.
$$
\end{Thm}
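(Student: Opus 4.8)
The plan is to reduce the statement to Vaserstein's theorem on the trivial bundle and then to \emph{destabilize}. Since $X$ is a locally finite, finite dimensional CW-complex (hence a paracompact finite dimensional space), the bundle $E$ is a direct summand of a trivial bundle, so I would pick a complement $E'$ with $E\oplus E'\cong X\times\C^N=:\underline{\C}^N$. Given a nullhomotopic special automorphism $F$ of $E$, put $\hat F:=F\oplus\operatorname{Id}_{E'}$; this is a continuous automorphism of $\underline{\C}^N$ of determinant one, that is, a map $X\to SL_N(\C)$, and it is nullhomotopic because a nullhomotopy of $F$ through automorphisms of $E$, extended by the constant $\operatorname{Id}_{E'}$, is a nullhomotopy of $\hat F$. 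A locally finite finite dimensional CW-complex is a finite dimensional normal space, so Vaserstein's Theorem~\ref{vaserstein} yields $\hat F=\hat u_1\cdots\hat u_m$ with each $\hat u_i$ a unitriangular matrix over $\mathcal C(X)$ — equivalently, a unipotent continuous automorphism of $\underline{\C}^N$ — and with $m\le L(N,\dim X)$.

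The substantive point is to pass from this factorization of $\hat F$ to a factorization of $F$ into unipotent automorphisms of $E$ itself, since the $\hat u_i$ need not preserve the subbundle $E\subset\underline{\C}^N$. I would carry this out by induction over the skeleta $X^{(k)}$: assuming that $F|_{X^{(k-1)}}$ is already a product of a uniformly bounded number of unipotent automorphisms of $E|_{X^{(k-1)}}$, one extends this factorization over each $k$-cell $\sigma$. On $\sigma$ the bundle $E|_\sigma$ is trivial and $F|_\sigma$ is nullhomotopic ($\sigma$ being contractible), so the extension problem becomes a relative lifting problem into a fibre of the pull-back of the product map $\Phi_K$ of \eqref{PHI_K}; the obstruction lies in $\pi_{k-1}$ of that fibre, and for $K$ large enough these fibres are sufficiently highly connected — by the connectivity estimate underlying Vaserstein's theorem, for which the finite dimensionality of $X$ is essential — so the obstruction vanishes and the factorization extends across the $k$-skeleton. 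Assembling the skeleton-wise factorizations by obstruction theory (one may also phrase this via a Forster--Ramspott type homotopy principle \cite{MR0212211} for the group bundle $\SAut(E)$) yields the global factorization $F=u_1\cdots u_K$. The converse implication, though not part of the statement, explains the hypothesis: each $u_i=\exp(\nu_i)$ for a nilpotent section $\nu_i$ of $\End(E)$, so $t\mapsto\exp(t\nu_i)$ joins $u_i$ to the identity, and a product of nullhomotopic automorphisms is nullhomotopic, exactly as in Section~\ref{sec: nullhom}.

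The main obstacle is precisely this destabilization: converting the factors of $\hat F$ into factors compatible with the subbundle $E$ while keeping their number globally bounded. In effect one needs a relative version of Vaserstein's theorem for the pair $E\subset\underline{\C}^N$, and the delicate parts are controlling the connectivity of the fibres entering the cell-by-cell extension and ensuring that the extensions over different cells are mutually compatible. This is the technical heart of Hultgren--Wold's argument, and it is where both the CW structure (providing the skeletal filtration) and the finite dimensionality of $X$ (bounding the relevant obstruction groups) are indispensable.
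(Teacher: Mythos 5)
Your reduction-plus-destabilization plan does not actually close. The first paragraph (stabilize to $E\oplus E'\cong X\times\C^N$ and apply Theorem \ref{vaserstein} to $F\oplus\operatorname{Id}$) is correct as far as it goes, but the resulting unitriangular factors do not preserve $E$, and you never use this factorization again: the second paragraph is an independent obstruction-theoretic sketch, and it is exactly there that the gap sits. To extend a factorization of $F$ from $X^{(k-1)}$ across a $k$-cell you would need the relevant product map to be a fibration with a fixed, sufficiently connected fibre, and you justify this by an alleged ``connectivity estimate underlying Vaserstein's theorem.'' No such estimate exists: Vaserstein's proof proceeds by row reduction and dimension-theoretic extension of unimodular rows, not by connectivity of the fibres of $\Phi_K$, and $\Phi_K$ in \eqref{PHI_K} is not a fibration --- its fibres change homotopy type and degenerate over special matrices (for $SL_2$ the fibre over $a=1$ is the cross $g_2g_3=0$, the very degeneration this survey exploits to prove $L(2,2)\ge 5$), so ``the obstruction lies in $\pi_{k-1}$ of the fibre'' is unfounded. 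Worse, once $E$ is nontrivial your factors on $X^{(k-1)}$ are merely unipotent sections of $\End(E)$, not unitriangular in any fixed frame, so over a cell the lifting problem concerns a product map from copies of the singular unipotent cone (for $r=2$ already a quadric cone, singular at the identity), not $\Phi_K$ at all; classical obstruction theory does not apply to such a map without substantial extra work. Since you yourself label this step ``the technical heart,'' the proposal is an outline of a hoped-for argument rather than a proof.

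For comparison: the paper does not prove this statement (it is quoted from Hultgren--Wold), but it does record the decisive feature of their actual argument, namely that the unipotent factors $u_i$ they construct are constantly the identity on open subsets of $X$. Their mechanism is thus a localization one --- write $F$ as a product of automorphisms supported in trivializing open sets, factor each of these by a Vaserstein-type argument with control near the boundary of its support, and extend by the identity --- not a skeletal obstruction-theory destabilization of a stabilized factorization. If you want to pursue your route, you would at minimum need to replace the connectivity claim by a genuine relative/parametric version of Vaserstein's theorem for the pair $E\subset X\times\C^N$, which is precisely what is missing.
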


Note that their solution is different from being a continuous solution to \eqref{F-SAut}, and cannot be used as an input for an Oka principle. In fact, the unipotent vector bundle automorphisms $u_i$ are constantly identity on open subsets of $X$. 

There is also a symplectic result by the same authors.

\begin{Thm}[\cite{MR4236641} Theorem 3]\label{HultgrenWold-Sp}
Let $X$ be a locally finite, finite dimensional CW-complex  and $E \rightarrow X$ be a complex symplectic vector bundle over $X$. Let $F$
be a nullhomotopic continuous symplectic vector bundle automorphism of $E$, then there exist unipotent continuous symplectic vector bundle automorphisms
$u_1, \ldots, u_K$ such that 
$$
F= u_1 \circ u_2 \circ \cdots \circ u_K.
$$
\end{Thm}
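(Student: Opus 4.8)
The plan is to run the proof of Theorem \ref{Hultgren-Wold} with $SL_n(\C)$ systematically replaced by $Sp_{2n}(\C)$, using the symplectic topological factorization of Theorem \ref{IKL-Sp-Cont} as the engine in place of Vaserstein's $SL_n$ theorem. The proof is an induction over the skeleta of $X$; the point of passing to skeleta is that each individual cell is contractible, so $E$ is \emph{symplectically} trivial over it and Theorem \ref{IKL-Sp-Cont} applies there. The unipotent factors produced over the new cells are arranged to be the identity near the boundary of each cell, so that they patch to globally defined unipotent symplectic automorphisms of $E$ and so that, up to reindexing, all cells of a given dimension contribute the same finitely many global factors (their non-identity loci lying in the pairwise disjoint open cells). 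Because $\dim X = d < \infty$, this keeps the total number $K$ finite even when $X$ has infinitely many cells.

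Before the induction I would record two elementary facts that are special to the symplectic setting. First, over a contractible set a symplectic vector bundle admits a symplectic trivialization, so a symplectic automorphism over such a set is a map into $Sp_{2n}(\C)$ and a unipotent one is a map into the unipotent variety $\mathcal U_{sp}\subset Sp_{2n}(\C)$. Second, the maps $\log$ and $\exp$ (which are polynomial and mutually inverse here because $\log u$ and $\log^{-1}$ are finite sums on unipotents) identify $\mathcal U_{sp}$ with the nilpotent cone $\mathcal N_{\mathfrak{sp}}\subset\mathfrak{sp}_{2n}(\C)$, which is star-shaped about $0$, hence contractible. Consequently the associated subbundle $\U(E)\cap\mathrm{(symplectic)}\subset\SAut(E)$ of unipotent symplectic automorphisms is a fibre bundle with contractible fibre, so every unipotent symplectic automorphism defined over a subcomplex of $X$ extends to one defined over all of $X$ (the obstructions in $H^{j+1}(X,A;\pi_j(\mathcal U_{sp}))$ all vanish), and every unipotent symplectic automorphism $u$ is joined to the identity by the canonical homotopy $t\mapsto\exp(t\log u)$ through unipotent symplectic automorphisms, which is stationary wherever $u=Id$. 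These replace the corresponding $SL_n$-facts used by Hultgren--Wold, and Theorem \ref{IKL-Sp-Cont} already delivers symplectic unipotent factors, so no further symplectic input is needed.

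Now for the induction. Write $X^{(k)}$ for the $k$-skeleton and fix a nullhomotopy $H\colon[0,1]\times X\to\SAut(E)$ of $F$. I would prove, by induction on $k=-1,0,\dots,d$, the existence of finitely many unipotent symplectic automorphisms $u_1,\dots,u_{N_k}$ of $E$ such that $E_k:=(u_1\circ\cdots\circ u_{N_k})^{-1}\circ F$ equals $Id$ over $X^{(k)}$ and is still nullhomotopic (in the relative sense needed below); for $k=d$ this forces $E_d\equiv Id$, i.e.\ $F=u_1\circ\cdots\circ u_{N_d}$. In the step from $k-1$ to $k$, for each $k$-cell $\sigma$ with characteristic map $\varphi_\sigma\colon D^k\to X$ the pullback $\varphi_\sigma^{*}E_{k-1}$ is, after fixing a symplectic trivialization, a map $g_\sigma\colon D^k\to Sp_{2n}(\C)$ that is the identity on $S^{k-1}$ (since $\varphi_\sigma(S^{k-1})\subset X^{(k-1)}$ and $E_{k-1}=Id$ there); tracking $H$ through the induction one arranges that the induced map $S^k=D^k/S^{k-1}\to Sp_{2n}(\C)$ is nullhomotopic, so Theorem \ref{IKL-Sp-Cont} factors it into $L_{sp}(n,k)$ unipotent symplectic factors. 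Pulling these back to $D^k$, extending each by $Id$ over $X^{(k-1)}$ (they are already $Id$ on $S^{k-1}$) and then over all of $X$ by the extension property above, and collecting the $j$-th factors of all $k$-cells into a single global automorphism, one obtains finitely many new global unipotent symplectic automorphisms whose product equals $E_{k-1}$ over $X^{(k)}$; appending them to the $u_i$ gives $E_k=Id$ over $X^{(k)}$, and the canonical homotopies of the new factors combined with the nullhomotopy of $E_{k-1}$ exhibit $E_k$ as nullhomotopic, closing the step.

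The main obstacle is precisely the homotopy-theoretic bookkeeping in the inductive step: one must ensure at each stage that the remaining automorphism $E_k$ is nullhomotopic in the strong sense (essentially, relative to $X^{(k)}$) needed to apply Theorem \ref{IKL-Sp-Cont} over the next batch of cells, which forces one to carry a compatible nullhomotopy — not merely the factorization — through the induction, and it is here that one must control the free homotopies contributed by the lower skeleta (otherwise the element of $\pi_k(Sp_{2n}(\C))$ represented by $g_\sigma$ on $S^k$ need not vanish). A convenient device for taming this, which I would use, is to first subdivide $[0,1]$ finely and write $F=s_1\circ\cdots\circ s_m$ with $s_j=H_{t_{j-1}}^{-1}\circ H_{t_j}$, each $s_j$ joined to $Id$ by a short homotopy staying in a prescribed neighbourhood of the identity section; it then suffices to factor such an $s_j$, for which $\log s_j$ is a genuine section of the bundle $\mathfrak{sp}(E)$ and one only has to express it over the skeleta, up to Baker--Campbell--Hausdorff corrections, through sections of the nilpotent subbundles, the nonabelianness of $Sp_{2n}(\C)$ supplying the Cartan directions a naive linear decomposition would miss. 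Everything else is routine once Theorem \ref{IKL-Sp-Cont} and the two symplectic facts above are in hand.
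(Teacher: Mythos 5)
You should first note a mismatch of scope: the paper does not prove Theorem \ref{HultgrenWold-Sp} at all. It is quoted verbatim from \cite{MR4236641}*{Theorem 3}, and the only argument the survey adds is the remark immediately after the statement: Hultgren--Wold proved the symplectic case under the extra hypothesis that the structure group reduces to the maximal compact subgroup, and this hypothesis is automatically satisfied by Steenrod's classical theorem \cite{MR1688579}*{\S 12.5}, since the quotient of $Sp_{2n}(\C)$ by its maximal compact subgroup is contractible. So your attempt to re-derive the theorem by a skeleton-by-skeleton adaptation of Theorem \ref{Hultgren-Wold} is a different undertaking from what the paper does, and judged on its own terms it contains genuine gaps rather than a complete argument.

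The concrete gaps are the two points you yourself label the main obstacle. First, the engine you invoke, Theorem \ref{IKL-Sp-Cont}, is an absolute statement: it factors a nullhomotopic map on a finite-dimensional normal space, but it gives no control forcing the unipotent factors to be the identity on (a neighbourhood of) $S^{k-1}$ when $g_\sigma$ is the identity there. That relative control is exactly what your induction needs in order to extend the factors by the identity across $X^{(k-1)}$ and to merge the contributions of the disjoint open $k$-cells, and it cannot be extracted from the absolute statement by normalizing boundary values: pushing the constants $u_j(\ast)$ into neighbouring factors destroys unipotency, since products and translates of unipotent matrices are not unipotent. Second, the rel-boundary nullhomotopy is a genuine hypothesis, not bookkeeping: any product of unipotent factors that are the identity on $S^{k-1}$ is nullhomotopic rel $S^{k-1}$ via $\exp(t\log u_j)$, so the class of $g_\sigma$ in $\pi_k(Sp_{2n}(\C))$ must vanish, and your device for arranging this --- subdividing the nullhomotopy into steps $s_j$ close to the identity and working with $\log s_j$ --- breaks down on a non-compact, infinite complex (no finite subdivision of $[0,1]$ keeps $H_{t_{j-1}}^{-1}H_{t_j}$ uniformly in the domain of $\log$ over all of $X$), while the promised Baker--Campbell--Hausdorff rewriting of $\log s_j$ through nilpotent directions is left entirely unproved; for nontrivial bundles the existence and manipulation of enough nilpotent directions is precisely the hard point, compare the discussion around \eqref{F-SAut} and Theorem \ref{specialVB}. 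Your outline is in the spirit of Hultgren--Wold's construction (their factors are indeed constantly the identity on open subsets of $X$), but these two steps would have to be established before the proof stands, whereas the paper's own route sidesteps all of this by citing \cite{MR4236641} and merely removing the structure-group-reduction hypothesis via Steenrod.
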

In fact, they proved this theorem under the additional assumption that the structure group $Sp_{2n}(\C)$ can be reduced to the maximal compact subgroup. By a classical result see Steenrod \cite[\S 12.5]{MR1688579}, this reduction is always possible: If $G$ is a Lie group and $H$ a closed subgroup such that $G/H$ is contractible, then every fiber bundle over $X$ with structure group $G$ is equivalent to a fiber bundle with structure group $H$.

\section{Future developments, open problems}

As a natural extension, we propose to study the analogues of Theorems \ref{IK-SL}, \ref{Schott} and \ref{Bao-Sp} for other simple complex Lie groups, namely the orthogonal groups and the exceptional groups. 

It is also of interest to have more accurate estimates for the number of factors for the various factorizations  discussed in this survey. 

\begin{OP}
    Determine the optimal numbers of factors $L(n,d)$ in Theorem \ref{vaserstein}, $L_{sp}(n,d)$ in Theorem \ref{IKL-Sp-Cont}, $K(n,d)$ in Theorem \ref{IK-SL}, $K'_{sp}(n,d)$ in Theorem \ref{Schott} and $K_{sp}(n,d)$ in Theorem \ref{Bao-Sp}. 
\end{OP}
This is a very ambitious problem, any estimates 
for those numbers for rings of continuous or holomorphic functions on spaces of dimension $3$ or higher would be a success.

\begin{OP}
    Generalize Theorem \ref{specialVB} on rank 2 bundles to any rank $r \ge 2$ and potentially with additional geometric structures, e.g., the structure group being the complex symplectic group $Sp_{2n}(\C)$ or the complex orthogonal group $O_n(\C)$. 
\end{OP}

\medskip

Given a parabolic subgroup $P$ in a Chevalley group $G$, the elementary group $E_P(R)$ is generated by the unipotent radical $U_P(R)$ of $P$ and the unipotent radical $U_{P^-}(R)$ of an opposite parabolic subgroup $P^-$. By a result of Victor Petrov and Stavrova \cite[Theorem 1]{MR2473747}, the elementary group $E_P(R)$ associated to $P$ coincides with the elementary group $E(R)$ associated to the Borel subgroup $B$ contained in $P$. This leads us to 

\begin{OP} \label{parabolic}
    Determine the bounds for number of elementary factors in $U_P(R)$ and $U_{P^-}(R)$ for different choices of $P$ for rings $R$ of continuous or holomorphic functions. 
\end{OP}
For example, if we look at $(n k \times nk )$-matrices of determinant 1 as $(n\times n)$-matrices of determinant 1 with entries being $(k\times k)$-matrices:
$$SL_{nk} (\C) = SL_n (M_k (\C)).$$
Then an example of  a parabolic subgroup $P$ of $SL_{nk} (\C)$ is given by
$$P= \{ A = (A_{ij})_{i,j =1}^n  \in SL_n (M_k (\C)) : A_{ij}=\mathbf{0} \quad \forall i<j \}.$$ 
Theorem 1 of \cite{MR2473747} together with
Theorem \ref{IK-SL} imply for any Stein space $X$ the equalities
$$
    SL_n (M_k (\mathcal{O} (X)))_0= E_n  (M_k (\mathcal{O} (X)) =E_{nk} (\mathcal{O} (X))=  SL_{nk} (\mathcal{O} (X))_0.
$$  
Problem \ref{parabolic}
in this concrete case is asking about the number of factors in factorizing a nullhomotopy holomorphic  $(nk\times nk)$-matrix of determinant 1 into factors having
1's on the diagonal and 0's not only  
below the diagonal but also above the diagonal in the $k\times k$-blocks above the diagonal.

\medskip

As future development, it is also worth to study the commutator width of the path component of $G(R)$ for  rings $R$ of continuous or holomorphic functions. By Corollary 14 in Dennis--Vaserstein \cite{MR0961333}, the commutator width $c(E(R))$ of an elementary group $E(R)$ is related to the number $K$ of unitriangular factors by
\begin{align*}
    c( E(R) ) \le \left\lfloor \frac{K}{2} \right\rfloor + 3.
\end{align*}



\begin{bibdiv}
\begin{biblist}

\bib{MR3769716}{article}{
   author={Brudnyi, Alexander},
   title={On the factorization of matrices over commutative Banach algebras},
   journal={Integral Equations Operator Theory},
   volume={90},
   date={2018},
   number={1},
   pages={Paper No. 6, 8},
   issn={0378-620X},
   review={\MR{3769716}},
   doi={10.1007/s00020-018-2436-8},
}

\bib{MR3898325}{article}{
   author={Brudnyi, Alexander},
   title={On the Bass stable rank of Stein algebras},
   journal={Publ. Res. Inst. Math. Sci.},
   volume={55},
   date={2019},
   number={1},
   pages={109--121},
   issn={0034-5318},
   review={\MR{3898325}},
   doi={10.4171/PRIMS/55-1-4},
}

\bib{MR4357335}{article}{
   author={Brudnyi, Alexander},
   title={On exponential factorizations of matrices over Banach algebras},
   journal={J. Algebra},
   volume={595},
   date={2022},
   pages={132--144},
   issn={0021-8693},
   review={\MR{4357335}},
   doi={10.1016/j.jalgebra.2021.12.020},
}

\bib{BrudSasa}{article}{
   title={On algebras of Dirichlet series invariant under permutations of coefficients}, 
    author={Alexander Brudnyi and Amol Sasane},
    year={2024},
    eprint={2404.03616},
    archivePrefix={arXiv},
}

\bib{MR0098196}{article}{
   author={Cartan, Henri},
   title={Espaces fibr\'{e}s analytiques},
   language={French},
   conference={
      title={Symposium internacional de topolog\'{\i}a algebraica
      International symposium on algebraic topology},
   },
   book={
      publisher={Universidad Nacional Aut\'{o}noma de M\'{e}xico and UNESCO,
   M\'{e}xico},
   },
   date={1958},
   pages={97--121},
   review={\MR{0098196}},
}

\bib{MR4698492}{article}{
   author={Choudhury, Utsav},
   author={Roy, Biman},
   title={$\Bbb A^1$-connected components and characterisation of $\Bbb
   A^2$},
   journal={J. Reine Angew. Math.},
   volume={807},
   date={2024},
   pages={55--80},
   issn={0075-4102},
   review={\MR{4698492}},
   doi={10.1515/crelle-2023-0084},
}

\bib{MR0153696}{article}{
   author={Cohn, P. M.},
   title={Rings with a weak algorithm},
   journal={Trans. Amer. Math. Soc.},
   volume={109},
   date={1963},
   pages={332--356},
   issn={0002-9947},
   review={\MR{0153696}},
   doi={10.2307/1993911},
}

\bib{MR0207856}{article}{
   author={Cohn, P. M.},
   title={On the structure of the ${\rm GL}\sb{2}$ of a ring},
   journal={Inst. Hautes \'Etudes Sci. Publ. Math.},
   number={30},
   date={1966},
   pages={5--53},
}

\bib{MR0806470}{article}{
   author={Corach, Gustavo},
   author={Su\'arez, Fernando Daniel},
   title={Stable rank in holomorphic function algebras},
   journal={Illinois J. Math.},
   volume={29},
   date={1985},
   number={4},
   pages={627--639},
   issn={0019-2082},
   review={\MR{0806470}},
}

\bib{MR0961333}{article}{
   author={Dennis, R. K.},
   author={Vaserstein, L. N.},
   title={On a question of M. Newman on the number of commutators},
   journal={J. Algebra},
   volume={118},
   date={1988},
   number={1},
   pages={150--161},
   issn={0021-8693},
   review={\MR{0961333}},
   doi={10.1016/0021-8693(88)90055-5},
}

\bib{MR3734108}{article}{
   author={Dubouloz, Adrien},
   author={Fasel, Jean},
   title={Families of $\Bbb A^1$-contractible affine threefolds},
   journal={Algebr. Geom.},
   volume={5},
   date={2018},
   number={1},
   pages={1--14},
   issn={2313-1691},
   review={\MR{3734108}},
   doi={10.14231/ag-2018-001},
}

\bib{MR0037362}{article}{
   author={Florack, Herta},
   title={Regul\"are und meromorphe Funktionen auf nicht geschlossenen
   Riemannschen Fl\"achen},
   language={German},
   journal={Schr. Math. Inst. Univ. M\"unster},
   volume={1948},
   date={1948},
   number={1},
   pages={34},
   issn={0077-1961},
   review={\MR{0037362}},
}

\bib{MR0212211}{article}{
   author={Forster, Otto},
   author={Ramspott, Karl Josef},
   title={Okasche Paare von Garben nicht-abelscher Gruppen},
   language={German},
   journal={Invent. Math.},
   volume={1},
   date={1966},
   pages={260--286},
   issn={0020-9910},
   review={\MR{0212211}},
   doi={10.1007/BF01452245},
}

\bib{MR1001851}{article}{
   author={Gromov, M.},
   title={Oka's principle for holomorphic sections of elliptic bundles},
   journal={J. Amer. Math. Soc.},
   volume={2},
   date={1989},
   number={4},
   pages={851--897},
   issn={0894-0347},
   review={\MR{1001851}},
   doi={10.2307/1990897},
}

\bib{MR1086811}{article}{
   author={Grunewald, Fritz},
   author={Mennicke, Jens},
   author={Vaserstein, Leonid},
   title={On symplectic groups over polynomial rings},
   journal={Math. Z.},
   volume={206},
   date={1991},
   number={1},
   pages={35--56},
   issn={0025-5874},
}

\bib{MR4678654}{article}{
   author={Huang, Gaofeng},
   author={Kutzschebauch, Frank},
   author={Schott, Josua},
   title={Factorization of holomorphic matrices and Kazhdan's property (T)},
   journal={Bull. Sci. Math.},
   volume={190},
   date={2024},
   pages={Paper No. 103376, 14},
   issn={0007-4497},
   review={\MR{4678654}},
   doi={10.1016/j.bulsci.2023.103376},
}

\bib{Symplectic-Bao}{article}{
   author={Huang, Gaofeng},
   author={Kutzschebauch, Frank},
   author={Tran, Phan Quoc Bao},
   title={Unitriangular factorization of holomorpic symplectic matrices},
   date={2025},
   eprint={arXiv:2507.18963},
}

\bib{MR4236641}{article}{
   author={Hultgren, Jakob},
   author={Wold, Erlend F.},
   title={Unipotent factorization of vector bundle automorphisms},
   journal={Internat. J. Math.},
   volume={32},
   date={2021},
   number={3},
   pages={Paper No. 2150013, 24},
   issn={0129-167X},
   review={\MR{4236641}},
   doi={10.1142/S0129167X21500130},
}

\bib{MR4840978}{article}{
   author={Ioni\c t\u a, George},
   author={Kutzschebauch, Frank},
   title={Holomorphic factorization of vector bundle automorphisms},
   journal={Bull. Sci. Math.},
   volume={199},
   date={2025},
   pages={Paper No. 103565, 36},
}

\bib{MR2874639}{article}{
   author={Ivarsson, Bj\"orn},
   author={Kutzschebauch, Frank},
   title={Holomorphic factorization of mappings into ${\rm SL}_n(\Bbb C)$},
   journal={Ann. of Math. (2)},
   volume={175},
   date={2012},
   number={1},
   pages={45--69},
   issn={0003-486X},
}

\bib{MR2869067}{article}{
   author={Ivarsson, Bj\"orn},
   author={Kutzschebauch, Frank},
   title={On the number of factors in the unipotent factorization of
   holomorphic mappings into ${\rm SL}_2(\Bbb C)$},
   journal={Proc. Amer. Math. Soc.},
   volume={140},
   date={2012},
   number={3},
   pages={823--838},
   issn={0002-9939},
   review={\MR{2869067}},
   doi={10.1090/S0002-9939-2011-11025-6},
}

\bib{MR4078081}{article}{
   author={Ivarsson, Bj\"orn},
   author={Kutzschebauch, Frank},
   author={L\o w, Erik},
   title={Factorization of symplectic matrices into elementary factors},
   journal={Proc. Amer. Math. Soc.},
   volume={148},
   date={2020},
   number={5},
   pages={1963--1970},
   issn={0002-9939},
}

\bib{MR4578529}{article}{
   author={Ivarsson, Bj\"orn},
   author={Kutzschebauch, Frank},
   author={L\o w, Erik},
   title={Holomorphic factorization of mappings into ${\rm Sp}_4(\Bbb C)$},
   journal={Anal. PDE},
   volume={16},
   date={2023},
   number={1},
   pages={233--277},
}

\bib{MR4442601}{article}{
   author={Jin, Pengzhan},
   author={Lin, Zhangli},
   author={Xiao, Bo},
   title={Optimal unit triangular factorization of symplectic matrices},
   journal={Linear Algebra Appl.},
   volume={650},
   date={2022},
   pages={236--247},
   issn={0024-3795},
   review={\MR{4442601}},
   doi={10.1016/j.laa.2022.06.009},
}

\bib{MR0826488}{article}{
   author={Jones, P. W.},
   author={Marshall, D.},
   author={Wolff, T.},
   title={Stable rank of the disc algebra},
   journal={Proc. Amer. Math. Soc.},
   volume={96},
   date={1986},
   number={4},
   pages={603--604},
   issn={0002-9939},
   review={\MR{0826488}},
   doi={10.2307/2046311},
}

\bib{MR0382400}{article}{
   author={Karoubi, Max},
   title={P\'eriodicit\'e{} de la $K$-th\'eorie hermitienne},
   language={French},
   conference={
      title={Algebraic $K$-theory, III: Hermitian $K$-theory and geometric
      applications},
      address={Proc. Conf., Battelle Memorial Inst., Seattle, Wash.},
      date={1972},
   },
   book={
      series={Lecture Notes in Math.},
      volume={Vol. 343},
      publisher={Springer, Berlin-New York},
   },
   date={1973},
   pages={301--411},
   review={\MR{0382400}},
}

\bib{MR0966022}{article}{
   author={Klein, Manfred},
   author={Ramspott, Karl Josef},
   title={Ein Transformationssatz f\"ur Idealbasen holomorpher Funktionen},
   language={German},
   journal={Bayer. Akad. Wiss. Math.-Natur. Kl. Sitzungsber.},
   date={1987},
   pages={93--100 (1988)},
   issn={0340-7586},
   review={\MR{0966022}},
}

\bib{MR0497932}{article}{
   author={Kope\u iko, V. I.},
   title={Stabilization of symplectic groups over a ring of polynomials},
   language={Russian},
   journal={Mat. Sb. (N.S.)},
   volume={106(148)},
   date={1978},
   number={1},
   pages={94--107, 144},
   issn={0368-8666},
   review={\MR{0497932}},
}

\bib{MR1806868}{article}{
   author={Kope\u iko, V. I.},
   title={Symplectic groups over rings of Laurent polynomials, and patching
   diagrams},
   language={Russian, with English and Russian summaries},
   note={New computer technologies in control systems (Russian)
   (Pereslavl\cprime-Zalesski\u i, 1996)},
   journal={Fundam. Prikl. Mat.},
   volume={5},
   date={1999},
   number={3},
   pages={943--945},
   issn={1560-5159},
   review={\MR{1806868}},
}

\bib{MR4275099}{article}{
   author={Leiterer, J\"urgen},
   title={On holomorphic matrices on bordered Riemann surfaces},
   journal={Bull. Lond. Math. Soc.},
   volume={53},
   date={2021},
   number={3},
   pages={906--916},
   issn={0024-6093},
   review={\MR{4275099}},
   doi={10.1112/blms.12470},
}

\bib{MR0349811}{book}{
   author={Milnor, John},
   title={Introduction to algebraic $K$-theory},
   series={Annals of Mathematics Studies},
   volume={No. 72},
   publisher={Princeton University Press, Princeton, NJ; University of Tokyo
   Press, Tokyo},
   date={1971},
   pages={xiii+184},
   review={\MR{0349811}},
}

\bib{MR4390034}{book}{
   author={Mortini, Raymond},
   author={Rupp, Rudolf},
   title={Extension problems and stable ranks---a space odyssey},
   publisher={Birkh\"auser/Springer, Cham},
   date={[2021] \copyright 2021},
   pages={xxi+2176},
   isbn={978-3-030-73871-6},
   isbn={978-3-030-73872-3},
   review={\MR{4390034}},
   doi={10.1007/978-3-030-73872-3},
}

\bib{MR2610796}{article}{
   author={Mortini, Raymond},
   author={Sasane, Amol},
   author={Wick, Brett D.},
   title={The corona theorem and stable rank for the algebra $\Bbb
   C+BH^\infty$},
   journal={Houston J. Math.},
   volume={36},
   date={2010},
   number={1},
   pages={289--302},
   issn={0362-1588},
   review={\MR{2610796}},
   doi={10.1007/s10476-010-0403-y},
}

\bib{MR2473747}{article}{
   author={Petrov, V. A.},
   author={Stavrova, A. K.},
   title={Elementary subgroups in isotropic reductive groups},
   language={Russian},
   journal={Algebra i Analiz},
   volume={20},
   date={2008},
   number={4},
   pages={160--188},
   issn={0234-0852},
   translation={
      journal={St. Petersburg Math. J.},
      volume={20},
      date={2009},
      number={4},
      pages={625--644},
      issn={1061-0022},
   },
   review={\MR{2473747}},
   doi={10.1090/S1061-0022-09-01064-4},
}

\bib{MR1305876}{article}{
   author={Phillips, N. Christopher},
   title={How many exponentials?},
   journal={Amer. J. Math.},
   volume={116},
   date={1994},
   number={6},
   pages={1513--1543},
}
\bib{MR0286801}{article}{
   author={Ramanujam, C. P.},
   title={A topological characterisation of the affine plane as an algebraic
   variety},
   journal={Ann. of Math. (2)},
   volume={94},
   date={1971},
   pages={69--88},
   issn={0003-486X},
   review={\MR{0286801}},
   doi={10.2307/1970735},
}
\bib{Schott}{article}{
    title={Holomorphic Factorization of Mappings into the Symplectic Group}, 
    author={Josua Schott},
    year={2025},
    journal ={J. Eur. Math. Soc.},
    pages={published online first},
}

\bib{MR3189425}{article}{
   author={Stavrova, Anastasia},
   title={Homotopy invariance of non-stable $K_1$-functors},
   journal={J. K-Theory},
   volume={13},
   date={2014},
   number={2},
   pages={199--248},
   issn={1865-2433},
   review={\MR{3189425}},
   doi={10.1017/is013006012jkt232},
}

\bib{MR4383113}{article}{
   author={Stavrova, Anastasia},
   title={$\Bbb{A}^1$-invariance of non-stable $K_1$-functors in the
   equicharacteristic case},
   journal={Indag. Math. (N.S.)},
   volume={33},
   date={2022},
   number={2},
   pages={322--333},
   issn={0019-3577},
   review={\MR{4383113}},
   doi={10.1016/j.indag.2021.08.002},
}

\bib{MR4048473}{article}{
   author={Stavrova, Anastasia},
   title={Chevalley groups of polynomial rings over Dedekind domains},
   journal={J. Group Theory},
   volume={23},
   date={2020},
   number={1},
   pages={121--132},
   issn={1433-5883},
   review={\MR{4048473}},
   doi={10.1515/jgth-2019-0100},
}

\bib{MR4855206}{article}{
   author={Stavrova, Anastasia},
   title={Chevalley groups over Laurent polynomial rings},
   journal={Zap. Nauchn. Sem. S.-Peterburg. Otdel. Mat. Inst. Steklov.
   (POMI)},
   volume={538},
   date={2024},
   pages={152--159},
   issn={0373-2703},
   review={\MR{4855206}},
}

\bib{MR1688579}{book}{
   author={Steenrod, Norman},
   title={The topology of fibre bundles},
   series={Princeton Landmarks in Mathematics},
   note={Reprint of the 1957 edition;
   Princeton Paperbacks},
   publisher={Princeton University Press, Princeton, NJ},
   date={1999},
   pages={viii+229},
   isbn={0-691-00548-6},
   review={\MR{1688579}},
}

\bib{MR0528869}{article}{
   author={Stein, Michael R.},
   title={Stability theorems for $K\sb{1}$, $K\sb{2}$\ and related functors
   modeled on Chevalley groups},
   journal={Japan. J. Math. (N.S.)},
   volume={4},
   date={1978},
   number={1},
   pages={77--108},
   issn={0289-2316},
   review={\MR{0528869}},
   doi={10.4099/math1924.4.77},
}

\bib{MR0472792}{article}{
   author={Suslin, A. A.},
   title={The structure of the special linear group over rings of
   polynomials},
   language={Russian},
   journal={Izv. Akad. Nauk SSSR Ser. Mat.},
   volume={41},
   date={1977},
   number={2},
   pages={235--252, 477},
}

\bib{MR0469914}{article}{
   author={Suslin, A. A.},
   author={Kope\u iko, V. I.},
   title={Quadratic modules and the orthogonal group over polynomial rings},
   note={Modules and representations},
   language={Russian},
   journal={Zap. Nau\v cn. Sem. Leningrad. Otdel. Mat. Inst. Steklov.
   (LOMI)},
   volume={71},
   date={1977},
   pages={216--250, 287},
   review={\MR{0469914}},
}

\bib{MR1175793}{article}{
   author={Tavgen\cprime, O. I.},
   title={Bounded generation of normal and twisted Chevalley groups over the
   rings of $S$-integers},
   conference={
      title={Proceedings of the International Conference on Algebra, Part 1},
      address={Novosibirsk},
      date={1989},
   },
   book={
      series={Contemp. Math.},
      volume={131, Part 1},
      publisher={Amer. Math. Soc., Providence, RI},
   },
   isbn={0-8218-5136-5},
   date={1992},
   pages={409--421},
   review={\MR{1175793}},
}

\bib{MR0855453}{article}{
   author={Thurston, W.},
   author={Vaserstein, L.},
   title={On $K_1$-theory of the Euclidean space},
   journal={Topology Appl.},
   volume={23},
   date={1986},
   number={2},
   pages={145--148},
   issn={0166-8641},
   review={\MR{0855453}},
   doi={10.1016/0166-8641(86)90035-0},
}

\bib{MR1273527}{article}{
   author={Tolokonnikov, V.},
   title={Stable rank of $H^\infty$ in multiply connected domains},
   journal={Proc. Amer. Math. Soc.},
   volume={123},
   date={1995},
   number={10},
   pages={3151--3156},
   issn={0002-9939},
   review={\MR{1273527}},
   doi={10.2307/2160674},
}

\bib{MR1183608}{article}{
   author={Treil, S.},
   title={The stable rank of the algebra $H^\infty$ equals $1$},
   journal={J. Funct. Anal.},
   volume={109},
   date={1992},
   number={1},
   pages={130--154},
   issn={0022-1236},
   review={\MR{1183608}},
   doi={10.1016/0022-1236(92)90015-B},
}

\bib{MR0689383}{article}{
   author={van der Kallen, Wilberd},
   title={${\rm SL}\sb{3}({\bf C}[X])$\ does not have bounded word length},
   conference={
      title={Algebraic $K$-theory, Part I},
      address={Oberwolfach},
      date={1980},
   },
   book={
      series={Lecture Notes in Math.},
      volume={966},
      publisher={Springer, Berlin-New York},
   },
   isbn={3-540-11965-5},
   date={1982},
   pages={357--361},
   review={\MR{0689383}},
}

\bib{MR0284476}{article}{
   author={Vaser\v ste\u in, L. N.},
   title={The stable range of rings and the dimension of topological spaces},
   language={Russian},
   journal={Funkcional. Anal. i Prilo\v zen.},
   volume={5},
   date={1971},
   number={2},
   pages={17--27},
   issn={0374-1990},
   review={\MR{0284476}},
   doi={https://doi.org/10.1007/BF01076414},
}

\bib{MR0947649}{article}{
   author={Vaser\v ste\u in, L. N.},
   title={Reduction of a matrix depending on parameters to a diagonal form
   by addition operations},
   journal={Proc. Amer. Math. Soc.},
   volume={103},
   date={1988},
   number={3},
   pages={741--746},
   issn={0002-9939},
   review={\MR{0947649}},
   doi={10.2307/2046844},
}

\bib{MR2822515}{article}{
   author={Vavilov, N. A.},
   author={Smolenski\u i, A. V.},
   author={Sury, B.},
   title={Unitriangular factorizations of Chevalley groups},
   language={Russian, with English and Russian summaries},
   journal={Zap. Nauchn. Sem. S.-Peterburg. Otdel. Mat. Inst. Steklov.
   (POMI)},
   volume={388},
   date={2011},
   pages={17--47, 309--310},
   issn={0373-2703},
   translation={
      journal={J. Math. Sci. (N.Y.)},
      volume={183},
      date={2012},
      number={5},
      pages={584--599},
      issn={1072-3374},
   },
   review={\MR{2822515}},
   doi={10.1007/s10958-012-0826-z},
}

\end{biblist}
\end{bibdiv}
\end{document}